\def\BibTeX{{\rm B\kern-.05em{\sc i\kern-.025em b}\kern-.08em
    T\kern-.1667em\lower.7ex\hbox{E}\kern-.125emX}}
\newcolumntype{P}[1]{>{\centering\arraybackslash}p{#1}}
\definecolor{Mgreen}{RGB}{80, 175, 80}
\definecolor{Mblue}{RGB}{80, 80, 200}
\definecolor{Mred}{RGB}{220, 50, 50}
\def \A {\mathbf{A}}
\def \B {\mathbf{B}}
\def \D {\mathbf{D}}
\def \e {\mathbf{e}}
\def \G {\mathbf{G}}
\def \H {\mathbf{H}}
\def \I {\mathbf{I}}
\def \J {\mathbf{J}}
\def \K {\mathbf{K}}
\def \Q {\mathbf{Q}}
\def \R {\mathbf{R}}
\def \s {\mathbf{s}}
\def \S {\mathbf{S}}
\def \U {\mathbf{U}}
\def \u {\mathbf{u}}
\def \W {\mathbf{W}}
\def \w {\mathbf{w}}
\def \x {\mathbf{x}}
\def \X {\mathbf{X}}
\def \Y {\mathbf{Y}}
\def \y {\mathbf{y}}
\def \z {\mathbf{z}}
\def \Acal {\mathcal{A}}
\def \Ccal {\mathcal{C}}
\def \Dcal {\mathcal{D}}
\def \Hcal {\mathcal{H}}
\def \Kcal {\mathcal{K}}
\def \Ncal {\mathcal{N}}
\def \Ocal {\mathcal{O}}
\def \Zcal {\mathcal{Z}}
\def \Cbb {\mathbb{C}}
\def \Ebb {\mathbb{E}}
\def \Nbb {\mathbb{N}}
\def \Pbb {\mathbb{P}}
\def \Rbb {\mathbb{R}}
\def \Vbb {\mathbb{V}}
\def \Zbb {\mathbb{Z}}
\def \drm {\mathrm{d}}
\def \irm {\mathrm{i}}
\def \Prm {\mathrm{P}}
\def \betabs {\boldsymbol{\beta}}
\def \gammabs {\boldsymbol{\gamma}}
\def \mubs {\boldsymbol{\mu}}
\def \zetabs {\boldsymbol{\zeta}}
\def \Deltabs {\boldsymbol{\Delta}}
\def \Gammabs {\boldsymbol{\Gamma}}
\def \Omegabs {\boldsymbol{\Omega}}
\def \Thetabs {\boldsymbol{\Theta}}
\def \Upsilonbs {\boldsymbol{\Upsilon}}
\def \Xibs {\boldsymbol{\Xi}}
\def \det {\mathrm{det}}
\def \Tr {\mathrm{tr}}
\def \tr {\mathrm{tr}}
\def \diag{\mathrm{diag}}
\def \bdiag{\mathrm{bdiag}}
\DeclareMathOperator{\supp}{supp}
\newtheorem{assumption}{Assumption}
\newtheorem{corollary}{Corollary}
\newtheorem{theorem}{Theorem}
\newtheorem{proposition}{Proposition}
\newtheorem{lemma}{Lemma}
\newtheorem{remark}{Remark}
\renewcommand{\Im}{\mathrm{Im}}
\renewcommand{\Re}{\mathrm{Re}}
\begin{document}

%%%%%%%%%%%%%%%%%%%%%%%%%%%%%%%%%%%%%%%%%%%%%%%%%%%%%%%%%%%%%%%%%%%%%%%%%%%%%%%%%%%%%%%%%%%%
%%%%%%%%%%%%%%%%%%%%%%%%%%%%%%%%%%%%%%%%%%%%%%%%%%%%%%%%%%%%%%%%%%%%%%%%%%%%%%%%%%%%%%%%%%%%
%%%%%%%%%%%%%%%%%%%%%%%%%%%%%%%%%%%%%%%%%%%%%%%%%%%%%%%%%%%%%%%%%%%%%%%%%%%%%%%%%%%%%%%%%%%%
%%%%%%%%%%%%%%%%%%%%%%%%%%%%%%%%%%%%%%%%%%%%%%%%%%%%%%%%%%%%%%%%%%%%%%%%%%%%%%%%%%%%%%%%%%%%
%%%%%%%%%%%%%%%%%%%%%%%%%%%%%%%%%%%%%%%%%%%%%%%%%%%%%%%%%%%%%%%%%%%%%%%%%%%%%%%%%%%%%%%%%%%%
%%%%%%%%%%%%%%%%%%%%%%%%%%%%%%%%%%%%%%%%%%%%%%%%%%%%%%%%%%%%%%%%%%%%%%%%%%%%%%%%%%%%%%%%%%%%
%\title{A New Test Statistic for Change Detection in the Covariance Structure of High-Dimensional Gaussian Low-Rank Models}
\title{A New Statistic for Testing Covariance Equality in High-Dimensional Gaussian Low-Rank Models}
\author
{
    R. Beisson, \IEEEmembership{Student Member, IEEE}, 
    P. Vallet, \IEEEmembership{Member, IEEE}, 
    A. Giremus, \IEEEmembership{Member, IEEE}, 
    G. Ginolhac, \IEEEmembership{Member, IEEE}
    \thanks{R. Beisson, P. Vallet and A. Giremus are with Laboratoire de l'Int{\'e}gration du Mat{\'e}riau au Syst{\`e}me (CNRS, Univ. Bordeaux, Bordeaux INP), 351 Cours de la Libération, 33400 Talence (France)}
    \thanks{G. Ginolhac is with Laboratoire d'Informatique, Syst{\`e}mes, Traitement de l'Information et de la Connaissance (Univ. Savoie/Mont-Blanc, Polytech Annecy),  5 chemin de Bellevue, 74940 Annecy (France)}
    \thanks{This work was partially supported by Agence de l'Innovation de D{\'e}fense and R{\'e}gion Nouvelle-Aquitaine. The material of this paper was partly presented in the conference paper  \cite{Beisson2021SSP}.}
}

\maketitle

\begin{abstract}
In this paper, we consider the problem of testing equality of the covariance matrices of $L$ complex Gaussian multivariate time series of dimension $M$. 
We study the special case where each of the $L$ covariance matrices is modeled as a rank $K$ perturbation of the identity matrix, corresponding to a signal plus noise model. 
A new test statistic based on the estimates of the eigenvalues of the different covariance matrices is proposed. In particular, we show that this statistic is consistent and with controlled type I error in the high-dimensional asymptotic regime where the sample sizes $N_1, \dots , N_L$ of each time series and the dimension $M$ both converge to infinity at the same rate, while $K$ and $L$ are kept fixed. 
We also provide some simulations on synthetic and real data (SAR images) which demonstrate significant improvements over some classical methods such as the GLRT, or other alternative methods relevant for the high-dimensional regime and the low-rank model.
\end{abstract}

\section{Introduction}
\label{section:Introduction}

\IEEEPARstart{D}{etecting} changes in the behaviour of multivariate time series is a fundamental problem in many applications going from remote sensing \cite{Conradsen2003,Ciuonzo2017,Mian2018,Mian2020} and wireless communications \cite{Liu2021}  to finance \cite{Galeano2007}, climatology \cite{Ribes2009} or genomics \cite{Zhou2019}. In several of those applications, a usual approach consists in modeling the changes using the distribution of the time series, and in particular through an evolution in the structure of the covariance matrix.

\par Consider the context of $M$-dimensional time series $\left(\mathbf{y}_{n,1}\right)_{n \in \mathbb{Z}}, \ldots, \left(\mathbf{y}_{n,L}\right)_{n \in \mathbb{Z}}$, assumed mutually independent and such that for all $\ell \in \{1, \ldots, L\}$,
\begin{equation}
    \left(\y_{n,\ell}\right)_{n \in \Zbb} \stackrel{\text{i.i.d.}}{\sim} \Ncal_{\Cbb^M}\left(\mathbf{0},\R_{\ell}\right),
\end{equation}
where $\Ncal_{\Cbb^M}\left(\mathbf{0},\R_{\ell}\right)$ denotes the zero-mean complex normal distribution with covariance matrix $\R_{\ell}$. Detecting the changes in the distribution of $\left(\mathbf{y}_{n,\ell}\right)_{n \in \mathbb{Z}}$, for all $\ell \in \{1, \ldots, L\}$, can be formalized as the following binary hypothesis test dealing with the equality of the $L$ covariance matrices $\R_1,\ldots,\R_L$,
\begin{align}
    \label{eq:General_Test}
    \begin{split}
      \mathcal{H}_0 :& \quad \mathbf{R}_1 = \ldots = \mathbf{R}_L \\
      \mathcal{H}_1 :& \quad \exists (i,j) \in \{1, \dots, L\}^2: \mathbf{R}_i \neq \mathbf{R}_{j}
    \end{split}.
\end{align}
Assume that for all $\ell \in \{1, \ldots, L\}$, $N_\ell$ observations $\mathbf{y}_{1,\ell}, \dots, \mathbf{y}_{N_\ell,\ell}$ are available and let $N = N_1 + \dots + N_L$. A large class of test statistics widely encountered in the literature \cite{Ciuonzo2017} involves, provided that $M < N_1,\ldots, N_L$, linear spectral statistics of the matrices $\hat{\R}_{\ell}^{-1}\hat{\R}$ of the form:
\begin{equation}
    \label{eq:Linear_Spectral_Statistic_F_Matrix}
    S =  \sum_{\ell=1}^L \frac{N_{\ell}}{N} \frac{1}{M}\sum_{k=1}^M  \varphi\left(\lambda_k(\hat{\R}_{\ell}^{-1}\hat{\R})\right),
\end{equation}
where $\lambda_k(\hat{\R}_{\ell}^{-1}\hat{\R})$, for all $k \in \{1,\ldots,M\}$, are the eigenvalues of the matrix $\hat{\R}_{\ell}^{-1}\hat{\R}$ with
\begin{equation}
  \hat{\mathbf{R}}_{\ell} \vcentcolon= \frac{1}{N_{\ell}} \sum_{n=1}^{N_{\ell}} \mathbf{y}_{n,\ell} \mathbf{y}_{n,\ell}^*,
\end{equation}
denoting the sample covariance matrix (SCM) associated with $\mathbf{R}_\ell$ and
\begin{equation}
    \hat{\mathbf{R}} = \sum_{\ell=1}^L \frac{N_{\ell}}{N} \hat{\mathbf{R}}_{\ell}.
\end{equation}
In \eqref{eq:Linear_Spectral_Statistic_F_Matrix}, $\varphi$ denotes some continuous function defined on $\left(0 , +\infty \right)$. In particular, the Generalized Likelihood Ratio (GLR) \cite{Ciuonzo2017} with $\varphi(x) = \log(x)$ or the \textit{Nagao} statistic with $\varphi(x) = (x - 1)^2$ are included in the class of statistics \eqref{eq:Linear_Spectral_Statistic_F_Matrix}. The presence of a change in the covariance is decided by comparing \eqref{eq:Linear_Spectral_Statistic_F_Matrix} to a threshold $\epsilon$ chosen to guarantee a certain type I error and the null hypothesis $\mathcal{H}_0$ is rejected if $S>\epsilon$. Moreover, the test statistics based on \eqref{eq:Linear_Spectral_Statistic_F_Matrix} have the key property that the distribution of $S$ under $\mathcal{H}_0$ is independent of $\mathbf{R}_1 = \ldots = \mathbf{R}_L$, which allows to control its type I error. 

However, in practice, the distribution of statistics of type \eqref{eq:Linear_Spectral_Statistic_F_Matrix} under $\Hcal_0$ is untractable and only known in a few special cases for finite $M,N_1,\ldots,N_L$ (e.g. for the GLR, see \cite{gupta1984distribution}). To circumvent this issue, approximations in the \textit{low-dimensional} (or \textit{large sample size}) regime in which $N_1,\ldots,N_L \to \infty$ while $M,L$ are fixed can be derived, see e.g. \cite[Th. 10.8.4]{Muirhead1982}. While the latter are meant to be used in practical scenarios where $N_1,\ldots,N_L \gg M$, they may not be reliable in contexts involving high-dimensional (large $M$) observations or moderate sample sizes $N_1,\ldots,N_L$. Indeed, in that high-dimensional case, it is often more reasonable to assume that $M, N_1,\ldots,N_L$ are of the same order of magnitude in which case the predictions of the distribution of \eqref{eq:Linear_Spectral_Statistic_F_Matrix} under $\Hcal_0$ in the \textit{low-dimensional} regime become irrelevant.

The context where $M,N_1,\ldots,N_L$ are of the same order of magnitude can be modeled more realistically by the \textit{high-dimensional regime} in which it is assumed that $M$ converges to infinity together with $N_1,\ldots,N_L$ such that $\frac{M}{N_{\ell}} \to c_{\ell} > 0$, while $L$ is kept fixed. In this non-standard regime, the asymptotic distribution of the statistic $S$ can be derived using random matrix theory techniques (see e.g. \cite{Zheng2012} for the case $L=2$).  

Moreover, in several applications involving high-dimensional observations, the potential changes in the covariance $\mathbf{R}_\ell$ may only be carried by a low-rank component (see e.g. \cite{Johnstone2001, combernoux2018performance,Vallet2015}). This is the case, e.g., in array processing when dealing with a large array of $M$ sensors and a small number $K$ of source signals compared to $M$ \cite{Vallet2015}.

In that case, we have the model
\begin{equation}
    \label{eq:Low_Rank_Model}
    \mathbf{R}_{\ell} = \boldsymbol{\Gamma}_{\ell} + \sigma^2 \mathbf{I},
\end{equation}
with $\boldsymbol{\Gamma}_{\ell}$ the covariance matrix of rank $K<M$ of a useful signal and $\sigma^2\mathbf{I}$ the covariance matrix of a spatially white additive noise.
When the rank $K$ remains constant in the \textit{high-dimensional regime}, the matrices $\R_{\ell}^{-1}\R_{\ell'}$ are fixed rank perturbations of the identity. 
Using well-known results \cite{Wachter1980, Wang2017} on the asymptotic spectral distribution of the Fisher type random matrices $\hat{\R}_{\ell}^{-1}\hat{\R}$, one can show under both $\Hcal_0$ and $\Hcal_1$ that
\begin{align}
  S \xrightarrow[]{} \sum_{\ell=1}^L \frac{c}{c_{\ell}} \int_{x_{\ell}^-}^{x_{\ell}^+} \varphi\left(\frac{c}{c_{\ell}}(1+x)\right) f_{\ell}(x) \drm x,
\end{align}
almost surely (a.s.) where $c = (c_1^{-1} + \ldots + c_L^{-1})^{-1}$ and where $f_{\ell}$ is the so-called \textit{Wachter} distribution given by
\begin{align}
  f_{\ell}(x) = \left(\frac{1}{c_{\ell}} -1\right) \frac{\sqrt{(x-x_{\ell}^-) (x_{\ell}^+ - x)}}{2 \pi x (1+x)} \mathbb{1}_{[x_{\ell}^-,x_{\ell}^+]}(x),
\end{align}
with $x_{\ell}^\pm = \frac{c_\ell - c}{c (1-c_{\ell})^2} \left(1 \pm \sqrt{c_{\ell} + \frac{c c_{\ell}}{c_{\ell} - c} - \frac{c c_{\ell}^2}{c_{\ell} - c}}\right)^2$.
Thus $S$ converges to the same limit under both hypotheses $\Hcal_0$ and $\Hcal_1$,  which indicates that test statistics relying on \eqref{eq:Linear_Spectral_Statistic_F_Matrix} might not be relevant in the \textit{high-dimensional regime} and for the low-rank model in \eqref{eq:Low_Rank_Model}.

So far from our knowledge, the problem of covariance equality testing under low-rank models has not received much attention in the literature. The work of \cite{Abdallah2019} considers the GLRT, under a low-rank Gaussian model, for a covariance equality test with a different alternative hypothesis $\Hcal'_1: \R_1 \neq \R_2 = \ldots = \R_L$. An extension to the specific case of subspace equality test has also been proposed by the same authors in \cite{abdallah2019signal}.

Under the model \eqref{eq:Low_Rank_Model}, the information about a potential change is contained in the $K$ largest eigenvalues and associated eigenvectors of $\R_{\ell}$. Therefore, classical results on the \emph{spiked models} for random matrices of the Fisher type \cite{Wang2017} can be exploited to characterize the asymptotic behaviour of the extreme eigenvalues of $(\hat{\R}_{\ell}^{-1}\hat{\R})_{\ell=1,\ldots,L}$, from which information about a potential change can be extracted. 
In the same way, the asymptotic behaviour of the largest eigenvalues of the spiked Wishart-type matrices \cite{Benaych-Georges2011} $\hat{\R}, (\hat{\R}_{\ell})_{\ell=1,\ldots,L}$ convey information about changes in the true covariances $\R_1,\ldots,\R_L$ \cite{Beisson2021SSP}, which can also be exploited to build test statistics relevant for the low-rank model in the high-dimensional regime. This latter option is the path followed in this paper.

\par \textit{Contributions.}
In this paper, we derive a new test statistic, no longer based on the family of statistics $S$ studied in \cite{Ciuonzo2017}, but which relies on the $K$ largest eigenvalues of the matrices $\hat{\mathbf{R}}_1, \dots, \hat{\mathbf{R}}_L, \hat{\mathbf{R}}$. More precisely, the test statistic compares in a certain sense estimates of the eigenvalues of the matrices $\Gammabs_1,\ldots,\Gammabs_L$ with estimates of the eigenvalues of the mixture $\Gammabs = \sum_{\ell=1}^L \frac{N_{\ell}}{N} \Gammabs_{\ell}$. We show that the proposed test statistic is consistent under the high-dimensional regime and the low-rank model \eqref{eq:Low_Rank_Model}, and with a controlled asymptotic type I error. To that purpose, the results of \cite{Benaych-Georges2011a}, which provides the asymptotic distribution of the $K$ largest eigenvalues of $\hat{\R}_{\ell}$ for a fixed $\ell$, are extended to provide the joint asymptotic distribution of the $K$ largest eigenvalues of the matrices $\hat{\R},\hat{\R}_1,\ldots,\hat{\R}_L$. The proposed test statistic is then compared to various alternatives, including the GLRT for the low-rank model \eqref{eq:Low_Rank_Model} as well as a statistic built from the results of \cite{Wang2017} on the extreme eigenvalues of the spiked Fisher matrices  $(\hat{\R}_{\ell}^{-1}\hat{\R})_{\ell=1,\ldots,L}$. We also provide an empirical study of the proposed test statistic on Synthetic Aperture Radar (SAR) images for detecting changes between two scenes.

\par\textit{Organization.} The paper is organized as follows. In Section \ref{section:Spectrum_of_hat_R}, we study an extension of the results of \cite{Benaych-Georges2011a} on the asymptotic distribution of the largest eigenvalues of $\hat{\mathbf{R}}_1, \dots, \hat{\mathbf{R}}_L, \hat{\mathbf{R}}$. In Section \ref{section:Test_Statistic}, we exploit the results derived in the previous section to build a new test statistic, for which we study its performance in the \textit{high-dimensional regime}. Sections \ref{section:alt_methods} and \ref{section:Simulations} are dedicated to compare, both theoretically and numerically, our proposed test statistic with alternative approaches. Simulations on synthetic data and on real data (SAR images) are provided.

\par \textit{Notations.} For $a\in \mathbb{R}$, $a^+$ denotes the positive part. Vectors and matrices are denoted with boldface lower case and upper case letters respectively. For a complex matrix $\A$, we denote by $\A^T$ and $\textbf{A}^*$ its transpose and conjugate transpose. If $\A$ is a $n \times n$ complex matrix, $\Tr(\A)$ denotes its trace and $\lambda_1(\A),\ldots,\lambda_n(\A)$ denote its eigenvalues. If $\A$ is Hermitian, the eigenvalues are considered in decreasing order $\lambda_1(\A) \geq \ldots \geq \lambda_n(\A)$. For matrices $\A_1,\ldots,\A_n$, $\bdiag(\A_1,\ldots,\A_n)$ denotes the the block diagonal matrix formed by $\A_1,\ldots,\A_n$. The complex circular Gaussian distribution on $\Cbb^n$ with covariance matrix $\R$ is denoted as $\Ncal_{\Cbb^n}(\mathbf{0},\R)$, while the real Gaussian distribution on $\Rbb^n$ with mean $\mubs$ and covariance matrix $\R$ is denoted as $\Ncal_{\Rbb^n}(\mubs,\R)$.

\section{Spectrum of \texorpdfstring{$\hat{\mathbf{R}}$}{R}}
\label{section:Spectrum_of_hat_R}

\par In this section, we study the asymptotic behavior at $1^{st}$ and $2^{nd}$ orders of the largest eigenvalues of $\hat{\mathbf{R}}$ when the matrices $\hat{\mathbf{R}}_1, \dots, \hat{\mathbf{R}}_L$ follow the low-rank model $\eqref{eq:Low_Rank_Model}$.
\par Consider the following two assumptions, which describe the \textit{high-dimensional regime} and specify the asymptotic behavior of the eigenvalues of $\boldsymbol{\Gamma}_1, \dots,\boldsymbol{\Gamma}_L$.
\begin{assumption}
    \label{assumption:Regime_Grandes_Dimensions}
    The sample sizes $N_1 = N_1(M), \ldots, N_L = N_L(M)$ are functions of $M$ such that
    \begin{equation}
        \frac{M}{N_{\ell}} = c_{\ell} + \textit{o}\left(\frac{1}{\sqrt{M}}\right),
    \end{equation}
    as $M \to \infty$, where $c_1, \dots, c_L$ $> 0$ and $K, L$ are independent of $M$.
\end{assumption}
In comparison with the classical \emph{low-dimensional regime} where $M$ is assumed fixed while $N_1,\ldots,N_L \to \infty$ \ (see e.g. \cite{Anderson1958, Muirhead1982}), the high-dimensional regime described in Assumption \ref{assumption:Regime_Grandes_Dimensions} models practical scenarios where the sample sizes $N_1,\ldots,N_L$ are of the same order of magnitude as the dimension $M$ and where $K$ is small compared to $M$. This regime has been widely used in the high-dimensional statistics literature (see e.g. \cite{johnstone_high_2006}), as well as in the signal processing applications (see e.g. \cite{Vallet2015, Mestre2017, couillet_robust_2015}).

In what follows the \textit{high-dimensional regime} described in Assumption \ref{assumption:Regime_Grandes_Dimensions} is represented by the notation $M \to \infty$. We also define 
\begin{align}
   c \vcentcolon= \left(\sum_{\ell = 1}^L \frac{1}{c_\ell}\right)^{-1},
\end{align}
as well as
\begin{equation}
    \label{eq:Gamma}
    \boldsymbol{\Gamma} \vcentcolon= \sum_{\ell=1}^L \frac{N_{\ell}}{N} \boldsymbol{\Gamma}_{\ell}.
\end{equation}
One can notice that $\mathbf{\Gamma}$ is the the pooling of the low-rank covariance matrices $\boldsymbol{\Gamma}_1, \dots,\boldsymbol{\Gamma}_L$ and has rank at most $KL$.
In the following, we also need to ensure the convergence of the eigenvalues of matrices $\Gammabs_{\ell}, \Gammabs$ in the high-dimensional regime.
\begin{assumption}
    \label{assumption:Convergence_gamma}
    For all $k \in \{1,\ldots,K\}$, $\ell \in \{1, \dots, L\}$, 
    \begin{equation}
        \lambda_{k}(\boldsymbol{\Gamma}_{\ell}) = 
        \gamma_{k,\ell} + \textit{o}\left(\frac{1}{\sqrt{M}}\right),
    \end{equation}
    and for all $k \in \{1, \dots, KL\}$,
    \begin{equation}
        \lambda_{k}(\boldsymbol{\Gamma}) = \gamma_k + \textit{o}\left(\frac{1}{\sqrt{M}}\right).
    \end{equation}
\end{assumption}
We note that Assumption \ref{assumption:Convergence_gamma} is a purely technical assumption which is not restrictive in practice as the corresponding results derived from it are meant to be used for fixed values of $M,N,K$.

Under Assumptions \ref{assumption:Regime_Grandes_Dimensions} and \ref{assumption:Convergence_gamma}, the global behaviour of the eigenvalues of $\hat{\R}$ can be described through its empirical spectral distribution defined as the random probability measure
\begin{equation}
    \hat{\mu} = \frac{1}{M} \sum_{k=1}^M \delta_{\lambda_k\left(\hat{\R}\right)},
\end{equation}
where $\delta_x$ is the Dirac measure centered at $x$. Under the model \eqref{eq:Low_Rank_Model}, each covariance matrix $\mathbf{R}_1, \dots, \mathbf{R}_L$ is a fixed rank $K$ perturbation of the matrix $\sigma^2 \mathbf{I}$ and it can be shown using standard perturbations arguments that $\hat{\mu}$ asymptotically behaves as the Marcenko-Pastur distribution, i.e. $\hat{\mu}$ converges weakly almost surely \textit{(a.s.)} to the probability measure:
\begin{align}
    &\mu(\drm x) = \frac{\sqrt{(x-x^-)(x^+-x)}}{2\pi\sigma^2 c x} \mathbb{1}_{[x^-,x^+]}(x) \drm x \notag \\
    &\qquad \quad + \left(1-\frac{1}{c}\right)^+ \delta_0(\drm x),
    \label{eq:Marchenko_Pastur_distribution}
\end{align}
where $x^\pm = \sigma^2(1\pm\sqrt{c})^2$. Consequently, any functional of the type
\begin{align}
    \hat{\mu}(\varphi) \vcentcolon= \frac{1}{M} \sum_{k=1}^M \varphi(\lambda_k(\hat{\R})),
    \label{eq:lss_scm}
\end{align}
where $\varphi$ is a bounded continuous function, satisfies
\begin{equation}
    \hat{\mu}(\varphi) =
    \int_{\Rbb} \varphi(\lambda) \drm \hat{\mu}(\lambda)
    \xrightarrow[M\to\infty]{a.s.} \int_{\Rbb} \varphi(\lambda) \drm \mu(\lambda).
    \label{eq:Linear_Spectral_Statistic_Rhat}
\end{equation}
As the limit in \eqref{eq:Linear_Spectral_Statistic_Rhat} only depends on $\sigma^2$ and $c$, it is not possible to recover information on the low-rank matrices $\boldsymbol{\Gamma}_1, \dots,\boldsymbol{\Gamma}_L$ in the \textit{high-dimensional regime} from statistics of type \eqref{eq:lss_scm}. However, under the previous assumptions, it can be shown that the information related to the spectrum of $\boldsymbol{\Gamma}$ can be found in the largest $KL$ eigenvalues of $\hat{\mathbf{R}}$, thanks to the following result.
\begin{theorem}
    \label{theorem:Spike_model_Limits}
    Under Assumptions \ref{assumption:Regime_Grandes_Dimensions} and \ref{assumption:Convergence_gamma},  $\forall k \in \{1,\ldots,KL\}$,
    \begin{equation}
        \lambda_k\left(\hat{\mathbf{R}}\right) \xrightarrow[M\to\infty]{a.s.} \phi_c\left(\gamma_k,\sigma^2\right),
    \end{equation}
    with 
    \begin{equation}
        \phi_{c}(\gamma,\sigma^2) \vcentcolon=
        \begin{cases}
        \frac{(\gamma+\sigma^2)(\gamma+\sigma^2 c)}{\gamma} & \text{ if } \gamma > \sigma^2 \sqrt{c}
        \\
        \sigma^2 (1+\sqrt{c})^2 & \text{ if } \gamma \leq \sigma^2 \sqrt{c}
        \end{cases},
    \end{equation}
    Moreover, $\lambda_{KL+1}(\hat{\mathbf{R}}) \to \sigma^2 \left(1 + \sqrt{c}\right)^2$ $a.s.$ when $M \to \infty$.
\end{theorem}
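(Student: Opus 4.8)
The plan is to reduce this to a standard spiked Wishart-type result by recognizing $\hat{\R}$ as a low-rank perturbation of a "pure noise" sample covariance matrix. Write $\y_{n,\ell} = \x_{n,\ell} + \sigma \z_{n,\ell}$ where, conditionally on the signal subspace, $\sigma \z_{n,\ell} \sim \Ncal_{\Cbb^M}(\mathbf{0}, \sigma^2 \I)$ carries the noise and the signal part has rank $K$. Stacking all $N = N_1 + \dots + N_L$ observations, one may write $\hat{\R} = \frac{1}{N}\Y\Y^*$ with $\Y = [\y_{1,1},\dots,\y_{N_L,L}]$, and decompose $\Y = \X + \sigma \Z$. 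The noise-only matrix $\frac{\sigma^2}{N}\Z\Z^*$ is a classical Wishart matrix with ratio $M/N \to c$ (note $\sum_\ell N_\ell / N \cdot (M/N_\ell)^{-1}$ gives $c = (\sum_\ell c_\ell^{-1})^{-1}$), hence its empirical spectral distribution converges to the Marcenko-Pastur law in \eqref{eq:Marchenko_Pastur_distribution} and its top eigenvalue converges a.s. to $\sigma^2(1+\sqrt{c})^2$. The matrix $\X$ has rank at most $KL$ and, crucially, $\frac{1}{N}\X\X^*$ is a deterministic-equivalent of $\Gammabs$ in the sense that its nonzero eigenvalues concentrate around $\lambda_k(\Gammabs) = \gamma_k + o(1/\sqrt M)$ — this is where Assumption \ref{assumption:Convergence_gamma} enters.

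First I would make the "additive spiked model" structure precise: show $\hat{\R} = \frac{\sigma^2}{N}\Z\Z^* + \P$ where $\P$ is a perturbation of rank at most $KL$ whose nonzero eigenvalues converge to $\gamma_1,\dots,\gamma_{KL}$, plus cross-terms $\frac{\sigma}{N}(\X\Z^* + \Z\X^*)$ that must be controlled. The cleanest route is to invoke the framework of Benaych-Georges and Nadakuditi (referenced as \cite{Benaych-Georges2011} in the paper) on eigenvalues of low-rank perturbations of large random matrices: the limiting location of $\lambda_k(\hat{\R})$ is governed by the $D$-transform (equivalently, the Stieltjes transform) of the Marcenko-Pastur law evaluated at $\gamma_k$. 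Concretely, the top eigenvalues satisfy the fixed-point relation obtained by inverting $g(z) = \int (x - z)^{-1}\mu(\drm x)$ (and its companion); for the Marcenko-Pastur law with parameter $c$ and scale $\sigma^2$, a direct computation of this inverse yields exactly $\phi_c(\gamma,\sigma^2) = \frac{(\gamma+\sigma^2)(\gamma+\sigma^2 c)}{\gamma}$ when the perturbation strength $\gamma$ exceeds the phase-transition threshold $\sigma^2\sqrt c$, and the edge value $\sigma^2(1+\sqrt c)^2$ otherwise. The statement about $\lambda_{KL+1}(\hat{\R}) \to \sigma^2(1+\sqrt c)^2$ then follows from eigenvalue interlacing: a rank-$(KL)$ perturbation cannot move the $(KL+1)$-th eigenvalue away from the bulk edge, combined with the no-eigenvalue-outside-the-support property for the noise Wishart matrix.

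There is a subtlety I would not gloss over: $\hat{\R}$ is \emph{not} literally $\frac{1}{N}\Y\Y^*$ with i.i.d.\ isotropic-noise columns, because $\hat{\R} = \sum_\ell \frac{N_\ell}{N}\hat{\R}_\ell$ mixes blocks with \emph{different} sample sizes but — under the model \eqref{eq:Low_Rank_Model} — the \emph{same} noise variance $\sigma^2$. So after subtracting the rank-$\le KL$ signal contribution, the residual noise part is genuinely $\frac{\sigma^2}{N}\Z\Z^*$ with $\Z$ having i.i.d.\ $\Ncal_{\Cbb}(0,1)$ entries and aspect ratio $M/N\to c$; this is what makes the single-parameter Marcenko-Pastur law appear. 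I would verify this block-aggregation step carefully and then check that the $o(1/\sqrt M)$ slack in Assumptions \ref{assumption:Regime_Grandes_Dimensions}–\ref{assumption:Convergence_gamma} is negligible at first order (it only matters for the $2^{nd}$-order CLT studied later in the section).

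The main obstacle will be the control of the cross-terms $\frac{\sigma}{N}(\X\Z^* + \Z\X^*)$ and, relatedly, justifying that the top eigenvector of $\hat{\R}$ aligns (asymptotically) with the signal subspace so that the perturbation genuinely acts as a rank-$KL$ spike with eigenvalues $\gamma_k$ rather than some rotated/contaminated version. In the Benaych-Georges–Nadakuditi formalism this is handled by showing the perturbation is asymptotically free of (or at least appropriately independent from) the noise matrix, which here follows from the Gaussianity and independence of $\Z$ from $\X$; but making this rigorous in the block-structured setting, and in particular extracting the phase-transition threshold $\sigma^2\sqrt c$ cleanly, is the technical heart of the argument. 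Everything else — the explicit form of $\phi_c$, the bulk-edge limit, and the interlacing bound on $\lambda_{KL+1}$ — is then a routine computation with the Stieltjes transform of the Marcenko-Pastur law.
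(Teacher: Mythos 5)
Your plan follows the same underlying strategy as the paper: both view $\Y = \big[\y_{1,1},\ldots,\y_{N_L,L}\big]$ as a rectangular low-rank perturbation of a matrix with i.i.d.\ $\Ncal_{\Cbb}(0,\sigma^2)$ entries and aspect ratio $M/N \to c$, and both extract $\phi_c$ from the inversion of the (pair of) Stieltjes transforms of the Marcenko--Pastur law, i.e.\ the Benaych-Georges--Nadakuditi singular-value mechanism of \cite{Benaych-Georges2011a}. The difference is mostly executional: you would cite the existing singular-value theorem after conditioning on the (random) perturbation $\frac{1}{\sqrt{N}}\Omegabs\S^*$, whose nonzero singular values converge a.s.\ to $\sqrt{\gamma_1},\ldots,\sqrt{\gamma_{KL}}$ by the SLLN for $\frac{1}{N}\S^*\S$ and Assumption \ref{assumption:Convergence_gamma}, whereas the paper re-derives the result from scratch via linearization of $\hat{\R}$ into the Hermitian block matrix $\check{\Y}$, a determinant factorization, uniform convergence of the small $2KL\times 2KL$ companion matrix $\hat\Xibs(z)$ (Proposition \ref{proposition:quad_form}), and Rouch\'e's theorem --- a choice that handles the block structure of $\S$ directly and sets up the second-order analysis of Theorem \ref{theorem:CLT_lambda_R_Hat}. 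Both routes are valid, but two points in your write-up need tightening. First, framing this as an ``additive spiked model'' $\hat{\R}=\frac{\sigma^2}{N}\Z\Z^*+\P$ with $\P$ of rank $KL$ is misleading: the Hermitian perturbation (signal plus both cross terms) has rank up to $3KL$, and the cross terms are $O(1)$ contributions that shape $\phi_c$ rather than error terms to be bounded away; the genuinely rank-$KL$ object is the rectangular $\X = \Omegabs\S^*$ inside $\Y=\X+\sigma\Z$, and the $D$-transform calculus of \cite{Benaych-Georges2011a} must be done on singular values. Second, the interlacing bound giving $\lambda_{KL+1}(\hat{\R})\to \sigma^2(1+\sqrt{c})^2$ has to be applied to the singular values of $\Y$ (a rank-$KL$ rectangular perturbation shifts at most $KL$ of them), not to the eigenvalues of the Hermitian decomposition, which would only control $\lambda_{3KL+1}(\hat{\R})$; this is exactly the Weyl argument on singular values used in the paper.
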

\begin{proof}
    The proof of Theorem \ref{theorem:Spike_model_Limits} is deferred to Appendix \ref{section:Proof_Theorem_Spike_Model_Limits}. 
    % \textit{Theorem \ref{theorem:Spike_model_Limits} proof, detailed in Appendix \ref{section:Proof_Theorem_Spike_Model_Limits}, uses the techniques presented in \cite{Benaych-Georges2011a} applied for \textit{Wishart}-type random matrix models.}
\end{proof}
The matrix $\hat{\R}$ being a mixture of $L$ independent but not identically distributed Wishart matrices, we note that Theorem \ref{theorem:Spike_model_Limits} provides an extension of the results of \cite[Th. 2.7]{Benaych-Georges2011a} (see also \cite{Baik2005}) to the case $L > 1$.
It shows in particular that the largest eigenvalues of $\hat{\mathbf{R}}$ converge to some limits depending directly of the eigenvalues of $\mathbf{\Gamma}$, provided that for all $k \in \{1, \ldots, KL\}$ the ratios $\frac{\gamma_{k}}{\sigma^2}$ are above $\sqrt{c}$. The threshold $\sqrt{c}$ can be interpreted as a minimal SNR above which the $k^{th}$ largest \textit{signal related} eigenvalues of $\hat{\mathbf{R}}$ splits from the largest \textit{noise related} eigenvalue $\lambda_{KL+1}(\hat{\mathbf{R}})$.

The next result shows, under hypothesis $\mathcal{H}_0$, a joint \textit{Central Limit Theorem} (CLT) on the largest eigenvalues of $\hat{\mathbf{R}}_1, \dots, \hat{\mathbf{R}}_L, \hat{\mathbf{R}}$. 
\begin{theorem}
    \label{theorem:CLT_lambda_R_Hat}
    Let Assumptions \ref{assumption:Regime_Grandes_Dimensions}-\ref{assumption:Convergence_gamma}
    hold. Assume moreover that $\Gammabs_1 = \ldots = \Gammabs_L$ (thus $\gamma_{k,\ell} = \gamma_k$) and that
    \begin{align}
        \gamma_{1} > \ldots > \gamma_K > \sigma^2 \max\{\sqrt{c},\sqrt{c}_1,\ldots,\sqrt{c}_L\}.
    \end{align}
    Then we have
    \begin{align}
        \sqrt{M}&
        \begin{pmatrix}
            \lambda_k\left(\hat{\mathbf{R}}\right) - \phi_{c}(\gamma_k,\sigma^2)
            \\
            \left(\lambda_k\left(\hat{\mathbf{R}}_\ell\right) - \phi_{c_{\ell}}(\gamma_{k},\sigma^2)\right)_{\ell=1,\ldots,L}
        \end{pmatrix}_{\substack{k = 1, \dots, K}}
        \notag\\ 
        & \quad \xrightarrow[M\to\infty]{\mathcal{D}} \mathcal{N}_{\mathbb{R}^{K(L+1)}}\left(\mathbf{0},\mathbf{\Theta}\right),
    \end{align}
    where $\Thetabs$ is a positive definite block diagonal matrix given by $\boldsymbol{\Theta} = \mathrm{bdiag}\left(\mathbf{\Theta}_1, \dots, \mathbf{\Theta}_K\right)$ with
    \begin{equation}
        \boldsymbol{\Theta}_k \vcentcolon=
        \begin{pmatrix}
            \theta_{k, 0}^2 & \vartheta_{k, 1} & \ldots & \vartheta_{k, L}
            \\
            \vartheta_{k, 1} & \ddots & (0)
            \\
            \vdots & (0) & \ddots
            \\
            \vartheta_{k, L} & & & \theta_{k, L}^2
        \end{pmatrix},
    \end{equation}
    and by denoting $c_0 = c$,
    \begin{equation}
        \theta_{k, \ell}^2 = c_{\ell} \frac{(\gamma_k^2-\sigma^4 c_{\ell})(\gamma_k+\sigma^2)^2}{\gamma_k^2}, \quad \ell \geq 0,
    \end{equation}
    \begin{equation}
        \vartheta_{k, \ell} = c_{0} \frac{(\gamma_k^2-\sigma^4 c_{\ell})(\gamma_k+\sigma^2)^2}{\gamma_k^2}, \quad \ell \geq 1.
    \end{equation}
\end{theorem}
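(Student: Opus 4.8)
The plan is to follow and extend the strategy of \cite{Benaych-Georges2011a}: reduce everything to bilinear forms of resolvents of \emph{white} Wishart matrices, and promote the marginal analysis carried out there for a single $\hat{\R}_\ell$ to a joint analysis over $\hat{\R},\hat{\R}_1,\ldots,\hat{\R}_L$. First I would exploit the invariance afforded by $\Hcal_0$: under $\Hcal_0$ we have $\R_1=\cdots=\R_L=:\R=\sigma^2\I+\U\D\U^*$ with $\U\in\Cbb^{M\times K}$ having orthonormal columns $\u_1,\ldots,\u_K$ and $\D=\diag(\gamma_1,\ldots,\gamma_K)$. Writing $\y_{n,\ell}=\R^{1/2}\x_{n,\ell}$ with the $\x_{n,\ell}$ i.i.d.\ $\Ncal_{\Cbb^M}(\mathbf{0},\I)$, one obtains $\hat{\R}_\ell=\R^{1/2}\hat{\W}_\ell\R^{1/2}$ and $\hat{\R}=\R^{1/2}\hat{\W}\R^{1/2}$, where $\hat{\W}_\ell=N_\ell^{-1}\X_\ell\X_\ell^*$ and $\hat{\W}=N^{-1}\X\X^*$ with $\X=[\X_1\,|\,\cdots\,|\,\X_L]$. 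Hence $\hat{\W}_1,\ldots,\hat{\W}_L$ are \emph{independent} white Wishart matrices of aspect ratios $c_1,\ldots,c_L$, whereas $\hat{\W}$ is itself a white Wishart matrix of aspect ratio $c$ which shares with $\hat{\W}_\ell$ exactly the block $\X_\ell$ and is independent of $\hat{\W}_\ell$ otherwise; this shared block is the only source of correlation between $\hat{\R}$ and $\hat{\R}_\ell$, and it will generate the off-diagonal terms $\vartheta_{k,\ell}$, whereas the mutual independence of the $\hat{\W}_\ell$ will force the vanishing of the covariances among the indices $\ell\ge 1$, i.e.\ the arrowhead shape of $\Thetabs_k$.

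Second, with the conventions $\hat{\W}_0:=\hat{\W}$ and $c_0:=c$, for each $\ell\in\{0,\ldots,L\}$ and $k\in\{1,\ldots,K\}$ I would localise $\lambda_k(\hat{\R}_\ell)$ strictly outside $[\sigma^2(1-\sqrt{c_\ell})^2,\sigma^2(1+\sqrt{c_\ell})^2]$ as a simple eigenvalue --- legitimate a.s.\ by Theorem \ref{theorem:Spike_model_Limits}, the strict ordering $\gamma_1>\cdots>\gamma_K$, the separation hypothesis $\gamma_K>\sigma^2\max\{\sqrt{c},\sqrt{c_1},\ldots,\sqrt{c_L}\}$, and standard no-eigenvalue-outside-the-support estimates for white Wishart matrices. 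Using the rank-$K$ structure and a Schur-complement identity, I would then recast $\det(\hat{\R}_\ell-\lambda\I)=0$, for $\lambda$ near $\phi_{c_\ell}(\gamma_k,\sigma^2)$, as the $K\times K$ secular equation $\det\!\big(\I_K+\lambda\,\B\,\Q_\ell(\lambda/\sigma^2)\big)=0$, where $\B=\diag\!\big(\tfrac{\gamma_j}{\sigma^2(\gamma_j+\sigma^2)}\big)_{j=1}^{K}$ and $\Q_\ell(z):=\U^*(\hat{\W}_\ell-z\I)^{-1}\U$. Since $\Q_\ell(z)\to m_{c_\ell}(z)\I_K$ a.s., with $m_{c_\ell}$ the Stieltjes transform of the Marcenko-Pastur law of ratio $c_\ell$, the limiting secular equation splits into the scalar equations $1+\tfrac{\lambda\gamma_k}{\sigma^2(\gamma_k+\sigma^2)}\,m_{c_\ell}(\lambda/\sigma^2)=0$, whose relevant root is $\phi_{c_\ell}(\gamma_k,\sigma^2)$ --- this recovers the first-order behaviour. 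Expanding the secular determinant around this root yields, up to $o_{\Pbb}(1)$, that $\sqrt{M}\big(\lambda_k(\hat{\R}_\ell)-\phi_{c_\ell}(\gamma_k,\sigma^2)\big)$ equals a fixed nonzero constant (the reciprocal of the derivative, at the root, of the $k$-th scalar secular function) times $\sqrt{M}\big(\u_k^*(\hat{\W}_\ell-z_{k,\ell}\I)^{-1}\u_k-m_{c_\ell}(z_{k,\ell})\big)$, with $z_{k,\ell}:=\phi_{c_\ell}(\gamma_k,\sigma^2)/\sigma^2$; the off-diagonal entries of $\Q_\ell$ contribute only at order $o_{\Pbb}(1/\sqrt{M})$, since they are bilinear forms $\u_j^*(\cdot)\u_k$ with $\u_j\perp\u_k$.

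Third, the problem is thereby reduced to the joint CLT of the family $\big(\sqrt{M}\,\big(\u_k^*(\hat{\W}_\ell-z_{k,\ell}\I)^{-1}\u_k-m_{c_\ell}(z_{k,\ell})\big)\big)_{1\le k\le K,\ 0\le\ell\le L}$. I would establish this from the same martingale-difference decomposition along the columns of $\X$ as in \cite[Th.~2.7]{Benaych-Georges2011a}, run simultaneously for all $\ell$, supplemented by a direct Gaussian computation of the limiting covariance. That computation gives: (i) block-diagonality in $k$, because the cross-terms between the directions $\u_k$ and $\u_j$ ($j\ne k$) carry the factor $|\u_j^*\u_k|^2=0$, whence $\Thetabs=\bdiag(\Thetabs_1,\ldots,\Thetabs_K)$; (ii) zero covariance between distinct indices $\ell,\ell'\in\{1,\ldots,L\}$, by independence of $\hat{\W}_\ell$ and $\hat{\W}_{\ell'}$; (iii) a nonzero covariance between $\ell=0$ and any $\ell\ge 1$, stemming from the common block $\X_\ell$ in $\hat{\W}=\tfrac{N_\ell}{N}\hat{\W}_\ell+\tfrac1N\sum_{j\ne\ell}\X_j\X_j^*$, with the weight $N_\ell/N\to c/c_\ell$ entering the expression. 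Propagating (i)--(iii) through the proportionality constants of the second step --- which introduce the $c_\ell$ prefactors and the $\gamma_k$-dependent rational functions --- yields precisely $\theta_{k,\ell}^2$ and $\vartheta_{k,\ell}$. Finally, positive definiteness of each $\Thetabs_k$ follows from its explicit arrowhead form: the diagonal entries are positive because $\gamma_k^2>\sigma^4 c_\ell$ for all $\ell\ge 0$, and the Schur complement of $\theta_{k,0}^2$ with respect to $\diag(\theta_{k,1}^2,\ldots,\theta_{k,L}^2)$ equals $\tfrac{(\gamma_k+\sigma^2)^2}{\gamma_k^2}\,c^2\sigma^4(L-1)$, which is strictly positive for $L\ge 2$.

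I expect the main obstacle to be the joint bilinear-form CLT of the third step, and in particular the cross-covariances involving the pooled matrix $\hat{\W}$: one must carefully keep track of the correlation induced by the common block $\X_\ell$ while the remainder of $\hat{\W}$ is independent of $\hat{\W}_\ell$, which is cleanest to do by treating $(\hat{\W},\hat{\W}_\ell)$ as a signal-free two-matrix model and applying a bilinear CLT to the concatenated resolvent. A secondary technical point is making the secular-equation expansion of the second step rigorous with a remainder that is uniformly $o_{\Pbb}(1/\sqrt{M})$, which again relies on the a.s.\ separation of each outlier from the relevant bulk edge --- a consequence of Theorem \ref{theorem:Spike_model_Limits}.
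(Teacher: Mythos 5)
Your proposal is a valid route to the result, but it differs from the paper's proof at nearly every stage, so it is worth drawing out the comparison. The paper adopts the additive signal-plus-noise factorization $\Y_\ell = \Omegabs\S_\ell^* + \W_\ell$, passes through the $(M+N_\ell)\times(M+N_\ell)$ Hermitian linearization $\check{\Y}_\ell$ and a $2K\times 2K$ determinant (Proposition \ref{prop:det}), which reduces the eigenvalue fluctuation to \emph{three} scalar quantities per index $(k,\ell)$ --- a centered bilinear form $\eta_{1,k,\ell}$ of the noise resolvent, a centered quadratic form $\eta_{2,k,\ell}$ in the signal columns, and a cross term $\eta_{3,k,\ell}$ --- and then obtains their joint CLT by deriving and solving a perturbed ODE for the characteristic function (Proposition \ref{prop:perturbed_eq_diff}) via Stein's lemma and the Poincaré inequality. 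You instead use the multiplicative representation $\hat{\R}_\ell = \R^{1/2}\hat{\W}_\ell\R^{1/2}$ with $\hat{\W}_\ell$ white Wishart of aspect ratio $c_\ell$, the $K\times K$ secular equation $\det(\I_K + \lambda\B\,\U^*(\hat{\W}_\ell - (\lambda/\sigma^2)\I)^{-1}\U) = 0$, and isolate a \emph{single} driving bilinear form $\sqrt{M}(\u_k^*(\hat{\W}_\ell - z_{k,\ell}\I)^{-1}\u_k - m_{c_\ell}(z_{k,\ell}))$ per index, which you then feed into a martingale CLT run jointly over the columns of $\X$. Your secular equation and the root $\phi_{c_\ell}(\gamma_k,\sigma^2)$ check out, and your Schur-complement computation $\theta_{k,0}^2 - \sum_\ell \vartheta_{k,\ell}^2/\theta_{k,\ell}^2 = \tfrac{(\gamma_k+\sigma^2)^2}{\gamma_k^2}\,c^2\sigma^4(L-1)$ matches the paper's $\det(\Thetabs)$ formula. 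What your route buys is a cleaner reduction (one bilinear form per index instead of three) and a transparent explanation of the arrowhead shape of $\Thetabs_k$ via independence of $\hat{\W}_1,\ldots,\hat{\W}_L$ and the shared block $\X_\ell$ inside the pooled $\hat{\W}_0$. What it costs is that the genuine technical content is pushed into a \emph{joint} bilinear-form CLT for the correlated family $(\hat{\W}_0,\hat{\W}_1,\ldots,\hat{\W}_L)$ evaluated in $K$ orthogonal deterministic directions at outlier points; this is not literally the single-matrix result of \cite{Benaych-Georges2011a} and would need to be carried out carefully, whereas the paper's Stein/Poincaré apparatus produces the cross-covariances $\vartheta_{k,\ell}$ automatically from the shared Gaussian entries of $\W_\ell$ appearing in both $\hat{\R}_0$ and $\hat{\R}_\ell$. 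One small imprecision in your step two: the off-diagonal entries $\u_j^*(\hat{\W}_\ell - z\I)^{-1}\u_k$ ($j\neq k$) are $\Ocal_{\Pbb}(1/\sqrt{M})$, not $o_{\Pbb}(1/\sqrt{M})$; they are negligible in the eigenvalue expansion because of second-order perturbation theory (the limiting secular matrix has separated diagonal entries at $\lambda=\rho_{k,\ell}$), not because of orthogonality alone.
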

\begin{proof}
    The proof is postponed to Appendix \ref{section:Proof_Theorem_CLT_lambda_R_Hat}.
\end{proof} 
The result of Theorem \ref{theorem:CLT_lambda_R_Hat} provides an extension of \cite[Th. 1.4]{Benaych-Georges2011} which studies a CLT for the $K$ largest eigenvalues of $\hat{\R}_{\ell}$.
We note that the result of Theorem \ref{theorem:CLT_lambda_R_Hat} cannot be inferred directly from \cite[Th. 1.4]{Benaych-Georges2011} and requires a careful study due to the strong dependency between the eigenvalues of $\hat{\R}$ and the ones of $\hat{\R}_{1},\ldots,\hat{\R}_L$.

Theorems \ref{theorem:Spike_model_Limits} and \ref{theorem:CLT_lambda_R_Hat} are exploited in the following section to build a new statistic for the test \eqref{eq:General_Test}, relevant for the low-rank model \eqref{eq:Low_Rank_Model}.

\section{Proposed test statistic}
\label{section:Test_Statistic}

\par Before introducing our new test statistic, we first notice that the test \eqref{eq:General_Test} can be reformulated as:
\begin{align}
    \label{eq:Test_Low_Rank_1}
    \begin{split}
      \mathcal{H}_0 :& \quad \boldsymbol{\Gamma}_1 = \ldots = \boldsymbol{\Gamma}_L\\
      \mathcal{H}_1 :& \quad \exists (i,j) \in \{1, \dots, L\}^2 \text{ s.t. } \Gammabs_i \neq \Gammabs_{j}
    \end{split},
\end{align}
and we assume in the following that the rank $K$ is \emph{known} (see also Remark \ref{remark:unknown_rank} below in case the rank is unknown).
Next, we consider the following lemma, which shows that hypothesis $\Hcal_0$ can be verified by comparing the eigenvalues of matrix $\Gammabs$ with the ones of matrices $\Gammabs_1,\ldots,\Gammabs_L$.
\begin{lemma}
    \label{lemma:Valeurs_Propores_Gamma}
    The following assertions are equivalent:
    % % \vspace{0.15cm}
    \begin{itemize}
        \item[(a)] $\boldsymbol{\Gamma}_1 = \ldots = \boldsymbol{\Gamma}_L$,
        % % \vspace{0.15cm}
        \item[(b)] For all $k=1,\ldots,K$, $\ell = 1,\ldots,L$, $\lambda_k\left(\boldsymbol{\Gamma}_{\ell}\right) = \lambda_k\left(\boldsymbol{\Gamma}\right)$. 
    \end{itemize}
\end{lemma}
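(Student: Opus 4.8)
\emph{The easy direction, (a) $\Rightarrow$ (b),} is immediate: if all the $\Gammabs_\ell$ coincide with a single matrix $\Gammabs_0$, then $\Gammabs = \big(\sum_{\ell=1}^{L}\frac{N_\ell}{N}\big)\Gammabs_0 = \Gammabs_0$, so $\lambda_k(\Gammabs_\ell) = \lambda_k(\Gammabs_0) = \lambda_k(\Gammabs)$ for every $k$ and $\ell$.

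For the substantive direction, (b) $\Rightarrow$ (a), I would set $w_\ell = N_\ell/N \in (0,1)$, so that $\Gammabs = \sum_{\ell=1}^L w_\ell \Gammabs_\ell$ is a convex combination of the rank-$K$ positive semidefinite matrices $\Gammabs_\ell$, and write $\gamma_k = \lambda_k(\Gammabs)$. Since $\rank(\Gammabs_\ell) \le K$, the eigenvalues of each $\Gammabs_\ell$ past the $K$-th vanish, so hypothesis (b) gives $\tr(\Gammabs_\ell^2) = \sum_{k=1}^K \lambda_k(\Gammabs_\ell)^2 = \sum_{k=1}^K \gamma_k^2$ for every $\ell$; call this common value $\rho^2$. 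Using (b) once more together with $\Gammabs \succeq 0$ yields the lower bound $\tr(\Gammabs^2) = \sum_{k\ge 1}\lambda_k(\Gammabs)^2 \ge \sum_{k=1}^K \gamma_k^2 = \rho^2$. The plan is then to produce a matching upper bound from Cauchy--Schwarz for the Frobenius inner product $\langle \A,\B\rangle = \tr(\A\B)$ on Hermitian matrices:
\[
  \tr(\Gammabs^2) = \sum_{\ell,\ell'=1}^L w_\ell w_{\ell'}\,\tr(\Gammabs_\ell \Gammabs_{\ell'}) \le \sum_{\ell,\ell'=1}^L w_\ell w_{\ell'}\,\|\Gammabs_\ell\|_F\,\|\Gammabs_{\ell'}\|_F = \rho^2,
\]
where I used $\|\Gammabs_\ell\|_F^2 = \tr(\Gammabs_\ell^2) = \rho^2$ and $\sum_\ell w_\ell = 1$. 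Combining the two bounds forces $\tr(\Gammabs^2) = \rho^2$, hence the equality $\tr(\Gammabs_\ell \Gammabs_{\ell'}) = \|\Gammabs_\ell\|_F\,\|\Gammabs_{\ell'}\|_F$ must hold for every pair $(\ell,\ell')$. If $\rho = 0$ then all $\Gammabs_\ell$ are zero and there is nothing left to prove; otherwise the equality case of Cauchy--Schwarz gives $\Gammabs_{\ell'} = t\,\Gammabs_\ell$ for some $t > 0$, and equating Frobenius norms ($\rho = t\rho$) gives $t = 1$. Thus $\Gammabs_\ell = \Gammabs_{\ell'}$ for all $\ell,\ell'$, i.e.\ (a) holds.

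I do not anticipate a real obstacle here. The one point that needs care is that (b) constrains only the $K$ largest eigenvalues of $\Gammabs$, whereas a priori $\rank(\Gammabs)$ could be as large as $KL$; this is precisely why the argument is routed through the two-sided control of $\tr(\Gammabs^2)$ rather than a direct eigenvalue comparison, and as a by-product it shows that $\rank(\Gammabs) \le K$ under (b). The same scheme would go through with $\tr(f(\Gammabs))$ for any strictly convex $f$, but the choice $f(x) = x^2$ is what makes the equality analysis transparent via Cauchy--Schwarz.
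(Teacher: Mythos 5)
Your proof is correct and self-contained. The direction (a) $\Rightarrow$ (b) is trivial since $\sum_\ell N_\ell/N = 1$. For (b) $\Rightarrow$ (a), the two-sided squeeze on $\tr(\Gammabs^2)$ is sound: the lower bound $\tr(\Gammabs^2) \ge \rho^2 \vcentcolon= \sum_{k=1}^K \gamma_k^2$ needs only $\Gammabs \succeq 0$ and (b); the upper bound $\tr(\Gammabs^2) \le \rho^2$ uses Cauchy--Schwarz for the Frobenius inner product together with the fact, coming from $\rank(\Gammabs_\ell) \le K$ and (b), that $\|\Gammabs_\ell\|_F^2 = \rho^2$ for every $\ell$. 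The squeeze forces equality in every pairwise Cauchy--Schwarz term, hence $\Gammabs_{\ell'} = t\,\Gammabs_\ell$ for some scalar $t$ (real, since we are in the real inner-product space of Hermitian matrices), which positive semidefiniteness together with equal Frobenius norms pins to $t=1$; you dispose of the degenerate case $\rho=0$ separately, as required. The paper defers its own proof to the cited conference reference, so I cannot check line by line whether your route coincides with theirs. One natural alternative worth noting goes through the \emph{linear} trace: from (b) and $\rank(\Gammabs_\ell)\le K$, $\tr(\Gammabs) = \sum_\ell \frac{N_\ell}{N}\tr(\Gammabs_\ell) = \sum_{k=1}^K\gamma_k$, while $\tr(\Gammabs) = \sum_k\lambda_k(\Gammabs) \ge \sum_{k=1}^K\gamma_k$, so $\rank(\Gammabs)\le K$; one then still needs a second step comparing the ranges/eigenspaces to conclude $\Gammabs_\ell=\Gammabs$. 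Your quadratic version is arguably cleaner because $\tr(\cdot^2)$ resolves both the rank collapse and the identification in a single Cauchy--Schwarz equality analysis.
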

\begin{proof}
    % The proof of Lemma \ref{lemma:Valeurs_Propores_Gamma} is detailed in Appendix \ref{section:Proof_Valeurs_Propores_Gamma}.
    The proof of Lemma \ref{lemma:Valeurs_Propores_Gamma} can be found in \cite{Beisson2021SSP}.
\end{proof} 
From Lemma \ref{lemma:Valeurs_Propores_Gamma}, one can equivalently formulate the test \eqref{eq:Test_Low_Rank_1} as follows
\begin{align}
    \label{eq:Test_Low_Rank_2}
    \begin{split}
        \mathcal{H}_0 :& \quad \forall k, \ell, \  \lambda_{k}\left(\boldsymbol{\Gamma}_{\ell}\right) = \lambda_k\left(\boldsymbol{\Gamma}\right) \\
        \mathcal{H}_1 :& \quad \exists k,\ell :  \lambda_{k}\left(\boldsymbol{\Gamma}_{\ell}\right) \neq \lambda_k\left(\boldsymbol{\Gamma}\right)
    \end{split}.
\end{align}
Consequently, it is possible to discriminate between hypotheses $\mathcal{H}_0$ and $\mathcal{H}_1$ by exploiting only the eigenvalues of the matrices $\boldsymbol{\Gamma}_1, \dots,  \boldsymbol{\Gamma}_L, \boldsymbol{\Gamma}$ for which we can also build consistent estimators in the high-dimensional regime as follows.
Let us consider first the maximum likelihood estimator of the noise variance $\sigma^2$ given by
\begin{equation}
    \label{eq:Estimateur_Sigma_2}
    \hat{\sigma}^2 \vcentcolon= \sum_{\ell=1}^L \frac{N_{\ell}}{N} \frac{1}{M-K} \sum_{k=K+1}^M \lambda_k\left(\hat{\mathbf{R}}_{\ell}\right).
\end{equation}
From \eqref{eq:Low_Rank_Model} and Theorem \ref{theorem:Spike_model_Limits}, one can easily show that $\hat{\sigma}^2 \xrightarrow[]{} \sigma^2$ a.s. as $M\to+\infty$ under both $\Hcal_0$ and $\Hcal_1$. Next, for all $k \in\{1, \dots, KL\}$, let $\hat{\gamma}_k$ be the largest solution to the equation $\phi_c(\gamma_k,\hat{\sigma}^2) = \lambda_k(\hat{\R})$ if $\lambda_k(\hat{\R})>\hat{\sigma}^2(1+\sqrt{c})^2$, or $\hat{\gamma}_k = \hat{\sigma}^2\sqrt{c}$ otherwise. Similarly, for all $k \in \{1,\ldots,K\}$, let $\hat{\gamma}_{k, \ell}$ be the largest solution to the equation $\phi_{c_\ell}(\gamma_{k, \ell},\hat{\sigma}^2) = \lambda_k(\hat{\R}_{\ell})$ if $ \lambda_k(\hat{\R}_\ell)>\hat{\sigma}^2(1+\sqrt{c}_\ell)^2$, or $\hat{\gamma}_{k, \ell} = \hat{\sigma}^2\sqrt{c_\ell}$ otherwise. Then we have the following immediate result, as a consequence of Theorem \ref{theorem:Spike_model_Limits}.
\begin{corollary}
    \label{corollary:gamma_Estimate}
    Under Assumptions \ref{assumption:Regime_Grandes_Dimensions} and \ref{assumption:Convergence_gamma},
    \begin{equation}
        \hat{\gamma}_k \xrightarrow[M\to\infty]{a.s.}
        \begin{cases}
            \gamma_k & \text{ if } \gamma_k > \sigma^2 \sqrt{c}
            \\
            \sigma^2 \sqrt{c} & \text{ otherwise}
        \end{cases},
    \end{equation}
    \begin{equation}
        \hat{\gamma}_{k, \ell} \xrightarrow[M\to\infty]{a.s.}
        \begin{cases}
            \gamma_{k, \ell} & \text{ if } \gamma_{k, \ell} > \sigma^2 \sqrt{c_\ell}
            \\
            \sigma^2 \sqrt{c_\ell} & \text{ otherwise}
        \end{cases}.
    \end{equation}
\end{corollary}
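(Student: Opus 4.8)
The plan is to write each estimator as the image, under a fixed and jointly continuous map, of quantities whose almost sure limits are already known, and then to pass to the limit by continuity. For $c>0$, introduce $g_c\colon[0,\infty)\times(0,\infty)\to(0,\infty)$ by letting $g_c(x,s)$ be the largest solution $\gamma$ of $\phi_c(\gamma,s)=x$ when $x>s(1+\sqrt{c})^2$, and $g_c(x,s)=s\sqrt{c}$ otherwise. By the very definition of the estimators, $\hat{\gamma}_k=g_c\bigl(\lambda_k(\hat{\R}),\hat{\sigma}^2\bigr)$ for $k\in\{1,\ldots,KL\}$ and $\hat{\gamma}_{k,\ell}=g_{c_\ell}\bigl(\lambda_k(\hat{\R}_\ell),\hat{\sigma}^2\bigr)$ for $k\in\{1,\ldots,K\}$, $\ell\in\{1,\ldots,L\}$. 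Hence the corollary reduces to two points: (i) $g_c$ is jointly continuous; (ii) the value of $g_c$ at the relevant limit point is the one asserted.

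For (i), I would first record the shape of $\gamma\mapsto\phi_c(\gamma,s)$ on $(0,\infty)$: writing $\phi_c(\gamma,s)=\gamma+s+sc+s^2c/\gamma$, its $\gamma$-derivative equals $1-s^2c/\gamma^2$, so $\phi_c(\cdot,s)$ is strictly decreasing on $(0,s\sqrt{c})$, strictly increasing on $(s\sqrt{c},\infty)$, and attains its minimum value $s(1+\sqrt{c})^2$ at $\gamma=s\sqrt{c}$. Thus for $x>s(1+\sqrt{c})^2$ the equation $\phi_c(\gamma,s)=x$ has exactly one root in $(s\sqrt{c},\infty)$, which is the larger of its two roots, while for $x=s(1+\sqrt{c})^2$ its only root is $s\sqrt{c}$; consequently $g_c(x,s)$ is exactly the unique $\gamma\in[s\sqrt{c},\infty)$ with $\phi_c(\gamma,s)=\max\{x,\,s(1+\sqrt{c})^2\}$. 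Since $\phi_c$ is jointly continuous and, for each $s$, restricts to a strictly increasing homeomorphism of $[s\sqrt{c},\infty)$ onto $[s(1+\sqrt{c})^2,\infty)$ depending continuously on $s$, its inverse is jointly continuous on $\{(y,s)\colon y\geq s(1+\sqrt{c})^2\}$; composing with the continuous map $(x,s)\mapsto\max\{x,\,s(1+\sqrt{c})^2\}$ yields joint continuity of $g_c$ on $[0,\infty)\times(0,\infty)$. In particular the two branches of the definition glue continuously across $\{x=s(1+\sqrt{c})^2\}$, where the larger root degenerates to $s\sqrt{c}$.

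For (ii), recall that $\hat{\sigma}^2\to\sigma^2>0$ a.s.\ (already noted, and an easy consequence of Theorem~\ref{theorem:Spike_model_Limits}), while Theorem~\ref{theorem:Spike_model_Limits} gives $\lambda_k(\hat{\R})\to\phi_c(\gamma_k,\sigma^2)$ a.s.\ for $k\leq KL$, and its $L=1$ specialization applied to each $\hat{\R}_\ell$ (that is, \cite[Th.~2.7]{Benaych-Georges2011a}, see also \cite{Baik2005}) gives $\lambda_k(\hat{\R}_\ell)\to\phi_{c_\ell}(\gamma_{k,\ell},\sigma^2)$ a.s.\ for $k\leq K$. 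Since $\hat{\sigma}^2>0$ for $M$ large (so that $g_c$ is evaluated on its domain), the joint continuity from (i) gives $\hat{\gamma}_k\to g_c\bigl(\phi_c(\gamma_k,\sigma^2),\sigma^2\bigr)$ and $\hat{\gamma}_{k,\ell}\to g_{c_\ell}\bigl(\phi_{c_\ell}(\gamma_{k,\ell},\sigma^2),\sigma^2\bigr)$ a.s. It remains to evaluate these limits. If $\gamma_k>\sigma^2\sqrt{c}$, strict monotonicity gives $\phi_c(\gamma_k,\sigma^2)>\sigma^2(1+\sqrt{c})^2$, so $g_c\bigl(\phi_c(\gamma_k,\sigma^2),\sigma^2\bigr)$ is the unique point of $(\sigma^2\sqrt{c},\infty)$ mapped by $\phi_c(\cdot,\sigma^2)$ to $\phi_c(\gamma_k,\sigma^2)$, namely $\gamma_k$ itself; if $\gamma_k\leq\sigma^2\sqrt{c}$, then $\phi_c(\gamma_k,\sigma^2)=\sigma^2(1+\sqrt{c})^2$ and $g_c\bigl(\sigma^2(1+\sqrt{c})^2,\sigma^2\bigr)=\sigma^2\sqrt{c}$. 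The same dichotomy with $(c_\ell,\gamma_{k,\ell})$ in place of $(c,\gamma_k)$ gives the stated limit of $\hat{\gamma}_{k,\ell}$.

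The one genuinely delicate step is the \emph{joint} continuity of $g_c$ at the spectral edge $\{x=s(1+\sqrt{c})^2\}$: there the two pieces of the definition meet, $\phi_c(\cdot,s)$ has a minimum, and the ``largest root'' is not a locally Lipschitz function of $x$, so a direct implicit-function argument does not apply; rewriting $g_c$ through $\max\{x,\,s(1+\sqrt{c})^2\}$ together with the global homeomorphism $\phi_c(\cdot,s)\colon[s\sqrt{c},\infty)\to[s(1+\sqrt{c})^2,\infty)$ is precisely what makes it transparent. The remaining ingredients --- the elementary study of $\phi_c$, the identification $\hat{\gamma}_k=g_c(\lambda_k(\hat{\R}),\hat{\sigma}^2)$, and the closing case distinction --- are routine.
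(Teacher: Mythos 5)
Your proof is correct, and since the paper declares Corollary~\ref{corollary:gamma_Estimate} to be an ``immediate result'' of Theorem~\ref{theorem:Spike_model_Limits} without spelling out the details, your write-up simply supplies the standard continuity argument that is being implicitly invoked. The one place where a casual reader might wave hands is exactly the one you flag: when $\gamma_k\le\sigma^2\sqrt{c}$ the limit point $\bigl(\sigma^2(1+\sqrt{c})^2,\sigma^2\bigr)$ lies on the seam between the two branches of the estimator's definition, and for finite $M$ the pair $\bigl(\lambda_k(\hat{\R}),\hat{\sigma}^2\bigr)$ can fall on either side of $\{x=s(1+\sqrt{c})^2\}$; your rewriting $g_c(x,s)=h_c\bigl(\max\{x,\,s(1+\sqrt{c})^2\},s\bigr)$ with $h_c$ the inverse of the increasing branch of $\phi_c(\cdot,s)$ is a clean way to secure joint continuity across that seam. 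One small remark: you could shortcut the homeomorphism argument by observing that $\phi_c(\gamma,s)=x$ is a quadratic in $\gamma$, so that on $\{y\ge s(1+\sqrt{c})^2\}$ one has explicitly $h_c(y,s)=\tfrac12\bigl(y-s-sc+\sqrt{(y-s-sc)^2-4s^2c}\bigr)$, from which joint continuity (including the degeneration of the two roots to $s\sqrt{c}$ on the seam) is immediate; this is entirely equivalent to what you wrote.
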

Considering this result we propose the following test statistic
\begin{equation}
    \label{eq:Statistique_Test_T}
    T(\epsilon) = \mathbb{1}_{(\epsilon,+\infty)}\left(\left\|\hat{\boldsymbol{\gamma}} \right\|^2_2\right),
\end{equation}
where
\begin{equation}
\label{eq:gamma_vector}
    \hat{\boldsymbol{\gamma}} = \left(\hat{\gamma}_k - \hat{\gamma}_{k,\ell}\right)_{\substack{k = 1, \dots, K \\ \ell = 1, \dots, L}}.
\end{equation}
To study the performance in terms of consistency and asymptotic type I error of the test statistic 
\eqref{eq:Statistique_Test_T}, we consider the following assumption which 
ensures that the signal and noise eigenvalues of matrices $\hat{\R}, \hat{\R}_1,\ldots,\hat{\R}_L$ are separated in the \textit{high-dimensional regime}.
\begin{assumption}
    \label{assumption:Convergence_gamma_Separation_Condition}
    For all $k \in \{1, \dots, K\}$ and $\ell \in \{1, \dots, L\}$,
    \begin{align}
        \gamma_{1,\ell} > \ldots > \gamma_{K,\ell} &> \sigma^2 \max\{\sqrt{c}_1,\ldots,\sqrt{c_{L}}\},
        \\
        \gamma_1 > \ldots > \gamma_K &> \sigma^2\sqrt{c}.
    \end{align}
     Moreover, under $\mathcal{H}_1$, there exist $k, \ell$ such that $\gamma_k \neq \gamma_{k,\ell}$.
\end{assumption}
As a consequence of Corollary \ref{corollary:gamma_Estimate}, we have under Assumptions \ref{assumption:Regime_Grandes_Dimensions}-\ref{assumption:Convergence_gamma_Separation_Condition},
\begin{align}
    \left\|\hat{\boldsymbol{\gamma}}\right\|^2_2
    \xrightarrow[M\to\infty]{\mathrm{a.s.}} \left\|\boldsymbol{\gamma}\right\|^2_2,
    \label{eq:conv_gamma_diff}
\end{align}
with 
\begin{align}
    \boldsymbol{\gamma} = \left(\gamma_k - \gamma_{k,l}\right)_{\substack{k = 1, \dots, K \\ \ell = 1, \dots, L}},   
\end{align}
such that $\gammabs = \mathbf{0}$ under $\Hcal_0$ and $\gammabs \neq \mathbf{0}$ under $\Hcal_1$.
This implies the following consistency result.
\begin{theorem}
    \label{theorem:consistency_T}
  Let Assumptions \ref{assumption:Regime_Grandes_Dimensions}-\ref{assumption:Convergence_gamma_Separation_Condition} hold and denote $\epsilon_1 = \left\|\gammabs\right\|_2^2 > 0$ under $\Hcal_1$. Then for all $\epsilon \in \left(0,\epsilon_1\right)$,
  \begin{equation}
    \mathbb{P}_i\left(\lim_{M \to \infty} T(\epsilon) = i\right) = 1,
  \end{equation}
  for $i \in \{0,1\}$, where $\mathbb{P}_i$ is the probability measure under hypothesis $\mathcal{H}_i$.
\end{theorem}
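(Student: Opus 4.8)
The plan is to derive the statement as an essentially immediate consequence of the almost sure convergence \eqref{eq:conv_gamma_diff}, combined with the elementary observation that the indicator defining $T(\epsilon)$ in \eqref{eq:Statistique_Test_T} is locally constant away from the single threshold $\epsilon$. First I would verify that under the separation condition of Assumption \ref{assumption:Convergence_gamma_Separation_Condition} the saturated (``otherwise'') branches of Corollary \ref{corollary:gamma_Estimate} never occur: since $\gamma_k > \sigma^2\sqrt{c}$ for all $k \le K$ and $\gamma_{k,\ell} > \sigma^2\max\{\sqrt{c_1},\ldots,\sqrt{c_L}\} \ge \sigma^2\sqrt{c_\ell}$, Corollary \ref{corollary:gamma_Estimate} (which already absorbs the consistency $\hat{\sigma}^2 \to \sigma^2$) gives $\hat{\gamma}_k \to \gamma_k$ and $\hat{\gamma}_{k,\ell} \to \gamma_{k,\ell}$ a.s. for every $k \in \{1,\ldots,K\}$ and $\ell \in \{1,\ldots,L\}$. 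Because the index set is finite and the maps $\x \mapsto x_k - x_{k,\ell}$ and $\v \mapsto \|\v\|_2^2$ are continuous, this yields $\|\hat{\gammabs}\|_2^2 \xrightarrow[M\to\infty]{\mathrm{a.s.}} \|\gammabs\|_2^2$, i.e.\ exactly \eqref{eq:conv_gamma_diff}.

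Next I would identify the deterministic limit $\|\gammabs\|_2^2$ under each hypothesis. Under $\Hcal_0$, i.e.\ $\Gammabs_1 = \ldots = \Gammabs_L$, Lemma \ref{lemma:Valeurs_Propores_Gamma} (equivalently the reformulation \eqref{eq:Test_Low_Rank_2}) gives $\gamma_{k,\ell} = \gamma_k$ for all $k,\ell$, hence $\gammabs = \mathbf{0}$ and $\|\gammabs\|_2^2 = 0$. Under $\Hcal_1$, the last part of Assumption \ref{assumption:Convergence_gamma_Separation_Condition} provides a pair $(k,\ell)$ with $\gamma_k \neq \gamma_{k,\ell}$, so $\gammabs \neq \mathbf{0}$ and $\|\gammabs\|_2^2 = \epsilon_1 > 0$.

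Finally I would conclude by unwinding the definition of almost sure convergence. Fix $\epsilon \in (0,\epsilon_1)$. On the probability-one event where $\|\hat{\gammabs}\|_2^2 \to 0$ (hypothesis $\Hcal_0$), there exists a realization-dependent $M_0$ with $\|\hat{\gammabs}\|_2^2 < \epsilon$ for all $M \ge M_0$, hence $T(\epsilon) = 0$ eventually; thus $\mathbb{P}_0\big(\lim_{M\to\infty} T(\epsilon) = 0\big) = 1$. On the probability-one event where $\|\hat{\gammabs}\|_2^2 \to \epsilon_1 > \epsilon$ (hypothesis $\Hcal_1$), there exists $M_0$ with $\|\hat{\gammabs}\|_2^2 > \epsilon$ for all $M \ge M_0$, hence $T(\epsilon) = 1$ eventually; thus $\mathbb{P}_1\big(\lim_{M\to\infty} T(\epsilon) = 1\big) = 1$.

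The argument has no deep obstacle; the only delicate point is the discontinuity of $x \mapsto \mathbb{1}_{(\epsilon,+\infty)}(x)$ at $x = \epsilon$, which is precisely why the statement restricts to $\epsilon < \epsilon_1$ (so that under $\Hcal_1$ the limit lies strictly inside $(\epsilon,+\infty)$) and why $T$ is built with the open interval $(\epsilon,+\infty)$ rather than $[\epsilon,+\infty)$ (so that under $\Hcal_0$ the limit $0$ lies strictly outside it for every $\epsilon > 0$); with these choices the limit never coincides with the discontinuity and the convergence of $T(\epsilon)$ follows at once. A secondary point worth stressing in the write-up is the necessity of Assumption \ref{assumption:Convergence_gamma_Separation_Condition}: without the separation condition the saturated values $\sigma^2\sqrt{c}$, $\sigma^2\sqrt{c_\ell}$ from Corollary \ref{corollary:gamma_Estimate} could enter the limit and break the equivalence between $\gammabs = \mathbf{0}$ and $\Hcal_0$ on which the consistency hinges.
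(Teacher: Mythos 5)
Your proposal is correct and follows the same route the paper takes implicitly: the paper states Theorem~\ref{theorem:consistency_T} without a proof environment precisely because it treats it as an immediate consequence of the almost-sure convergence \eqref{eq:conv_gamma_diff} displayed just above, together with the observation that $\gammabs = \mathbf{0}$ under $\Hcal_0$ and $\gammabs \neq \mathbf{0}$ under $\Hcal_1$, and your write-up simply makes that unpacking explicit. In particular, your check that Assumption~\ref{assumption:Convergence_gamma_Separation_Condition} excludes the saturated branches of Corollary~\ref{corollary:gamma_Estimate}, your appeal to Lemma~\ref{lemma:Valeurs_Propores_Gamma} to get $\gamma_{k,\ell}=\gamma_k$ under $\Hcal_0$, and your careful treatment of the indicator's discontinuity at $\epsilon$ all match the paper's intended argument.
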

To control the asymptotic type I error of the proposed test statistic \eqref{eq:Statistique_Test_T}, we also need the following result which, as a consequence of Theorem \ref{theorem:CLT_lambda_R_Hat}, provides a CLT for \eqref{eq:gamma_vector}.
% related to the estimators $\left(\hat{\gamma_k},\hat{\gamma}_{k,1},\ldots,\hat{\gamma}_{k,L}\right)_{k=1,\ldots,K}$, 
\begin{corollary}
\label{corollary:CLT_gamma}
     Under hypothesis $\mathcal{H}_0$ and Assumptions \ref{assumption:Regime_Grandes_Dimensions}-\ref{assumption:Convergence_gamma_Separation_Condition}, we have
    \begin{equation}
        \sqrt{M} \hat{\boldsymbol{\gamma}} \xrightarrow[M\to\infty]{\mathcal{D}} \mathcal{N}_{\mathbb{R}^{KL}}\left(\mathbf{0},\mathbf{H}\Upsilonbs\mathbf{H}^T \right),
    \end{equation}
    where $\H$ is the $KL \times K(L+1)$ matrix defined by $\mathbf{H} = \mathrm{bdiag}\left(\tilde{\mathbf{H}}, \dots, \tilde{\mathbf{H}}\right)$, 
    $\Upsilonbs = \mathrm{bdiag}\left(\Upsilonbs_1, \dots, \Upsilonbs_K\right)$ with $\tilde{\mathbf{H}}$ the $L \times (L+1)$ matrix given by   
    \begin{equation}
        \tilde{\mathbf{H}} = 
        \begin{pmatrix}
          1 & -1 & 0 & \ldots & \ldots & 0\\
          1 & 0 & -1 & \ddots &  & \vdots\\
          \vdots & \vdots & \ddots & \ddots & \ddots& \vdots\\
          \vdots & \vdots & & \ddots & \ddots & 0\\
          1 & 0 & \dots & \dots & 0 & -1
        \end{pmatrix},
    \end{equation}
    and with
    \begin{equation}
        \label{eq:Upsilon_k}
        \Upsilonbs_k =
        \begin{pmatrix}
            \omega^2_{k, 0} & \xi_k & \ldots & \xi_k
            \\
            \xi_k & \ddots & (0)
            \\
            \vdots & (0) & \ddots
            \\
            \xi_k & & & \omega_{k, L}^2 
        \end{pmatrix},
    \end{equation}
where
    \begin{equation}
        \begin{aligned}
            \omega_{k, \ell}^2 &=  \frac{c_{\ell}\gamma_k^2(\gamma_k + \sigma^2)^2}{\gamma_k^2-\sigma^4 c_{\ell}}, \quad \ell = 0,\ldots,L,
            \\
            \xi_k &=  \frac{c_{0}\gamma_k^2 (\gamma_k + \sigma^2)^2}{\gamma_k^2 - \sigma^4 c_{0}}.
        \end{aligned}
    \end{equation}
\end{corollary}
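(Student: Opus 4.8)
The plan is to obtain the CLT for $\sqrt{M}\,\hat{\boldsymbol{\gamma}}$ from the joint CLT of Theorem~\ref{theorem:CLT_lambda_R_Hat} in two stages: first a delta-method step that transfers the CLT from the eigenvalues $\lambda_k(\hat{\R}),\lambda_k(\hat{\R}_\ell)$ to the estimators $\hat{\gamma}_k,\hat{\gamma}_{k,\ell}$, and then a linear step that takes the difference $\hat{\gamma}_k-\hat{\gamma}_{k,\ell}$ encoded by the matrix $\H$. First I would note that, under $\Hcal_0$ and Assumption~\ref{assumption:Convergence_gamma_Separation_Condition}, the separation conditions are strict so that for $M$ large the relevant eigenvalues exceed the detection thresholds $\hat\sigma^2(1+\sqrt c)^2$ and $\hat\sigma^2(1+\sqrt c_\ell)^2$ with probability tending to one; hence eventually $\hat{\gamma}_k$ is defined as the largest root of $\phi_c(\cdot,\hat\sigma^2)=\lambda_k(\hat\R)$ and likewise for $\hat{\gamma}_{k,\ell}$, so I may work with the smooth branch of the inverse of $\gamma\mapsto\phi_{c_\ell}(\gamma,\sigma^2)$ throughout.

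The delta-method step requires care because $\hat{\gamma}_k$ depends on $\lambda_k(\hat\R)$ \emph{and} on the estimator $\hat\sigma^2$, which is itself built from the noise eigenvalues. The key observation is that $\hat\sigma^2-\sigma^2 = o_P(M^{-1/2})$: indeed, $\hat\sigma^2$ is an average of $M-K$ noise eigenvalues of the $\hat\R_\ell$, and standard results on the linear spectral statistics / largest-bulk behaviour of spiked Wishart matrices (the same machinery underlying Theorem~\ref{theorem:Spike_model_Limits} and \cite{Benaych-Georges2011a}) give that this average converges to $\sigma^2$ at rate $o_P(M^{-1/2})$ — the fluctuations of the bulk average are $O_P(M^{-1})$ while the finite-rank perturbation contributes $O(K/M)=O(M^{-1})$. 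Consequently the $\hat\sigma^2$-dependence contributes nothing at the $\sqrt M$ scale, and one can linearize: writing $\psi_{c_\ell}(\cdot,\sigma^2)$ for the inverse function so that $\hat{\gamma}_{k,\ell}=\psi_{c_\ell}(\lambda_k(\hat\R_\ell),\hat\sigma^2)$, a Taylor expansion gives
\begin{equation}
    \sqrt M\bigl(\hat{\gamma}_{k,\ell}-\gamma_k\bigr) = \partial_\lambda\psi_{c_\ell}(\phi_{c_\ell}(\gamma_k,\sigma^2),\sigma^2)\,\sqrt M\bigl(\lambda_k(\hat\R_\ell)-\phi_{c_\ell}(\gamma_k,\sigma^2)\bigr) + o_P(1),
\end{equation}
and analogously for $\hat{\gamma}_k$ with $c$ in place of $c_\ell$. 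Thus $\sqrt M$ times the vector of $(\hat{\gamma}_k-\gamma_k,(\hat{\gamma}_{k,\ell}-\gamma_k)_\ell)_k$ is, up to $o_P(1)$, a fixed block-diagonal linear transform of the Gaussian vector from Theorem~\ref{theorem:CLT_lambda_R_Hat}, hence asymptotically $\mathcal N(\mathbf 0,\mathbf D\,\Thetabs\,\mathbf D^T)$ where $\mathbf D$ is block-diagonal with diagonal entries $\partial_\lambda\psi_{c_\ell}(\phi_{c_\ell}(\gamma_k,\sigma^2),\sigma^2)$.

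It then remains to (i) identify $\partial_\lambda\psi_{c_\ell}=1/\partial_\gamma\phi_{c_\ell}$ by implicit differentiation of $\phi_{c_\ell}(\gamma,\sigma^2)=\frac{(\gamma+\sigma^2)(\gamma+\sigma^2 c_\ell)}{\gamma}$, which gives $\partial_\gamma\phi_{c_\ell}=1-\sigma^4 c_\ell/\gamma^2=(\gamma^2-\sigma^4 c_\ell)/\gamma^2$, so the derivative of the inverse is $\gamma_k^2/(\gamma_k^2-\sigma^4 c_\ell)$; (ii) compute $\mathbf D_k\,\Thetabs_k\,\mathbf D_k^T$ and check it equals $\Upsilonbs_k$ — the diagonal entries become $\bigl(\tfrac{\gamma_k^2}{\gamma_k^2-\sigma^4 c_\ell}\bigr)^2\theta_{k,\ell}^2 = \tfrac{\gamma_k^2}{\gamma_k^2-\sigma^4 c_\ell}\cdot\tfrac{c_\ell(\gamma_k+\sigma^2)^2}{1}\cdot\tfrac{\gamma_k^2}{\gamma_k^2}$, i.e. $\omega_{k,\ell}^2 = \tfrac{c_\ell\gamma_k^2(\gamma_k+\sigma^2)^2}{\gamma_k^2-\sigma^4 c_\ell}$, and the off-diagonal $(0,\ell)$ entries become $\tfrac{\gamma_k^2}{\gamma_k^2-\sigma^4 c}\cdot\tfrac{\gamma_k^2}{\gamma_k^2-\sigma^4 c_\ell}\cdot\vartheta_{k,\ell}$, which after substituting $\vartheta_{k,\ell}=c_0(\gamma_k^2-\sigma^4 c_\ell)(\gamma_k+\sigma^2)^2/\gamma_k^2$ collapses to $\xi_k=\tfrac{c_0\gamma_k^2(\gamma_k+\sigma^2)^2}{\gamma_k^2-\sigma^4 c_0}$ (note the $\ell$-dependence cancels, consistent with the stated form of $\Upsilonbs_k$); and (iii) apply the linear map $\H=\bdiag(\tilde\H,\dots,\tilde\H)$ implementing $\hat{\gamma}_k-\hat{\gamma}_{k,\ell}$, whereupon linearity of Gaussians yields $\sqrt M\,\hat{\boldsymbol\gamma}\to\mathcal N(\mathbf 0,\H\Upsilonbs\H^T)$, after reindexing the blocks from "grouped by $k$" to the ordering in \eqref{eq:gamma_vector}. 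I expect the main obstacle to be step on the rate of $\hat\sigma^2$: one must justify rigorously that replacing $\sigma^2$ by $\hat\sigma^2$ in the definition of the estimators is asymptotically negligible at the $M^{-1/2}$ scale and, in particular, does not introduce correlations with the spike eigenvalue fluctuations — this is where the bulk/spike decoupling for the mixed Wishart model $\hat\R$ and for each $\hat\R_\ell$ must be invoked carefully, presumably by the same arguments used in the appendix for Theorem~\ref{theorem:CLT_lambda_R_Hat}.
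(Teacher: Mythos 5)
Your proposal is correct and follows essentially the same route as the paper: work on the smooth branch guaranteed by Assumption~\ref{assumption:Convergence_gamma_Separation_Condition}, show that the plug-in of $\hat\sigma^2$ is negligible at the $M^{-1/2}$ scale (the paper asserts the slightly stronger $\hat\sigma^2=\sigma^2+\Ocal_{\Pbb}(M^{-1})$, again without detailed proof), transfer the CLT of Theorem~\ref{theorem:CLT_lambda_R_Hat} by the delta method through $\gamma\mapsto\phi_{c_\ell}(\gamma,\sigma^2)$, and finish with the linear map $\H$. The only cosmetic difference is that you obtain the Jacobian $\gamma_k^2/(\gamma_k^2-\sigma^4 c_\ell)$ by implicitly differentiating the explicit formula for $\phi_{c_\ell}$, whereas the paper reads it off as $w_\ell'\bigl(\phi_\ell(\gamma_k+\sigma^2)\bigr)$ from the identities of Lemma~\ref{lemma:formulas}; the resulting variance algebra matches.
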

\begin{proof}
    The proof is deferred to Appendix \ref{section:Delta_Method}.
\end{proof}
From Corollary \ref{corollary:CLT_gamma}, we can adjust the threshold $\epsilon$ in \eqref{eq:Statistique_Test_T} to control the asymptotic type I error in the \textit{high-dimensional regime}, as described in the next result. Let us define $\hat{\Upsilonbs} = \mathrm{bdiag}\left(\hat{\Upsilonbs}_1,\ldots,\hat{\Upsilonbs}_K\right)$ with
\begin{align}
    \hat{\Upsilonbs}_k =
    \begin{pmatrix}
        \hat{\omega}^2_{k, 0} & \hat{\xi}_k & \ldots & \hat{\xi}_k
        \\
        \hat{\xi}_k & \ddots & (0)
        \\
        \vdots & (0) & \ddots
        \\
        \hat{\xi}_k & & & \hat{\omega}_{k, L}^2 
    \end{pmatrix},
\end{align}
where
\begin{equation}
    \begin{aligned}
        \hat{\omega}_{k, \ell}^2 &=  \frac{c_{\ell}\hat{\gamma}_k^2(\hat{\gamma}_k + \hat{\sigma}^2)^2}{\hat{\gamma}_k^2-\hat{\sigma}^4 c_{\ell}}, \quad \ell \geq 0\\
        \hat{\xi}_k &=  \frac{c_{0}\hat{\gamma}_k^2 (\hat{\gamma}_k + \hat{\sigma}^2)^2}{\hat{\gamma}_k^2 - \hat{\sigma}^4 c_{0}}.
    \end{aligned}
\end{equation}
From Corollary \ref{corollary:gamma_Estimate}, it is clear that $\hat{\Upsilonbs}\to \Upsilonbs$ a.s. as $M\to\infty$.
\begin{theorem}
    \label{theorem:Convergence_Loi}
    Let $\mathbf{x} \in \mathcal{N}_{\Rbb^{KL}}(\mathbf{0}, \mathbf{I})$ and $F(t,\boldsymbol{\Xi}) = \mathbb{P}\left(\mathbf{x}^T \boldsymbol{\Xi} \mathbf{x} \leq t\right)$, $\alpha \in (0,1)$ and set
    \begin{equation}
        \hat{\epsilon} = \frac{1}{M} \inf\left\{t \in \mathbb{R}: F\left(t, \mathbf{H}\hat{\Upsilonbs}\mathbf{H}^T\right) \geq 1 - \alpha\right\}.
    \end{equation}
    Then under Assumptions \ref{assumption:Regime_Grandes_Dimensions} and \ref{assumption:Convergence_gamma_Separation_Condition}, we have
    \begin{equation}  
        \mathbb{P}_0(T(\hat{\epsilon})=1) \xrightarrow[M\to\infty]{} \alpha.
    \end{equation}
\end{theorem}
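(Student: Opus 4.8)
The plan is to combine the CLT of Corollary \ref{corollary:CLT_gamma} with the almost-sure convergence of the plug-in estimators $\hat{\Upsilonbs}, \hat{\sigma}^2, \hat{\gamma}_k$, via a Slutsky-type argument, and then control the continuity of the quadratic-form distribution $F$ at the relevant quantile. First I would record that under $\Hcal_0$, Corollary \ref{corollary:CLT_gamma} gives $\sqrt{M}\,\hat{\gammabs} \xrightarrow{\mathcal{D}} \z \sim \Ncal_{\Rbb^{KL}}(\mathbf{0}, \H\Upsilonbs\H^T)$, so by the continuous mapping theorem $M\|\hat{\gammabs}\|_2^2 = \|\sqrt{M}\,\hat{\gammabs}\|_2^2 \xrightarrow{\mathcal{D}} \|\z\|_2^2$, whose law is that of $\x^T (\H\Upsilonbs\H^T) \x$ for $\x \sim \Ncal_{\Rbb^{KL}}(\mathbf{0},\I)$ (writing $\z = (\H\Upsilonbs\H^T)^{1/2}\x$). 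Thus the limiting distribution of $M\|\hat{\gammabs}\|_2^2$ is exactly $F(\cdot, \H\Upsilonbs\H^T)$.

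Next I would handle the data-dependent threshold. Since $\hat{\Upsilonbs} \to \Upsilonbs$ a.s. (noted just before the theorem) and $\H$ is deterministic, we have $\H\hat{\Upsilonbs}\H^T \to \H\Upsilonbs\H^T$ a.s. The quantile functional $\boldsymbol{\Xi} \mapsto \inf\{t : F(t,\boldsymbol{\Xi}) \geq 1-\alpha\}$ is continuous at $\boldsymbol{\Xi} = \H\Upsilonbs\H^T$ provided the limiting quadratic form has a continuous, strictly increasing distribution function near its $(1-\alpha)$-quantile; this holds because, under Assumption \ref{assumption:Convergence_gamma_Separation_Condition}, all the $\omega_{k,\ell}^2$ are strictly positive and one checks $\H\Upsilonbs\H^T$ is positive definite (each $\tilde{\H}$ has full row rank $L$ and each $\Upsilonbs_k \succ 0$ — the latter needs $\omega_{k,0}^2 > \xi_k$ and a Schur-complement check, but one can alternatively note $\Upsilonbs_k$ is a genuine covariance matrix from Theorem \ref{theorem:CLT_lambda_R_Hat} after the linear transform, hence PSD, and nonsingularity follows since $\Thetabs$ is positive definite). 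A positive definite quadratic form in a nondegenerate Gaussian has an absolutely continuous law on $(0,\infty)$, so $F(\cdot,\H\Upsilonbs\H^T)$ is continuous and strictly increasing where it is in $(0,1)$; hence $M\hat{\epsilon} = \inf\{t : F(t,\H\hat{\Upsilonbs}\H^T) \geq 1-\alpha\} \to q_{1-\alpha} := \inf\{t : F(t,\H\Upsilonbs\H^T) \geq 1-\alpha\}$ a.s.

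Finally I would combine the two: $\Prob_0(T(\hat\epsilon)=1) = \Prob_0(\|\hat\gammabs\|_2^2 > \hat\epsilon) = \Prob_0(M\|\hat\gammabs\|_2^2 > M\hat\epsilon)$. Writing $U_M := M\|\hat\gammabs\|_2^2 \xrightarrow{\mathcal D} U$ with $U \sim F(\cdot, \H\Upsilonbs\H^T)$, and $M\hat\epsilon \to q_{1-\alpha}$ a.s. (hence in probability), Slutsky's lemma gives $U_M - M\hat\epsilon \xrightarrow{\mathcal D} U - q_{1-\alpha}$, so $\Prob_0(U_M - M\hat\epsilon > 0) \to \Prob(U > q_{1-\alpha}) = 1 - F(q_{1-\alpha}, \H\Upsilonbs\H^T) = \alpha$, using continuity of $F$ at $q_{1-\alpha}$ (no atom) and the definition of the quantile. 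The main obstacle I anticipate is the joint handling of the random threshold and the converging statistic: one must be careful that $F(\cdot,\cdot)$ is jointly continuous (or at least that the composition is continuous at the limit point) so that $F(M\hat\epsilon, \H\hat\Upsilonbs\H^T)$ behaves well, and the clean way around this is precisely to reduce everything to the deterministic limit $q_{1-\alpha}$ via the a.s. convergence $\H\hat\Upsilonbs\H^T \to \H\Upsilonbs\H^T$ and the established continuity/strict monotonicity of $F(\cdot,\H\Upsilonbs\H^T)$, after which the Slutsky step is routine.
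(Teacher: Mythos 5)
Your proposal is correct, and it is essentially the argument the paper leaves implicit: Theorem~\ref{theorem:Convergence_Loi} is stated without a written proof, being presented as an immediate consequence of Corollary~\ref{corollary:CLT_gamma} and the almost-sure convergence $\hat{\Upsilonbs}\to\Upsilonbs$. You fill in exactly the standard steps --- continuous mapping applied to the CLT for $\sqrt{M}\,\hat{\gammabs}$, continuity of the quantile functional at the positive-definite limit $\H\Upsilonbs\H^T$ (so $M\hat{\epsilon}\to q_{1-\alpha}$ a.s.), and a Slutsky argument with continuity of the limiting law at $q_{1-\alpha}$ --- which is what the authors must have had in mind.
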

In practice, Theorem \ref{theorem:Convergence_Loi} is used as follows. For a fixed realization of $\hat{\Upsilonbs}$, we sample the distribution of the Gaussian quadratic form $\x^T \H \hat{\Upsilonbs} \H^T \x$ and the threshold $\hat{\epsilon}$ is then set as the $(1-\alpha)$-quantile of $\x^T \H \hat{\Upsilonbs} \H^T \x$.
\begin{remark}
    \label{remark:unknown_rank}
    For a more general approach where each $\Gammabs_{\ell}$ has unknown rank $K_{\ell}$, one can obtain consistent estimates of $K_1,\ldots,K_L$ thanks to Theorem \ref{theorem:Spike_model_Limits}.
    Assuming $K_1,\ldots,K_L$ fixed with respect to $M$, and if for $\ell\in \{1,\ldots,L\}$, $\gamma_{K_{\ell},\ell} > \sigma^2\sqrt{c}_{\ell}$, under Assumption \ref{assumption:Convergence_gamma} the quantity
    \begin{equation}
        \hat{K}_{\ell} = \max\left\{k : \lambda_k\left(\hat{\R}_{\ell}\right) > \sigma^2(1+\sqrt{c}_{\ell})^2 + \epsilon_{\ell}\right\},
    \end{equation}
    converges almost surely to $K_{\ell}$ in the \textit{high-dimensional regime}, for all $\epsilon_{\ell} \in \left(0,\phi_{c_{\ell}}\left(\gamma_{K_{\ell},\ell},\sigma^2\right) - \sigma^2(1+\sqrt{c_{\ell}})^2\right)$. This shows that we can build consistent test statistics to capture changes in the rank (see further \cite{Vallet2012b}).
\end{remark}
\begin{remark}
 It is easy to show that under Assumption \ref{assumption:Convergence_gamma_Separation_Condition}, the matrix $\H \Upsilonbs\H^T$ is non singular. Therefore, an alternative approach to obtain a test statistic with controlled asymptotic type I error would be to consider the statistic
 \begin{align}
     \tilde{T}(\epsilon) = \mathbb{1}_{(\epsilon,+\infty)}\left(M\left\| \left(\mathbf{H} \hat{\Upsilonbs} \mathbf{H}^T \right)^{-\frac{1}{2}} \hat{\boldsymbol{\gamma}} \right\|^2_2\right),
 \end{align}
 since from Corollary \ref{corollary:CLT_gamma}, we have
 \begin{align}
    M\left\| \left(\mathbf{H} \hat{\Upsilonbs} \mathbf{H}^T \right)^{-\frac{1}{2}} \hat{\boldsymbol{\gamma}} \right\|^2_2 \xrightarrow[M\to \infty]{\Dcal} \chi^2(KL).
 \end{align}
 Nevertheless, although this approach looks simpler, it appears that the covariance matrix $\H \Upsilonbs\H^T$ is ill-conditioned. 
This can be readily seen, e.g. in the special case where $c_1 = \ldots = c_L$ and for a large SNR. If $\kappa(\Upsilonbs_k)$ denotes the condition number of $\Upsilonbs_k$ defined in \eqref{eq:Upsilon_k}, then we can verify (details are omitted) that $\kappa(\Upsilonbs_k)$ scales with $\gamma^2$ as $\gamma^2\to\infty$.
Therefore, in practice, setting the threshold $\epsilon$ based on the $\chi^2(KL)$ distribution gives poor performance. 
\end{remark}

\section{Some comparisons with alternative methods}
\label{section:alt_methods}

In this section, we compare the test statistic given in \eqref{eq:Statistique_Test_T} with two relevant alternatives for the low-rank model \eqref{eq:Low_Rank_Model} and the \textit{high-dimensional regime} described in Assumption \ref{assumption:Regime_Grandes_Dimensions}. To that purpose, we consider scenarios involving a change of subspace/eigenvalues for the rank $K=1$ model $\Gammabs_{\ell} = \gamma_{1,\ell} \u_\ell \u_\ell^*$, where $\|\u_{\ell}\|_2=1$, and where $\gamma_{1,\ell}$ is independent of $M$.
We precise that our objective is not to provide an exhaustive analysis of all the possible scenarios under $\Hcal_1$, but to draw some performance comparisons, in terms of consistency, out of a few simple cases. In the remainder of this section, we also assume that $L=2$ and $N_1=N_2$ so that $c_1=c_2 =2 c$.

\subsection{A test based on spiked Fisher matrices}

Although test statistics of the form \eqref{eq:Linear_Spectral_Statistic_F_Matrix} are not consistent in the \textit{high-dimensional regime}, we can build consistent test statistics by exploiting the behaviour of the largest and smallest eigenvalues of the Fisher matrices $\hat{\R}_{2}^{-1}\hat{\R}_{1}$ (see \cite{Wang2017}). We propose
\footnote
{
Although outside the scope of this paper, we note that the results of \cite[Th. 6.1]{Wang2017} could be exploited to build a test statistic with controlled asymptotic type I error, which is not the case for \eqref{eq:test_fisher}.
}
to use  $T_{\mathrm{Fisher}}(\epsilon) = \mathbb{1}_{(\epsilon,+\infty)} (F)$ with
\begin{equation}
    \begin{aligned}
        F &=
        \sum_{\substack{\ell,\ell'=1 \\ \ell' \neq \ell}}^L\sum_{k=1}^K
        \Biggl[
        \left(\lambda_k\left(\hat{\mathbf{R}}_{\ell}^{-1}\hat{\mathbf{R}}_{\ell'}\right) - \nu_{\ell,\ell'}^+\right)^+
        \\
        &\qquad\qquad\qquad+
        \left(\nu^-_{\ell,\ell'} - \lambda_{M-k}\left(\hat{\mathbf{R}}_{\ell}^{-1}\hat{\mathbf{R}}_{\ell'}\right)\right)^+
          \Biggr],
    \end{aligned}
     \label{eq:test_fisher}
\end{equation}
 where $\nu_{\ell,\ell'}^{\pm} = \left(\frac{1 \pm \sqrt{c_{\ell}+c_{\ell'}-c_{\ell} c_{ \ell'}}}{1-c_{\ell}}\right)^2$.

\textit{Change of subspace.} Let us consider that under $\Hcal_1$, $\gamma_{1,1} = \gamma_{1,2}$ and $\u_1^*\u_2 \to 0$ as $M\to \infty$, so that the changes between $\Gammabs_1$ and $\Gammabs_2$ are only carried by the unit norm eigenvector $\u_2$. It is easily seen that
\begin{align}
  \lambda_k\left(\R_{2}^{-1} \R_{1}\right) \xrightarrow[M \to \infty]{}
  \begin{cases}
    \frac{\gamma_{1,1}+\sigma^2}{\sigma^2} & \text{ if } k=1,
    \\
    1 & \text{ if } k=2,\ldots,M-1,
    \\
    \frac{\sigma^2}{\gamma_{1,1}+\sigma^2} &  \text{ if } k=M,
  \end{cases}
\end{align}
so that applying \cite[Th. 3.1]{Wang2017} shows that for all small $\epsilon > 0$, $\Pbb_i(\lim T_{\mathrm{Fisher}}(\epsilon) = i) = 1$ for $i \in \{0,1\}$ as $M\to\infty$ iff
\begin{align}
  \frac{\gamma_{1,1}}{\sigma^2} > \beta = \frac{2 \left(c +\sqrt{c-c^2}\right)}{1-2c}.
\end{align}
One can see that $\beta > \sqrt{2c}$ and therefore, from Assumption \ref{assumption:Convergence_gamma_Separation_Condition} and Theorem \ref{theorem:consistency_T}, we deduce that the Fisher based statistic requires a larger SNR $\frac{\gamma_{1,1}}{\sigma^2}$ compared to the Wishart based statistic proposed in \eqref{eq:Statistique_Test_T} to be consistent in the change of subspace scenario.

\textit{Change of eigenvalues.} In that case, we assume that $\gamma_{1,2} = \gamma_{1,1}(1+\delta)$ with $\delta > 0$ and $\u_1 = \u_2$, so that the changes are only carried  by the largest eigenvalue of $\Gammabs_2$. Note that under these settings, Assumption \ref{assumption:Convergence_gamma} is verified and it holds that
\begin{align}
  \lambda_k\left(\R_{2}^{-1} \R_{1}\right) \xrightarrow[M \to \infty]{}
  \begin{cases}
    \frac{\gamma_{1,1}+\sigma^2}{\gamma_{1,1}(1+\delta) +\sigma^2} & \text{ if } k=1,
    \\
    1 & \text{ if } k=2,\ldots,M.
  \end{cases}
\end{align}
Using again \cite[Th. 3.1]{Wang2017}, we have that for all small $\epsilon > 0$, $\Pbb(\lim T_{\mathrm{Fisher}}(\epsilon) = i) = 1$ for $i \in {0,1}$ as $M \to \infty$ iff
\begin{align}
    \delta > \frac{2(c + \sqrt{c-c^2})}{1-2c},
    \label{eq:cond_delta_1}
\end{align}
and
\begin{align}
    \frac{\gamma_{1,1}}{\sigma^2} > \beta =\frac{2(c+\sqrt{c-c^2})}{(1+\delta)(1-2c) - (1 + 2\sqrt{c-c^2})}.
    \label{eq:cond_delta_2}
\end{align}
In this scenario, one can see that the minimal SNR  $\beta$ decreases when $\delta$ increases, which can be exploited to produce conditions where the Fisher test statistic is consistent while the Wishart one is not. Indeed, choose $\sqrt{c} < \frac{\gamma_{1,1}}{\sigma^2} < \sqrt{2c}$ and $\delta$ large enough so that \eqref{eq:cond_delta_1} and \eqref{eq:cond_delta_2} are verified. Then it can be seen from Corollary \ref{corollary:gamma_Estimate} that $\|\hat{\gammabs}\|_2^2 \to 2(\gamma_{1,1} - \sigma^2 \sqrt{2c})^2$ a.s. as $M\to \infty$ and therefore for all small $\epsilon > 0$, $\Pbb_0\left(\lim T(\epsilon) = 1\right) =1$.

\subsection{The GLR for \eqref{eq:Test_Low_Rank_1}}

As an alternative to the GLR for the general covariance equality test \eqref{eq:General_Test}, the GLR for the low-rank test \eqref{eq:Test_Low_Rank_1} can be derived. Classical computations (details are omitted) provide the following test statistic $T_{\mathrm{GLR -LR}}(\epsilon) = \mathbb{1}_{(\epsilon,+\infty)}(G)$ where
\begin{equation}
    \begin{aligned}
        &G = - \sum_{\ell=1}^L N_\ell \sum_{k = 1}^K \log\left(\frac{\lambda_k(\hat{\textbf{R}}_\ell)}{\lambda_k(\hat{\textbf{R}})}\right)\\
        &- N (M - K) \log\left( \frac{\frac{1}{M - K} \sum\limits_{\ell=1}^L \frac{N_\ell}{N}\sum\limits_{k = K+1}^M \lambda_k(\hat{\textbf{R}}_\ell)}{\frac{1}{M - K}\sum\limits_{k = K+1}^M \lambda_k(\hat{\textbf{R}})} \right).
    \end{aligned}
    \label{eq:G_Expression}
\end{equation}
Using Theorem \ref{theorem:Spike_model_Limits}, it can be shown that $G \to G_{\infty}$ a.s. as $M\to\infty$
where
\begin{align}
    G_{\infty} =
    \sum_{\ell=1}^L \frac{c}{c_{\ell}} \sum_{k=1}^K
    \left(\psi\left(\frac{\phi_{c}(\gamma_k)}{\sigma^2}\right) - \psi\left(\frac{\phi_{c_{\ell}}(\gamma_{k,\ell})}{\sigma^2}\right)\right),
    \label{eq:GLR_limit}
\end{align}
with $\psi(x) = x - \log(x)$. 

Let us consider a change of eigenvalues with $\gamma_{1,2} = \gamma_{1,1} + \delta$ and $\u_1 = \u_2$ under $\Hcal_1$. Then it is easy to see that under both $\Hcal_0$ and $\Hcal_1$, $G_{\infty} = - c +\Ocal\left(\frac{1}{\gamma_{1,1}}\right)$ as $\gamma_{1,1} \to +\infty$. Regarding the proposed test \eqref{eq:Statistique_Test_T}, we have 
$\left\|\gammabs\right\|_2^2 = \frac{\delta^2}{2}$ under $\Hcal_1$ which shows the limit \eqref{eq:conv_gamma_diff} under $\Hcal_0$ and $\Hcal_1$ cannot be made arbitrarily close as $\gamma_{1,1} \to \infty$. This suggests that for a large $\gamma_{1,1}$ and a fixed change $\delta$, the GLR for the low-rank model might experience a performance loss compared to the test \eqref{eq:Statistique_Test_T}.

\section{Simulations}
\label{section:Simulations}

\par In this section, we provide simulations to illustrate the performance of the test statistic $T$ proposed in \eqref{eq:Statistique_Test_T}, and to perform some comparisons with the alternative test statistics introduced in Section \ref{section:alt_methods}. 
We consider $\sigma^2 = 0.5$, $K = 2, L = 2$ as well as a Toeplitz model of rank $K=2$ for the covariance matrix $\Gammabs_{\ell}$ which can therefore be written as
\begin{align}
        \boldsymbol{\Gamma}_\ell = \gamma_{1, \ell} \textbf{a}\left(\theta_{1,\ell} \right)\textbf{a}^*\left(\theta_{1, \ell}\right)
        + \gamma_{2, \ell}  \textbf{a}\left(\theta_{2, \ell}\right)
        \textbf{a}^*\left(\theta_{2, \ell}\right),
    \label{eq:Gamma_Simulations}
\end{align}
with $\mathbf{a}(\theta) = \frac{1}{\sqrt{M}} (1, \mathrm{e}^{\mathrm{i} \theta}\ldots,\mathrm{e}^{\mathrm{i} (M-1) \theta})^T$. Note that the model \eqref{eq:Gamma_Simulations} is common in spectral analysis and array processing \cite{stoica_spectral_2005}.
    
    \subsection{Empirical and asymptotic Type I error of $T(\epsilon)$}
    
We first illustrate the result of Theorem \ref{theorem:Convergence_Loi} and consider $\theta_{k,\ell} = 0$ for $(k, \ell) \in \{1,2\}^2$, $\gamma_{1,\ell} = 3$ and $\gamma_{2,\ell} = 1.5$ for $\ell \in \{1,2\}$ and $N_1=N_2=2M$. The threshold $\epsilon$ of \eqref{eq:Statistique_Test_T} is set as the $(1-\alpha)$-quantile of the Gaussian quadratic form $\mathbf{x}^T\mathbf{H}\hat{\Upsilonbs}\mathbf{H}^T\mathbf{x}$ with $\mathbf{x} \sim \mathcal{N}_{\mathbb{R}^{KL}}(\mathbf{0}, \mathbf{I})$, and we provide in \textsc{Table} \ref{tab:ErrorTypeI} the empirical Type I error of $T(\epsilon)$ (evaluated over 100000 iterations) for $M\in \{10,20,50,100\}$.
\begin{table}[ht]
    \centering
    \caption{Type I error of $T(\epsilon)$}
    \begin{tabular} { |P{2cm}||P{0.8cm}|P{0.8cm}|P{0.8cm}|P{0.8cm}|P{0.8cm}| }
        \hline
        \diagbox[width=2.3cm]{$T(\epsilon)$}{$\alpha$} & 0.005 & 0.01 & 0.02 & 0.05 & 0.10\\
        \hline
        $M = 10$        & 0.002 & 0.004 & 0.009 & 0.028 & 0.065\\
        \hline
        $M = 20$        & 0.0025 & 0.005 & 0.01 & 0.03 & 0.073\\
        \hline
        $M = 50$        & 0.003 & 0.006 & 0.013 & 0.038 & 0.083\\
        \hline
        $M = 100$       & \textbf{0.004} & \textbf{0.008} & \textbf{0.016} & \textbf{0.043} & \textbf{0.09}\\
        \hline
    \end{tabular}
    \label{tab:ErrorTypeI}
\end{table}
Table \ref{tab:ErrorTypeI} thus shows that the empirical type I error is close to the asymptotic
type I error predicted in Theorem \ref{theorem:Convergence_Loi}, when $M$ is increasing.

    \subsection{Comparisons of powers}

In this section, we evaluate the proposed test statistic on synthetic data by considering the following two scenarios.
\begin{enumerate}
    \item[(1)] \textit{Change of subspace:} under $\mathcal{H}_0$, $\theta_{1, 1} = \theta_{1, 2} = 0$, $\theta_{2, 1} = \theta_{2, 2} = \frac{\pi}{8}$ and under $\mathcal{H}_1$,  $\theta_{1, 1} = 0$, $\theta_{1, 2} = \frac{\pi}{2}$, $\theta_{2, 1} = \frac{\pi}{8}$, $\theta_{2, 2} = \frac{\pi}{2} + \frac{\pi}{8}$. We will also consider under both hypothesis $\gamma_{1, 1} = \gamma_{1, 2} = 2$, $\gamma_{2, 1} = \gamma_{2, 2} = 1$ and  $N_1 = N_2 = 2M$ thus $c_1 = c_2$.
    \item[(2)] \textit{Change of Eigenvalues:} under $\mathcal{H}_0$, $\gamma_{1, 1} = \gamma_{1, 2} = 2$, $\gamma_{2, 1} = \gamma_{2, 2} = 1.5$ and under $\mathcal{H}_1$, $\gamma_{1, 1} = 2$, $\gamma_{1, 2} = 5$,  $\gamma_{2, 1} = 1.5$, $\gamma_{2, 2} = 4$.  We will also consider under both hypothesis $\theta_{1, 1} = \theta_{1, 2} = 0$, $\theta_{2, 1} = \theta_{2, 2} = 0$ and $N_1 = N_2 = 4M$ thus $c_1 = c_2$.
\end{enumerate}
In the simulations that follow, for both scenarios,  we compute the power of different test statistics for a given type I error $\alpha$ and different values of $M$. The statistic $T(\epsilon)$, which will be termed as "Wishart" below, will be compared to the statistics $T_{\mathrm{Fisher}}(\epsilon)$ (termed as "Fisher"), $T_{GLR-LR}(\epsilon)$ (termed as "GLR-LR") and $T_{GLR}(\epsilon) = \mathbb{1}_{(\epsilon,+\infty)} (S_{|_{\varphi = \log}})$ where $S$ is given in \eqref{eq:Linear_Spectral_Statistic_F_Matrix} (termed as "GLR"). For each of the statistics, the threshold $\epsilon$ is adjusted separately to achieve a type I error of $\alpha$. Note that for both scenarios, Assumptions \ref{assumption:Convergence_gamma}, \ref{assumption:Convergence_gamma_Separation_Condition} are verified and that the condition for the Fisher statistic to be consistent is verified as well.
\begin{table}[ht]
    \centering
    \caption{Power for different values of $M$ (change of eigenvalues scenario)}
    \begin{tabular}{ |P{2.5cm}||P{0.7cm}|P{0.7cm}|P{0.7cm}|P{0.7cm}|P{0.7cm}|  }
        \hline
        \diagbox[width=2.4cm]{Statistics}{$\alpha$} & 0.005 & 0.01 & 0.02 & 0.05 & 0.1 \\
        \hline
        \multicolumn{6}{|c|}{$M = 10$} \\
        \hline
        GLR                 & 0.120 & 0.181 & 0.266 & 0.412 & 0.550\\
        \hline
        GLR-LR              & 0.381 & 0.483 & 0.588 & 0.734 & 0.832\\
        \hline
        Fisher              & 0.309 & 0.397 & 0.5 & 0.653 & 0.775\\
        \hline
        Wishart             & \textbf{0.998} & \textbf{0.999} & \textbf{0.999} & \textbf{1} & \textbf{1}\\
        \hline
        \multicolumn{6}{|c|}{$M = 20$} \\
        \hline
        GLR                 & 0.137 & 0.197 & 0.277 & 0.424 & 0.569\\
        \hline
        GLR-LR              & 0.736 & 0.808 & 0.87 & 0.934 & 0.967\\
        \hline
        Fisher              & 0.578 & 0.672 & 0.762 & 0.861 & 0.923\\
        \hline
        Wishart             & \textbf{1} & \textbf{1} & \textbf{1} & \textbf{1} & \textbf{1}\\
        \hline
        \multicolumn{6}{|c|}{$M = 50$} \\
        \hline
        GLR                 & 0.145 & 0.209 & 0.297 & 0.445 & 0.591\\
        \hline
        GLR-LR              & 0.992 & 0.996 & \textbf{1} & \textbf{1} & \textbf{1}\\
        \hline
        Fisher              & 0.946 & 0.965 & 0.98 & 0.992 & 0.997\\
        \hline
        Wishart             & \textbf{1} & \textbf{1} & \textbf{1} & \textbf{1} & \textbf{1}\\
        \hline
        \multicolumn{6}{|c|}{$M = 100$} \\
        \hline
        GLR                 & 0.154 & 0.207 & 0.290 & 0.445 & 0.591\\
        \hline
        GLR-LR              & \textbf{1} & \textbf{1} & \textbf{1} & \textbf{1} & \textbf{1}\\
        \hline
        Fisher              & 0.998 & 0.999 & \textbf{1} & \textbf{1} & \textbf{1}\\
        \hline
        Wishart             & \textbf{1} & \textbf{1} & \textbf{1} & \textbf{1} & \textbf{1}\\
        \hline
    \end{tabular}
    \label{tab:EigenvaluesScenario_M}
\end{table}
\begin{table}[ht]
    \centering
    \caption{Power for different values of $M$ (change of subspace scenario)}
    \begin{tabular}{ |P{2.5cm}||P{0.7cm}|P{0.7cm}|P{0.7cm}|P{0.7cm}|P{0.7cm}|  }
        % \hline
        % & \multicolumn{5}{|c|}{PD Values} \\
        \hline
        \diagbox[width=2.4cm]{Statistics}{$\alpha$} & 0.005 & 0.01 & 0.02 & 0.05 & 0.1 \\
        \hline
        \multicolumn{6}{|c|}{$M = 10$} \\
        \hline
        GLR                 & 0.493 & 0.592 & 0.701 & 0.826 & 0.906\\
        \hline
        GLR-LR              & \textbf{0.992} & \textbf{0.996} & \textbf{0.998} & \textbf{0.999} & \textbf{1}\\
        \hline
        Fisher              & 0.149 & 0.215 & 0.312 & 0.473 & 0.624\\
        \hline
        Wishart             & 0.026 & 0.056 & 0.119 & 0.3 & 0.519\\
        \hline
        \multicolumn{6}{|c|}{$M = 20$} \\
        \hline
        GLR                 & 0.757 & 0.829 & 0.89 & 0.949 & 0.978\\
        \hline
        GLR-LR              & \textbf{1} & \textbf{1} & \textbf{1} & \textbf{1} & \textbf{1}\\
        \hline
        Fisher              & 0.398 & 0.493 & 0.597 & 0.739 & 0.84\\
        \hline
        Wishart             & 0.646 & 0.812 & 0.924 & 0.988 & \textbf{1}\\
        \hline
        \multicolumn{6}{|c|}{$M = 50$} \\
        \hline
        GLR                 & 0.832 & 0.883 & 0.927 & 0.968 & 0.987\\
        \hline
        GLR-LR              & \textbf{1} & \textbf{1} & \textbf{1} & \textbf{1} & \textbf{1}\\
        \hline
        Fisher              & 0.783 & 0.846 & 0.894 & 0.944 & 0.972\\
        \hline
        Wishart             & \textbf{1} & \textbf{1} & \textbf{1} & \textbf{1} & \textbf{1}\\
        \hline
        \multicolumn{6}{|c|}{$M = 100$} \\
        \hline
        GLR                 & 0.838 & 0.891 & 0.934 & 0.972 & 0.988\\
        \hline
        GLR-LR              & \textbf{1} & \textbf{1} & \textbf{1} & \textbf{1} & \textbf{1}\\
        \hline
        Fisher              & 0.955 & 0.971 & 0.984 & 0.993 & 0.997\\
        \hline
        Wishart             & \textbf{1} & \textbf{1} & \textbf{1} & \textbf{1} & \textbf{1}\\
        \hline
    \end{tabular}
    \label{tab:EigenvectorsScenario_M}
\end{table}
We observe that in both scenarios, the proposed Wishart statistic shows a significantly better performance as $M$ is increasing. Second, while the GLR-LR statistic outperforms the Wishart one for low dimensions $M$ in the change of subspace scenario, the Wishart statistic still demonstrates a higher power compared to the Fisher and GLR statistics for both scenarios.

The next simulation in Table \ref{tab:EigenVaScenario_sigma} shows the evolution of the power for different values of the noise variance in the change of eigenvalues scenario ($M = 100$).
\begin{table}[ht]
    \centering
    \caption{Power for different values of $\sigma^2$ (change of eigenvalues scenario)}
    \begin{tabular}{ |P{2.5cm}||P{0.7cm}|P{0.7cm}|P{0.7cm}|P{0.7cm}|P{0.7cm}|  }
        \hline
        \diagbox[width=2.4cm]{Statistics}{$\alpha$} & 0.005 & 0.01 & 0.02 & 0.05 & 0.1 \\
        \hline
        \multicolumn{6}{|c|}{$\sigma^2 = 0.75$} \\
        \hline
        GLR             & 0.1 & 0.153 & 0.226 & 0.368 & 0.512\\
        \hline
        GLR-LR          & 0.999 & \textbf{1} & \textbf{1} & \textbf{1} & \textbf{1}\\
        \hline
        Fisher          & 0.925 & 0.95 & 0.97 & 0.987 & 0.944\\
        \hline
        Wishart         & \textbf{1} & \textbf{1} & \textbf{1} & \textbf{1} & \textbf{1}\\
        \hline
        \multicolumn{6}{|c|}{$\sigma^2 = 1$} \\
        \hline
        GLR             & 0.079 & 0.121 & 0.185 & 0.310 & 0.448\\
        \hline
        GLR-LR          & 0.995 & 0.998 & 0.999 & \textbf{1} & \textbf{1}\\
        \hline
        Fisher          & 0.662 & 0.736 & 0.809 & 0.89 & 0.938\\
        \hline
        Wishart         & \textbf{1} & \textbf{1} & \textbf{1} & \textbf{1} & \textbf{1}\\
                \hline
        \multicolumn{6}{|c|}{$\sigma^2 = 5.5$} \\
        \hline
        GLR             & 0.008 & 0.019 & 0.037 & 0.083 & 0.152\\
        \hline
        GLR-LR          & 0.029 & 0.045 & 0.07 & 0.13 & 0.207\\
        \hline
        Fisher          & 0.008 & 0.015 & 0.029 & 0.07 & 0.133\\
        \hline
        Wishart         & \textbf{0.315} & \textbf{0.409} & \textbf{0.498} & \textbf{0.649} & \textbf{0.762}\\
        \hline
    \end{tabular}
    \label{tab:EigenVaScenario_sigma}
\end{table}
The same can be done for the change of subspace scenario in Table \ref{tab:EigenVecScenario_sigma}.
\begin{table}[ht]
    \centering
    \caption{Power for different values of $\sigma^2$ (change of subspace scenario)}
    \begin{tabular}{ |P{2.5cm}||P{0.7cm}|P{0.7cm}|P{0.7cm}|P{0.7cm}|P{0.7cm}|  }
        \hline
        \diagbox[width=2.4cm]{Statistics}{$\alpha$} & 0.005 & 0.01 & 0.02 & 0.05 & 0.1 \\
        \hline
        \multicolumn{6}{|c|}{$\sigma^2 = 0.75$} \\
        \hline
        GLR             & 0.4 & 0.496 & 0.602 & 0.749 & 0.849\\
        \hline
        GLR-LR          & \textbf{1} & \textbf{1} & \textbf{1} & \textbf{1} & \textbf{1}\\
        \hline
        Fisher          & 0.329 & 0.414 & 0.510 & 0.625 & 0.763\\
        \hline
        Wishart         & \textbf{1} & \textbf{1} & \textbf{1} & \textbf{1} & \textbf{1}\\
        \hline
        \multicolumn{6}{|c|}{$\sigma^2 = 1$} \\
        \hline
        GLR             & 0.172 & 0.246 & 0.34 & 0.505 & 0.646\\
        \hline
        GLR-LR          & \textbf{1} & \textbf{1} & \textbf{1} & \textbf{1} & \textbf{1}\\
        \hline
        Fisher          & 0.077 & 0.119 & 0.18 & 0.297 & 0.424\\
        \hline
        Wishart         & \textbf{1} & \textbf{1} & \textbf{1} & \textbf{1} & \textbf{1}\\
        \hline
        \multicolumn{6}{|c|}{$\sigma^2 = 2.5$} \\
        \hline
        GLR             & 0.017 & 0.031 & 0.057 & 0.120 & 0.209\\
        \hline
        GLR-LR          & \textbf{0.341} & \textbf{0.423} & \textbf{0.537} & \textbf{0.695} & \textbf{0.809}\\
        \hline
        Fisher          & 0.007 & 0.015 & 0.03 & 0.068 & 0.129\\
        \hline
        Wishart         & 0.256 & 0.327 & 0.414 & 0.566 & 0.655\\
        \hline
    \end{tabular}
    \label{tab:EigenVecScenario_sigma}
\end{table}
We observe that the test statistics designed for a low-rank scenario (Wishart, Fisher, GLR-LR) outperform the GLR in general. Additionally, when the noise variance $\sigma^2$ becomes too large, the conditions on the SNR ensuring the consistency of these statistics (Assumption \ref{assumption:Convergence_gamma_Separation_Condition} and the conditions of \cite{Wang2017}) are not met anymore, and one observes in that case a significant drop of the performance ($\sigma^2 = 5.5$ in Table \ref{tab:EigenVaScenario_sigma} and $\sigma^2 = 2.5$ in Table \ref{tab:EigenVecScenario_sigma}).

\begin{remark}
    In view of the simulations results described in Tables \ref{tab:EigenvaluesScenario_M}-\ref{tab:EigenVecScenario_sigma}, we observe that the proposed test statistic \eqref{eq:Statistique_Test_T} performs poorly when the conditions described in Assumptions 
    \ref{assumption:Regime_Grandes_Dimensions} and \ref{assumption:Convergence_gamma_Separation_Condition}
    are not met, i.e. when the dimension $M$ or the SNR are not large enough. Scenarios where the rank $K$
    is also of the same order of magnitude than $M$, thus violating Assumption \ref{assumption:Regime_Grandes_Dimensions}, will also invalidate Theorem \ref{theorem:Convergence_Loi}
    and the asymptotic type I error will be poorly controlled in that case.
\end{remark}
    
    \subsection{An application to change detection in SAR images}

In this section, we evaluate the performance of the proposed test statistic on images drawn from the UAV-SAR dataset of NASA/JPL-Caltech (SanAnd\_26524\_03, Segment 4). We consider two scenes with respective sizes $2360 \times 600$ and $2300 \times 600$ pixels, which have been previously used in \cite{Mian2018,Abdallah2019}, and which are formed  of $L=2$ images acquired within a 5 years interval (see Figures \ref{fig:scene1} and \ref{fig:scene2}). 
\begin{figure}
    \centering
    \includegraphics[width=8cm, height=5cm]{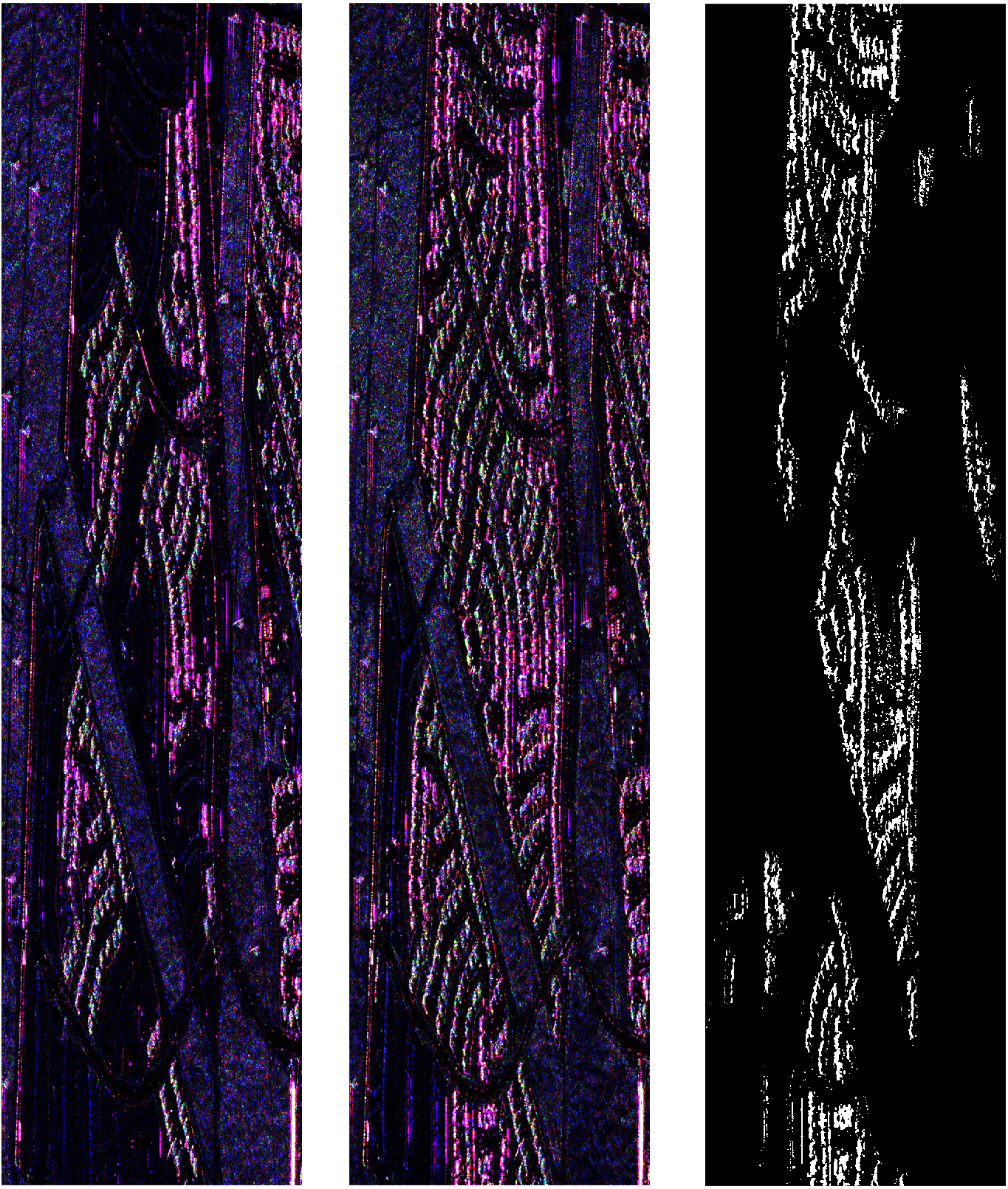}
    \caption{Scene 1 (Pauli representation) at two different times and its ground truth}
    \label{fig:scene1}
\end{figure}
\begin{figure}
    \centering
    \includegraphics[width=8cm, height=5cm]{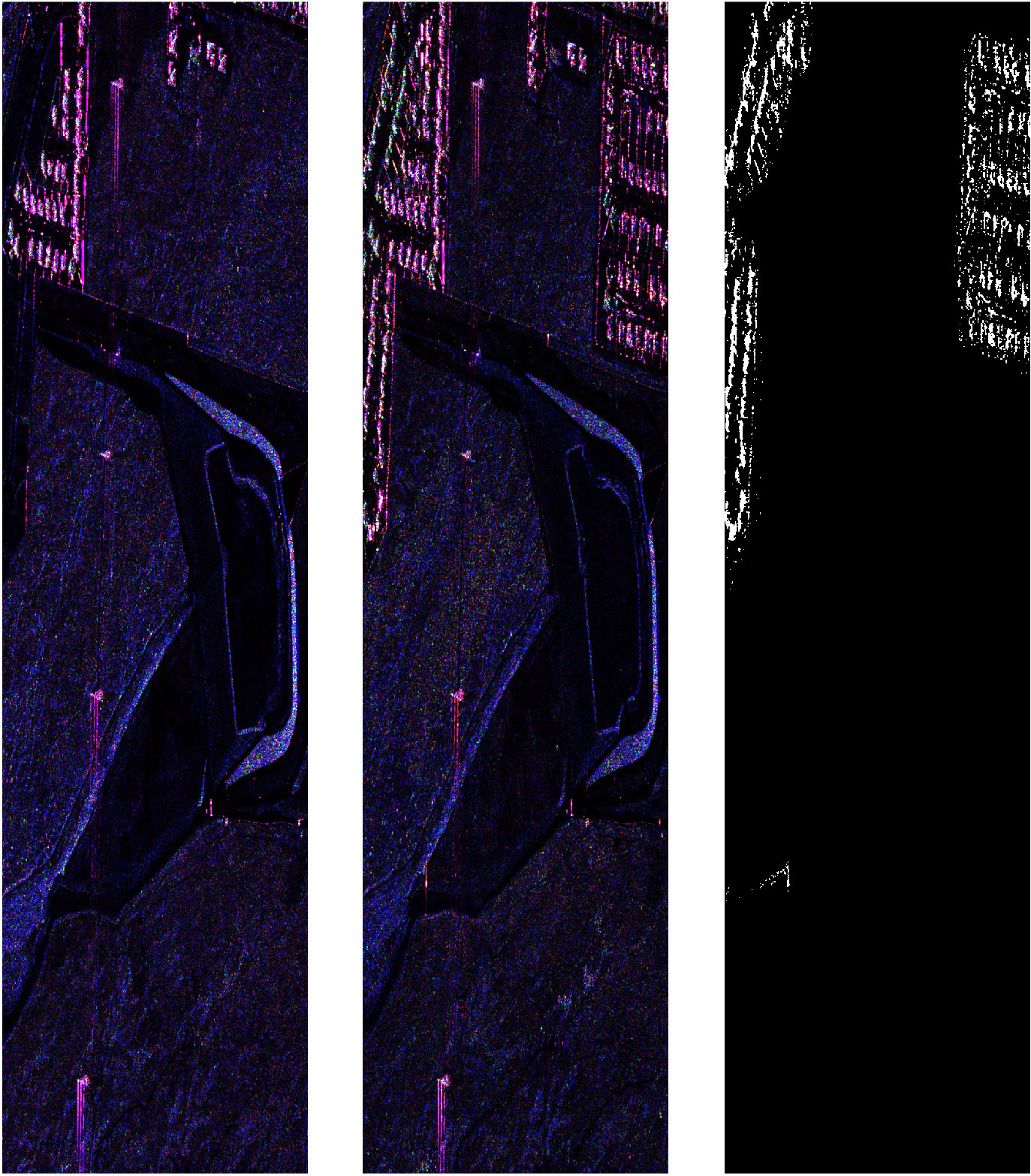}
    \caption{Scene 2 (Pauli representation) at two different times and its ground truth}
    \label{fig:scene2}
\end{figure}
The azimuthal resolution is approximately $0.6$ m while the distance resolution is $1.67$ m. The dimension of each pixel, which was initially of $M=3$, has been increased to $M=12$ using the wavelet decomposition technique of \cite{mian2019design}. Local patches of sizes $5 \times 5$ centered around each pixel under test are used for estimation, that is $N_1=N_2=25$. In Figure \ref{fig:cumsum}, the ratio
\begin{align}
    r(k)
    = 
    \frac
    {
         \Ebb\left[\sum_{i=1}^k\lambda_i\left(\hat{\R}_{1}\right)\right]
    }
    {
        \Ebb\left[\tr\left(\hat{\R}_{1}\right)\right]
    },
\end{align}
is plotted for both scenes, where the expectations are estimated by a sample mean over all the local patches. The rank $K$ is set to 5 in the following to reach a ratio of $r(K) \gtrapprox 95\%$.
\begin{figure}
     \centering
    \centerline
    {
        \subfigure[Scene 1]{\includegraphics[scale=0.07]{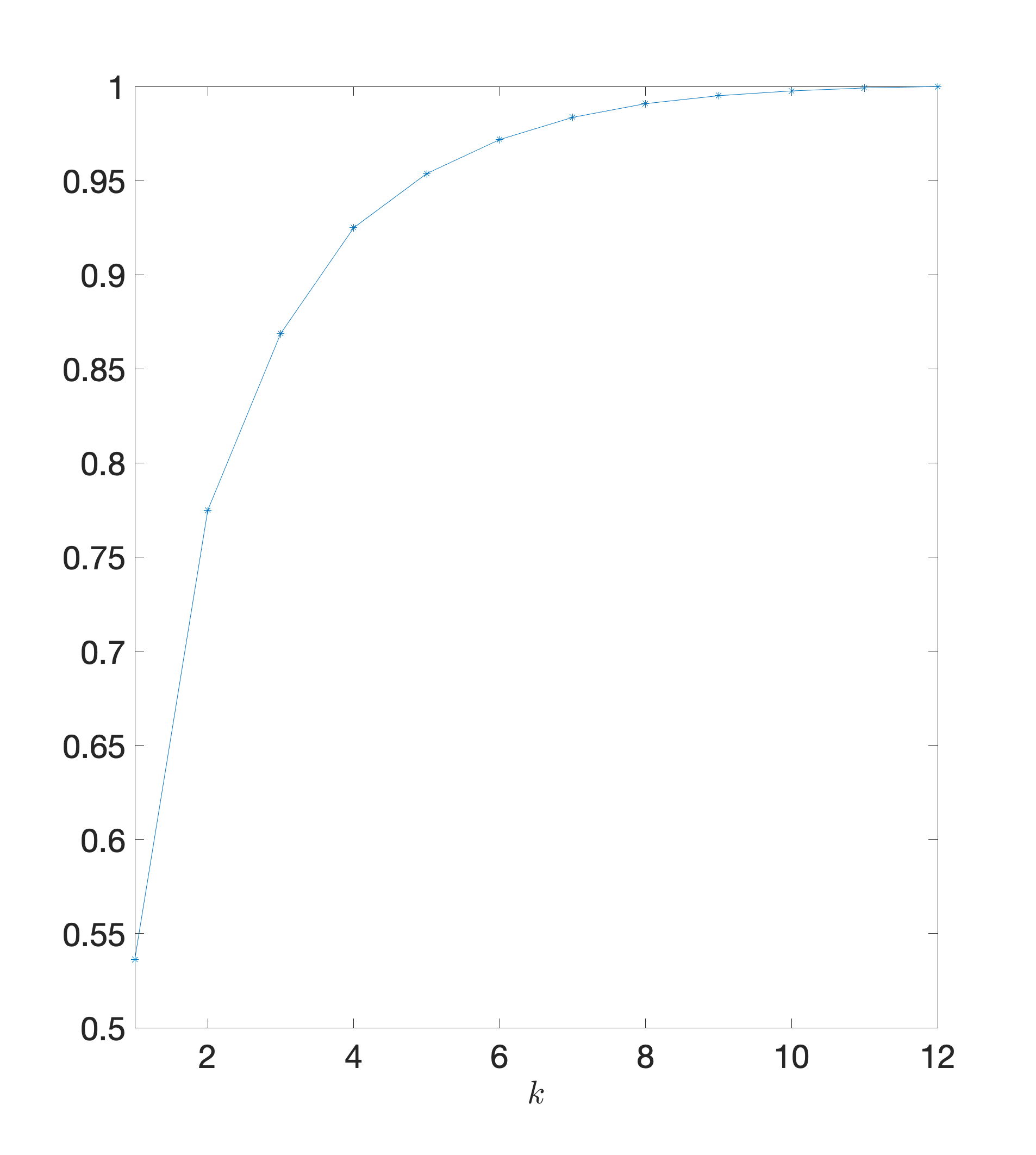}}
        \hfill
        \subfigure[Scene 2]{\includegraphics[scale=0.072]{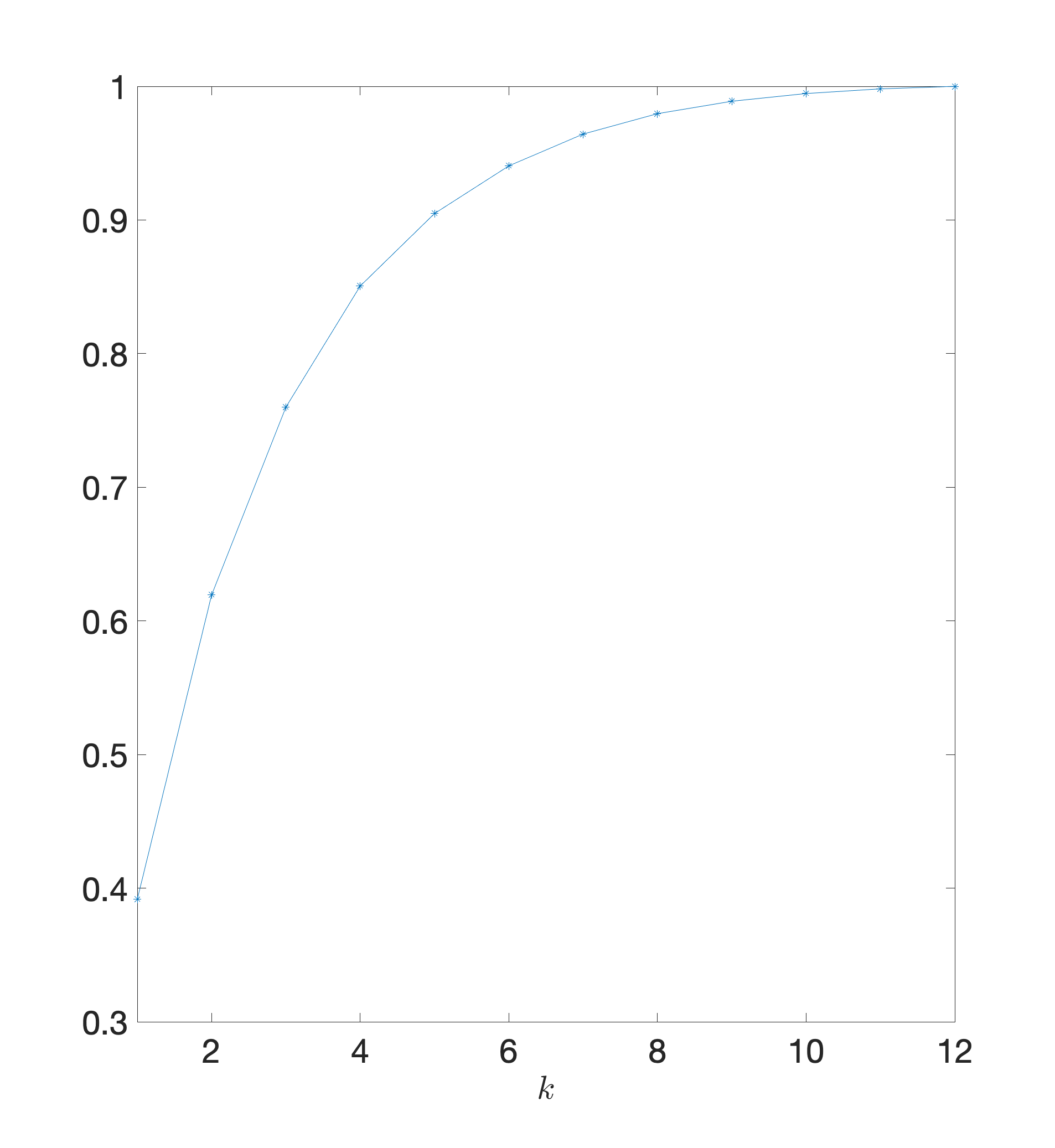}}
    }
    \caption{Ratio $k \mapsto r(k)$ for both scenes}
    \label{fig:cumsum}
\end{figure}
In Figure \ref{fig:SAR_ROC} are plotted the ROC curves for scenes 1 and 2, where we have compared the performance of the proposed test statistic \eqref{eq:Statistique_Test_T}, the GLR, the GLR-LR and the method of \cite{Abdallah2019}.
\begin{figure}
    \centering
    \centerline
    {
        \subfigure[Scene 1]{\includegraphics[scale=0.07]{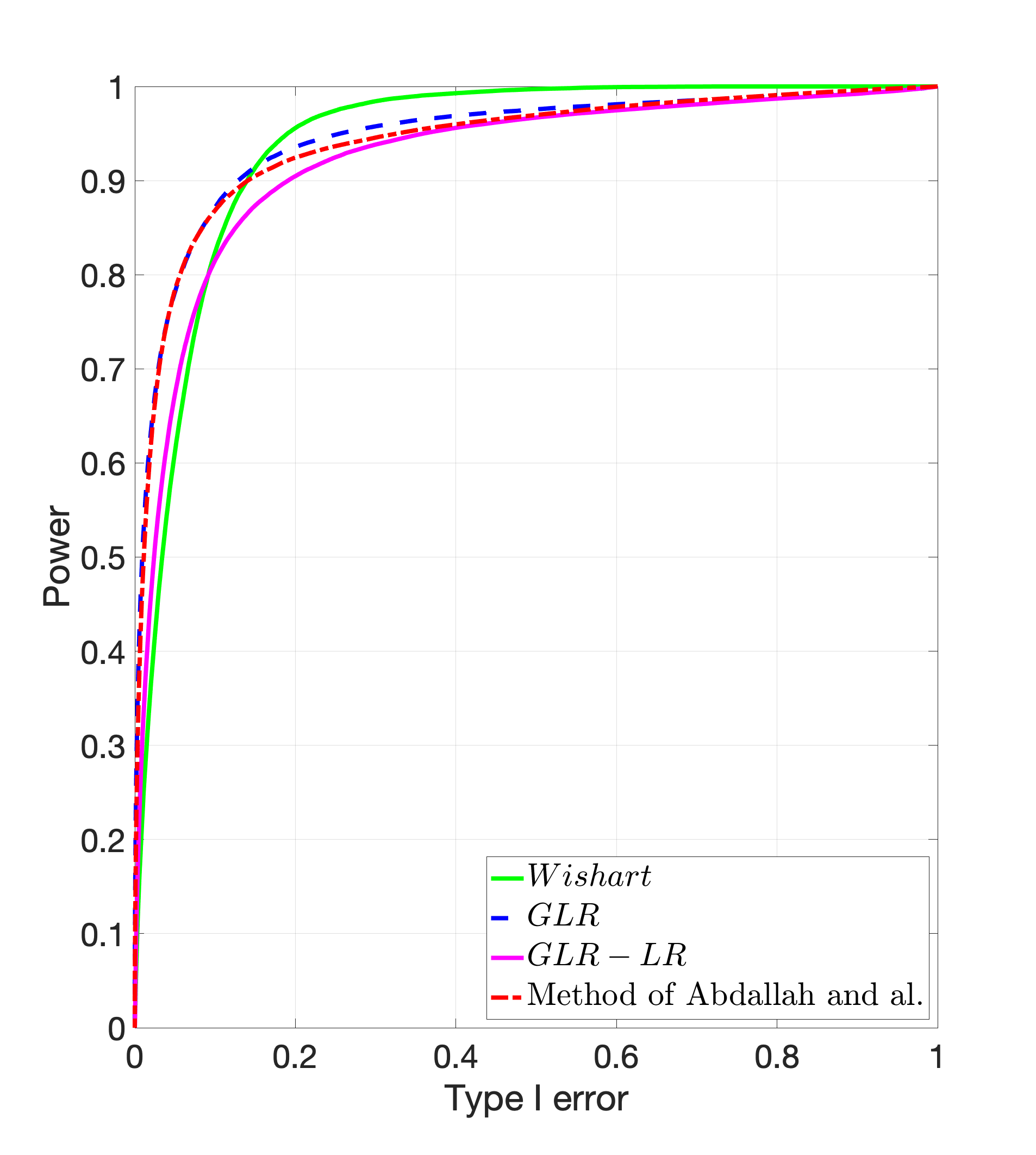}}
        \hfill
        \subfigure[Scene 2]{\includegraphics[scale=0.07]{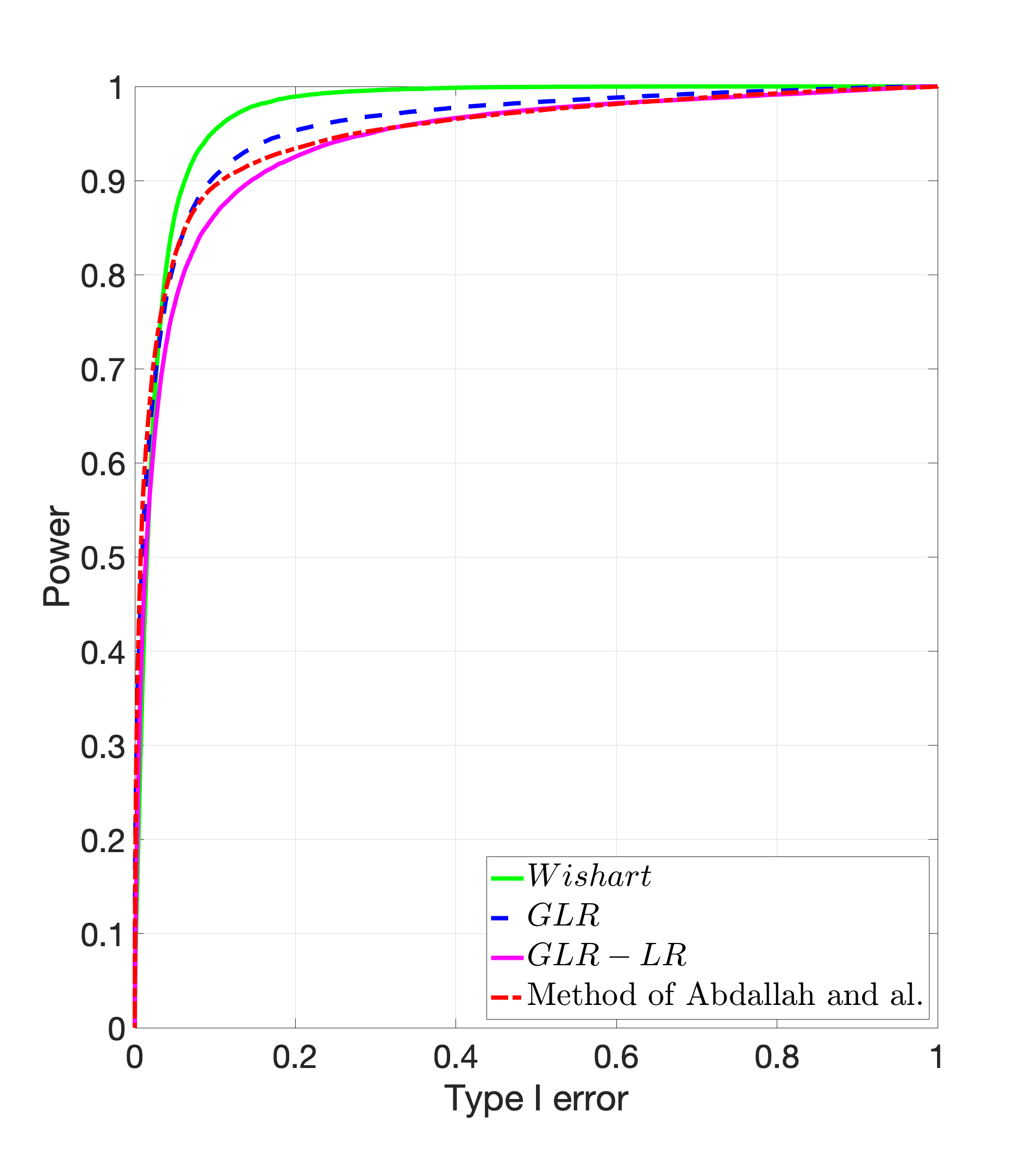}}
    }
    \caption{ROC plots for the two scenes}
    \label{fig:SAR_ROC}
\end{figure}
We observe some improvement of the proposed test statistic for type I errors greater than 15\% for the scene 1 or greater than 5\% for the scene 2. 
  
\section{Conclusion}
\label{section:Conclusion}

In this paper, the problem of covariance equality testing in low-rank Gaussian models has been studied. A new test statistic has been proposed, which is based on the asymptotic behaviour of the largest eigenvalues of certain Wishart matrices in the high-dimensional regime where the dimension of the observations and the number of samples both converge to infinity at the same rate. In particular, it is shown that the proposed statistic has a controlled type I error in the high-dimensional regime. 
Simulations on both synthetic and real datasets  have demonstrated that the proposed test statistic is relevant compared to other alternative approaches.

\appendix

\textit{Notations.} Throughout this Appendix, we use the following notations. For a sequence of random matrices $(\X_n)_{n \geq 1}$, $\X_n = o_{\Pbb}(1)$ denotes the convergence of $(\|\X_n\|_2)$ to 0 in probability, while $\X_n = \Ocal_{\Pbb}(1)$ denotes the tightness of $(\|\X_n\|_2)$, as $n\to\infty$, where $\|.\|_2$ stands for the spectral norm. If $\X$ is a random matrix, we denote by $\X^\circ = \X - \Ebb[\X]$.
Finally, $\Ccal^1(U)$ (resp. $\Ccal^{\infty}_c(U)$) denotes the set of continuously differentiable functions (resp. infinitely differentiable functions with compact support) defined on an open set $U$. 

\subsection{Useful results around the Marcenko-Pastur distribution}

\par In this section, we provide well-known results on the Stieltjes transform 
\begin{equation}
    m(z) = \int_{\Rbb} \frac{\drm \mu(\lambda)}{\lambda - z}, \quad z \in \Cbb\backslash\Rbb,
     \label{eq:st_mp}
\end{equation}
of the Marcenko-Pastur distribution $\mu$ with parameters $(\sigma^2,c)$ defined in \eqref{eq:Marchenko_Pastur_distribution}, having the interval $[x^-,x^+]$ as support with $x^{\pm} = \sigma^2(1\pm\sqrt{c})^2$, and which will be of constant use for the proofs of Theorems \ref{theorem:Spike_model_Limits}, \ref{theorem:CLT_lambda_R_Hat} and Corollary \ref{corollary:CLT_gamma} below.

We first recall that the limit $m(x) = \lim_{z \in \Cbb^+, z \to x} m(z)$ exists for all $x \in \Rbb \backslash\{x^-,x^+\}$, and that for all $z \in \Cbb\backslash\{x^-,x^+\}$, $m(z)$  satisfies the equation:
\begin{equation}
    m(z) = \frac{1+\sigma^2 c m(z)}{\sigma^2 - z \left(1+\sigma^2 c m(z)\right)} = \frac{w(z)}{z(\sigma^2 - w(z))},
    \label{eq:Canonical_Equation}
\end{equation}
with 
\begin{align}
    w(z) = z \left(1+\sigma^2 c m(z)\right).
    \label{def:w}
\end{align}
Moreover, $m$ is continuously differentiable on $\Rbb\backslash\{0,x^-,x^+\}$.
We now provide some results on the function $w$, which plays a central role in describing the behaviour of the largest eigenvalues of $\hat{\R}$. From \eqref{eq:Canonical_Equation}, we observe that for all $z \in \Cbb\backslash\{x^-,x^+\}$,
\begin{equation}
    \phi(w(z)) = z,
    \label{eq:Psi_w}
\end{equation}
where $\phi$ is defined as:
\begin{align}
    \phi(w) = w\left(1-\frac{\sigma^2 c}{\sigma^2 - w}\right).
    \label{eq:phi}
\end{align}
Function $\phi$ is increasing on $(-\infty, w^-) \cup (w^+,\infty)$ and decreasing on $(w^-,\sigma^2) \cup (\sigma^2,w^+)$, with $w^\pm = \sigma^2\left(1\pm\sqrt{c}\right)$ and $\phi(w^\pm) = x^\pm$. 

Next, we consider the following lemma (see \cite{Mestre2008a}) regarding $w$.
\begin{lemma}
    \label{lemma:w}
    For all $x \in \Rbb \backslash\{x^-,x^+\}$, $w(x) \in \phi^{-1}(\{x\})$. Moreover, among the preimages of $x$ under $\phi$, 
    \begin{itemize}
        \item if $x \in \left(x^-,x^+\right)$, $w(x)$ is the unique one such that $\Im(w(x)) > 0$;
        \item if $x \in \Rbb\backslash\left[x^-,x^+\right]$, $w(x)$ is real and is the unique preimage such that $\phi'(w(x)) > 0$. 
    \end{itemize}
    Finally, $z \in \Cbb\backslash\Rbb$ implies that $w(z) \in \Cbb\backslash\Rbb$.
\end{lemma}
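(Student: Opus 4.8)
The plan is to read everything off the identity $\phi(w(z))=z$ from \eqref{eq:Psi_w}, the known boundary behaviour of the Stieltjes transform $m$, and the monotonicity picture for $\phi$ recalled just before the lemma.

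First I would establish the membership $w(x)\in\phi^{-1}(\{x\})$ for real $x\notin\{x^-,x^+\}$. The boundary value $w(x)=\lim_{z\to x,\,z\in\Cbb^+}w(z)$ exists since $m(x)$ does, and $w(x)\ne\sigma^2$ (otherwise the second form of \eqref{eq:Canonical_Equation} would force $m(x)=\infty$; the point $x=0$, where $m$ may have a pole, is checked separately, there $w(0)\in\{0,\sigma^2(1-c)\}\ne\sigma^2$). Passing to the limit in \eqref{eq:Psi_w} and using continuity of $\phi$ away from its pole gives $\phi(w(x))=x$. For $x\in(x^-,x^+)$, the Stieltjes inversion formula together with the explicit density in \eqref{eq:Marchenko_Pastur_distribution} gives $\Im m(x)=\pi f(x)>0$, hence $\Im w(x)=x\sigma^2c\,\Im m(x)>0$ because $x>0$ on that interval; clearing denominators in $\phi(w)=x$ produces a monic quadratic in $w$ with real coefficients (as $\sigma^2,c,x$ are real), so its two roots are complex conjugates of one another, and $w(x)$ — having positive imaginary part — is the unique preimage with $\Im>0$.

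Next I would treat $x\in\Rbb\setminus[x^-,x^+]$. There $\mu$ carries no absolutely continuous mass near $x$, so $m$, and therefore $w(z)=z+\sigma^2 c\,z\,m(z)$ (the possible pole of $m$ at $0$ being absorbed by the prefactor $z$), extends real-analytically across each of the two components $(-\infty,x^-)$ and $(x^+,\infty)$; in particular $w(x)$ is real. To identify the branch I use that $w(x)$ solves $\phi(w)=x$, cannot equal $w^\pm$ (that would give $x\in\{x^-,x^+\}$) nor $\sigma^2$, and varies continuously, so it stays inside a single one of the four open monotonicity intervals of $\phi$. Comparing the ranges of $\phi$ on those intervals forces $w$ into $(\sigma^2,w^+)\cup(w^+,\infty)$ on $(x^+,\infty)$ and into $(-\infty,w^-)\cup(w^-,\sigma^2)$ on $(-\infty,x^-)$; then the asymptotics $z\,m(z)\to-1$ as $z\to\infty$ give $w(x)\to+\infty$ as $x\to+\infty$ and $w(x)\to-\infty$ as $x\to-\infty$, which by connectedness pins $w(x)$ to the unbounded interval $(w^+,\infty)$, resp. $(-\infty,w^-)$ — exactly the intervals on which $\phi'>0$. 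Finally, for $z\in\Cbb\setminus\Rbb$: if $w(z)$ were real (and it is never $\sigma^2$) then $\phi(w(z))$ would be real, contradicting $\phi(w(z))=z\notin\Rbb$, so $w(z)\in\Cbb\setminus\Rbb$.

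The main obstacle is the branch identification in the exterior case: ruling out that $w(x)$ sits on the decreasing branch $(\sigma^2,w^+)$ for $x>x^+$ is not a local matter and needs the continuity/connectedness argument anchored by the behaviour of $w$ at infinity, together with a little care at $x=0$, where $m$ may have a pole while $z\,m(z)$ stays analytic.
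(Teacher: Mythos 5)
The paper does not actually prove Lemma~\ref{lemma:w}; it simply cites it from the reference following the statement, so there is no in-paper proof to compare against. Your self-contained argument is, however, correct. Passing to boundary values in $\phi(w(z))=z$ (after ruling out $w(x)=\sigma^2$ via the relation $m(z)=w(z)/(z(\sigma^2-w(z)))$, and treating $x=0$ separately where $z\,m(z)$ stays bounded) gives the membership $w(x)\in\phi^{-1}(\{x\})$; the Stieltjes inversion argument plus the observation that $\phi(w)=x$ is a monic real quadratic correctly identifies $w(x)$ as the unique root with positive imaginary part on $(x^-,x^+)$; and the branch selection outside $[x^-,x^+]$ via connectedness of $w$ on each exterior interval, anchored by $w(z)\sim z-\sigma^2 c$ at infinity and the fact that $w$ never hits $\{w^-,\sigma^2,w^+\}$, correctly pins $w(x)$ to the two unbounded branches $(w^+,\infty)$ and $(-\infty,w^-)$, on which $\phi'=1-\sigma^4 c/(\sigma^2-w)^2>0$. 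The final implication that $z\notin\Rbb$ forces $w(z)\notin\Rbb$ is immediate since a real $w(z)\ne\sigma^2$ would make $\phi(w(z))=z$ real. Your global connectedness-at-infinity argument is indeed the crux of the exterior branch identification, and you are right that it cannot be established purely locally; the one place to be slightly more careful in a polished write-up is to state explicitly that for $c>1$ the pole of $m$ at $0$ lies inside $(-\infty,x^-)$ but is removable for $w$, so that $w$ remains continuous on the whole left component, which you do flag.
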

Let $\gamma \geq 0$. From Lemma \ref{lemma:w}, it is easily deduced that the equation $w(x) = \gamma + \sigma^2$ admits a solution iff $\gamma > \sigma^2 \sqrt{c}$, and that the solution is unique and given by 
\begin{align}
    x = \phi(\gamma+\sigma^2) = \frac{(\gamma+\sigma^2)(\gamma+\sigma^2 c)}{\gamma}.
\end{align}
Finally, we also have the following result giving various useful formulas. Define $\tilde{m}(z) = -\frac{1}{z(1+\sigma^2 c m(z))}$ and $\tau(z) = z m(z) \tilde{m}(z)$.
\begin{lemma}
    \label{lemma:formulas}
    If $\gamma > \sigma^2\sqrt{c}$, then we have
    \begin{align}
        m(\phi(\gamma+\sigma^2)) &= - \frac{1}{\gamma + \sigma^2 c}, 
        \\
        \tilde{m}(\phi(\gamma+\sigma^2)) &= - \frac{1}{\gamma+\sigma^2},
        \\
        m'(\phi(\gamma+\sigma^2)) &= \frac{\gamma^2}{(\gamma+\sigma^2 c)^2 (\gamma^2 - \sigma^4 c)},
        \\
        \tilde{m}'(\phi(\gamma+\sigma^2)) &= \frac{\gamma^2}{(\gamma+\sigma^2)^2 (\gamma^2 - \sigma^4 c)},
        \\
        \tau(\phi(\gamma+\sigma^2)) &= \frac{1}{\gamma}, 
        \\
        \tau'(\phi(\gamma+\sigma^2)) &= - \frac{1}{\gamma^2 - \sigma^4 c}.
    \end{align}
\end{lemma}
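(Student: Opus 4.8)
Fix $\gamma > \sigma^2\sqrt{c}$ and set $z_0 = \phi(\gamma+\sigma^2)$. The plan is to reduce every identity to algebraic manipulations of the single quantity $w(z_0)$. The first step is to pin down which preimage of $z_0$ under $\phi$ equals $w(z_0)$: from \eqref{eq:phi} one computes $\phi'(w) = 1 - \frac{\sigma^4 c}{(\sigma^2 - w)^2}$, hence $\phi'(\gamma+\sigma^2) = 1 - \frac{\sigma^4 c}{\gamma^2} = \frac{\gamma^2 - \sigma^4 c}{\gamma^2} > 0$ since $\gamma > \sigma^2\sqrt{c}$. Since $z_0 \in \Rbb\backslash[x^-,x^+]$, Lemma \ref{lemma:w} then forces $w(z_0) = \gamma + \sigma^2$ (the unique real preimage with positive $\phi'$), and from the formula already recorded before the lemma, $z_0 = \frac{(\gamma+\sigma^2)(\gamma+\sigma^2 c)}{\gamma}$. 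This is the only place the nontrivial input (Lemma \ref{lemma:w}) is used; everything else is elementary calculus.

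Next I would express $m$, $\tilde m$ and $w'$ in terms of $w$. From \eqref{def:w}, $1 + \sigma^2 c m(z) = w(z)/z$, so $m(z) = \frac{w(z) - z}{\sigma^2 c z}$; by definition $\tilde m(z) = -\frac{1}{z(1+\sigma^2 c m(z))} = -\frac{1}{w(z)}$; and differentiating \eqref{eq:Psi_w}, $\phi'(w(z))\,w'(z) = 1$, whence $w'(z) = \frac{1}{\phi'(w(z))}$. Substituting $w(z_0) = \gamma+\sigma^2$ (so that $\sigma^2 - w(z_0) = -\gamma$) gives $w'(z_0) = \frac{\gamma^2}{\gamma^2 - \sigma^4 c}$, while plugging $w(z_0) - z_0 = -\frac{\sigma^2 c(\gamma+\sigma^2)}{\gamma}$ into the formula for $m$ yields $m(z_0) = -\frac{1}{\gamma+\sigma^2 c}$, and $\tilde m(z_0) = -\frac{1}{\gamma+\sigma^2}$ is immediate. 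This disposes of the first two identities.

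For the remaining four, differentiate: $m'(z) = \frac{1}{\sigma^2 c}\cdot\frac{z\,w'(z) - w(z)}{z^2}$, $\tilde m'(z) = \frac{w'(z)}{w(z)^2}$, and $\tau'(z) = m(z)\tilde m(z) + z\,m'(z)\tilde m(z) + z\,m(z)\tilde m'(z)$ since $\tau = z m \tilde m$. Inserting the values $w(z_0) = \gamma+\sigma^2$, $w'(z_0) = \frac{\gamma^2}{\gamma^2-\sigma^4 c}$, $z_0 = \frac{(\gamma+\sigma^2)(\gamma+\sigma^2 c)}{\gamma}$, $m(z_0) = -\frac{1}{\gamma+\sigma^2 c}$, $\tilde m(z_0) = -\frac{1}{\gamma+\sigma^2}$ and simplifying gives $m'(\phi(\gamma+\sigma^2)) = \frac{\gamma^2}{(\gamma+\sigma^2 c)^2(\gamma^2-\sigma^4 c)}$, $\tilde m'(\phi(\gamma+\sigma^2)) = \frac{\gamma^2}{(\gamma+\sigma^2)^2(\gamma^2-\sigma^4 c)}$, $\tau(\phi(\gamma+\sigma^2)) = \frac{1}{\gamma}$, and finally $\tau'(\phi(\gamma+\sigma^2)) = -\frac{1}{\gamma^2-\sigma^4 c}$, the last one using the cancellation that the numerator of the combined fraction collapses to $-(\gamma+\sigma^2)(\gamma+\sigma^2 c)$. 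I do not expect any genuine obstacle here — the proof is a bookkeeping exercise — so the only point worth stating explicitly in the write-up is the identification $w(z_0) = \gamma+\sigma^2$ via Lemma \ref{lemma:w}, after which the rest is routine.
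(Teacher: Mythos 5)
Your proposal is correct, and since the paper leaves Lemma~\ref{lemma:formulas} unproved there is no in-text proof to compare against; the route you take—identify $w(\phi(\gamma+\sigma^2)) = \gamma+\sigma^2$ via Lemma~\ref{lemma:w}, then express $m$, $\tilde m$, $\tau$ and their derivatives in terms of $w$ and $w'=1/\phi'\circ w$—is the natural one and the one already implicit in the paper's discussion preceding the lemma. I verified all six identities, including the final cancellation for $\tau'$: with common denominator $(\gamma+\sigma^2)(\gamma+\sigma^2 c)(\gamma^2-\sigma^4 c)$, the numerator is $(\gamma^2-\sigma^4 c) - \gamma(\gamma+\sigma^2) - \gamma(\gamma+\sigma^2 c) = -(\gamma+\sigma^2)(\gamma+\sigma^2 c)$, as you claim. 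The only step you assert without derivation is that $z_0 = \phi(\gamma+\sigma^2)$ lies in $\Rbb\setminus[x^-,x^+]$; this follows because $\gamma+\sigma^2 > w^+ = \sigma^2(1+\sqrt c)$ (from $\gamma>\sigma^2\sqrt c$) and $\phi$ is increasing on $(w^+,\infty)$ with $\phi(w^+)=x^+$, hence $z_0 > x^+$. Adding that one line would make the write-up self-contained.
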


\subsection{Proof of Theorem \ref{theorem:Spike_model_Limits}}
\label{section:Proof_Theorem_Spike_Model_Limits}

This proof is based on techniques developed in \cite{Benaych-Georges2011a}.

\subsubsection{Some notations} Denote by $\e_1,\ldots,\e_N$ the column vectors of the standard basis of $\Cbb^N$, and let
\begin{align}
  \J_1 = \sum_{n=1}^{N_1} \e_n\e_n^*,
\end{align}
and for $\ell=2,\ldots,L$,
\begin{align}
  \J_{\ell} = \sum_{n=N_1+\ldots+N_{\ell-1}+1}^{N_1 + \ldots + N_{\ell}} \e_n\e_n^*.
\end{align}
We also consider the following eigendecomposition of $\Gammabs_{\ell}$
\begin{align}
  \Gammabs_{\ell} = \U_{\ell} \D_{\ell} \U_{\ell}^*,
\end{align}
with $\U_{\ell}$ a $M \times K$ isometric matrix and $\D_{\ell} = \diag\left(\lambda_1(\Gammabs_{\ell}),\ldots,\lambda_K(\Gammabs_{\ell})\right)$.

\subsubsection{Linearization} Let $\Y$ be the $M \times N$ matrix given by $\Y = \left[\y_{1,1},\ldots,\y_{N_1,1}, \ldots, \y_{1,L},\ldots,\y_{N_L,L}\right]$. Due to the Gaussian model, one can assume, without loss of generality, that
\begin{align}
  \Y = \Omegabs \S^* + \W,
\end{align}
where $\W$ is $M \times N$ matrix with i.i.d. $\Ncal_{\Cbb}(0,\sigma^2)$ entries and where
$\Omegabs = \left[\U_1 \D_1^{\frac{1}{2}},\ldots,\U_L \D_L^{\frac{1}{2}}\right]$
and $\S = \left[\S_1,\ldots,\S_L\right]$.
with $\S_{\ell} = \J_{\ell} \X$ and $\X$ a $N \times K$ matrix with i.i.d. $\Ncal_{\Cbb}(0,1)$ entries and independent of $\W$. In particular, we have $\hat{\R} = \frac{1}{N}\Y\Y^*$, and $\Y$ is a fixed rank (at most $KL$) perturbation of $\W$ so that from \textit{Weyl's} inequality and the classical results on the extreme eigenvalues of \textit{Wishart} matrices (see e.g. \cite{Geman1980}), it holds that
\begin{equation}
    \lambda_M(\hat{\R}) \xrightarrow[M\to\infty]{a.s.} x^-,
\end{equation}
while \textit{a.s.}
\begin{equation}
    \limsup_{M\to\infty} \lambda_{KL+1}\left(\hat{\R}\right)     
    \leq
    \limsup_{M\to\infty} \lambda_{1}\left(\frac{1}{N}\W\W^*\right)
    = x^+.
\end{equation}
To study the remaining eigenvalues of $\hat{\R}$, we use the linearization trick which consists in studying the following Hermitian block matrix
\begin{equation}
    \check{\Y} =
    \begin{bmatrix}
        \mathbf{0} & \frac{1}{\sqrt{N}} \Y
        \\
        \frac{1}{\sqrt{N}} \Y^* & \mathbf{0}
    \end{bmatrix},
\end{equation}
for which it is well-known that \cite[Th. 7.3.7]{horn2012matrix}
\begin{equation}
  \mathrm{sp}\left(\check{\Y}\right) = \left\{\pm \sqrt{\lambda_k\left(\hat{\R}\right)}\right\} \cup \{0\}.
\end{equation}

\subsubsection{Asymptotics for the characteristic polynomial of \texorpdfstring{$\check{\Y}$}{Y}} Obviously, we have:

\begin{equation}
    \check{\Y} = \B \check{\I} \B^* + \check{\W},   
\end{equation}
where $\B$, $\check{\I}$ and $\check{\W}$ are given by:
\begin{equation}
    \B =
    \begin{bmatrix}
        \mathbf{\Omega} & \mathbf{0}
        \\
        \mathbf{0} & \frac{1}{\sqrt{N}} \mathbf{S}
    \end{bmatrix}, 
    \check{\mathbf{I}} =
    \begin{bmatrix}
        \mathbf{0} & \mathbf{I}
        \\
        \mathbf{I} & \mathbf{0}
    \end{bmatrix},
    \check{\mathbf{W}} =
    \begin{bmatrix}
        \mathbf{0} & \frac{1}{\sqrt{N}} \mathbf{W}
        \\
        \frac{1}{\sqrt{N}} \mathbf{W}^* & \mathbf{0}
    \end{bmatrix}.
\end{equation}
Let $\epsilon > 0$ and let $\Dcal_{\epsilon}$ the $\epsilon$-neighborhood in $\Cbb$ of the set $\Dcal = \left[x^-,x^+\right]$. Let $\Kcal$ be a compact subset of $\Cbb \backslash \left(\Dcal_{\epsilon} \cup (-\infty,0)\right)$.  Then (see again \cite{Geman1980}), with probability one for all large $M$,
\begin{equation}
  \mathrm{sp}\left(\check{\W}\right) \cap \left\{\sqrt{z}: z \in \Kcal\right\} = \emptyset,
\end{equation}
and the following factorization
\begin{equation}
    \det\left(\check{\Y} - \sqrt{z} \I\right) = \det\left(\check{\W} - \sqrt{z} \I\right) \det\left(\check{\I}\right) \hat{P}(z),
    \label{eq:Determinant_P_z}
\end{equation}
where
\begin{equation}
    \hat{P}(z) = \det\left(\check{\I} + \hat{\mathbf{\Xi}}(z)\right),
\end{equation}
and $\hat{\mathbf{\Xi}}(z) = \B^*\left(\check{\W} - \sqrt{z}\I\right)^{-1}\B$, holds for all $z \in \Kcal$. Then from the block matrix inversion formula, we have
\begin{equation}
\label{eq:chi}
    \hat{\mathbf{\Xi}}(z)
    =
    \begin{bmatrix}
        \sqrt{z} \mathbf{\Omega}^* \mathbf{Q}(z) \mathbf{\Omega} & \frac{1}{N}\mathbf{\Omega}^*\mathbf{Q}(z)\mathbf{W}\mathbf{S}
        \\
        \frac{1}{N}\mathbf{S}^*\mathbf{W}^*\mathbf{Q}(z) \mathbf{\Omega} & \sqrt{z} \frac{1}{N} \mathbf{S}^*\tilde{\mathbf{Q}}(z)\mathbf{S}
    \end{bmatrix},
\end{equation}
where $\Q(z)$ and $\tilde{\Q}(z)$ are the resolvent matrices of $\frac{1}{N}\W\W^*$ and $\frac{1}{N}\W^*\W$ given by
\begin{equation}
    \Q(z) = \left(\frac{1}{N}\W\W^* - z \I\right)^{-1} \text{, }\tilde{\Q}(z) = \left(\frac{1}{N}\W^*\W - z \I\right)^{-1}.
\end{equation}
We then use the following result.
\begin{proposition}
  \label{proposition:quad_form}
  We have
  \begin{equation}
    \sup_{z \in \Kcal}\left\|\Omegabs^*\Q(z)\Omegabs - m(z) \Omegabs^*\Omegabs\right\|_2 \xrightarrow[M\to\infty]{a.s.} 0,
  \end{equation}
  as well as
  \begin{equation}
    \sup_{z \in \Kcal}\left\|\frac{1}{N}\S_{k}^*\tilde{\Q}(z)\S_{\ell} + \frac{\delta_{k-\ell} \frac{N_k}{N}}{z\left(1+\sigma^2 c m(z)\right)} \I\right\|_2 \xrightarrow[M\to\infty]{a.s.} 0,
  \end{equation}
  and
  \begin{equation}
    \sup_{z \in \Kcal} \left\|\frac{1}{N}\S_{k}^*\W^*\Q(z)\Omegabs\right\|_2 \xrightarrow[M\to\infty]{a.s.} 0.
  \end{equation}
\end{proposition}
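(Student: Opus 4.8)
The plan is to prove each of the three displays by first establishing the corresponding convergence for a \emph{fixed} $z$ (with an almost-sure exceptional set depending on $z$), and then upgrading to uniformity over $\Kcal$ by a normal-family argument. For the latter step, I would use the cited results on the extreme eigenvalues of Wishart matrices: almost surely, for all $M$ large enough, $\mathrm{sp}(\tfrac1N\W\W^*)$ avoids a fixed neighbourhood of $\Kcal$, so that $\sup_{z}\|\Q(z)\|_2$ and $\sup_{z}\|\tilde\Q(z)\|_2$ stay bounded on a slightly larger compact $\Kcal'\supset\Kcal$ in $\Cbb\backslash(\Dcal_\epsilon\cup(-\infty,0))$. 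The quantities on the left of the three displays are then analytic in $z$ and uniformly bounded on $\Kcal'$, hence form normal families, and almost sure convergence on a countable dense subset of $\Kcal$ together with Vitali's theorem yields uniform convergence on $\Kcal$. It therefore suffices to prove the pointwise almost sure limits, for which the two tools are (i) concentration of Gaussian quadratic/bilinear forms in the resolvents (Hanson--Wright, or the Gaussian Poincaré inequality together with higher-moment bounds and Borel--Cantelli), and (ii) the deterministic equivalents $\Ebb[\Q(z)]\approx m(z)\I_M$ and $\Ebb[\tilde\Q(z)]\approx\tilde m(z)\I_N$, where $\tilde m(z):=-\frac{1}{z(1+\sigma^2 c m(z))}$. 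Here (ii) is in fact exact up to the limit: since $\W$ has i.i.d.\ circular Gaussian entries, the law of $\tfrac1N\W\W^*$ is invariant under $\W\mapsto\U\W$ for every $M\times M$ unitary $\U$, which forces $\Ebb[\Q(z)]=\big(\Ebb[\tfrac1M\tr\Q(z)]\big)\I_M$, and $\Ebb[\tfrac1M\tr\Q(z)]\to m(z)$ by the Marcenko--Pastur theorem (aspect ratio $M/N\to c$); likewise $\Ebb[\tilde\Q(z)]=\big(\Ebb[\tfrac1N\tr\tilde\Q(z)]\big)\I_N$ and $\Ebb[\tfrac1N\tr\tilde\Q(z)]\to\tilde m(z)$.

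For the first display I would write $\Omegabs^*\Q(z)\Omegabs-m(z)\Omegabs^*\Omegabs=\big(\Omegabs^*\Q(z)\Omegabs-\Omegabs^*\Ebb[\Q(z)]\Omegabs\big)+\big(\Ebb[\tfrac1M\tr\Q(z)]-m(z)\big)\Omegabs^*\Omegabs$. The second term vanishes by (ii); the first is a matrix of fixed size $KL\times KL$, each entry of which is a centred Gaussian bilinear form $\u^*\Q(z)\v$ in $\W$, where $\u,\v$ are columns of $\Omegabs$ of bounded norm (bounded since $\lambda_k(\Gammabs_\ell)=\gamma_{k,\ell}+o(1)$), so (i) gives $\u^*\Q(z)\v-\Ebb[\u^*\Q(z)\v]\to0$ a.s., hence the whole matrix vanishes in spectral norm.

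For the second display I would condition on $\W$. Since $\S_k=\J_k\X$ with $\X$ independent of $\W$ and $\J_k\J_\ell=\delta_{k\ell}\J_k$, one has $\tfrac1N\S_k^*\tilde\Q(z)\S_\ell=\tfrac1N\X^*\J_k\tilde\Q(z)\J_\ell\X$ with conditional expectation $\tfrac{\delta_{k\ell}}{N}\tr\!\big(\J_k\tilde\Q(z)\big)\I_K$; the conditional fluctuation has variance of order $\tfrac1{N^2}\|\J_k\tilde\Q(z)\J_\ell\|_F^2=O(1/N)$ because $\|\tilde\Q(z)\|_2$ is bounded, so it vanishes a.s.; and $\tfrac1N\tr(\J_k\tilde\Q(z))$ concentrates around $\tfrac{N_k}{N}\Ebb[\tfrac1N\tr\tilde\Q(z)]\to\tfrac{c}{c_k}\tilde m(z)=-\frac{N_k/N}{z(1+\sigma^2 c m(z))}$ by (ii), using $N_k/N\to c/c_k$. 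The third display is handled identically: conditionally on $\W$, $\tfrac1N\S_k^*\W^*\Q(z)\Omegabs=\tfrac1N\X^*\big(\J_k\W^*\Q(z)\Omegabs\big)$ is linear in the centred Gaussian $\X$, hence has zero conditional mean, and its conditional variance is controlled by $\tfrac1{N^2}\|\W^*\Q(z)\Omegabs\|_F^2$; using $\W\W^*=N\big(z\I+\Q(z)^{-1}\big)$ one gets $\|\W^*\Q(z)\Omegabs\|_F^2=N\,\tr\!\big(\Omegabs^*(\Q(z)^*+z\Q(z)^*\Q(z))\Omegabs\big)=O(N)$, so the conditional variance is $O(1/N)$ and the term vanishes a.s.

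The main obstacle is the passage to uniformity over $\Kcal$: one must choose the almost-sure exceptional sets simultaneously for a dense set of $z$ and verify that the relevant random functions are uniformly bounded and analytic on a neighbourhood of $\Kcal$, which relies on the cited no-outlier results for $\tfrac1N\W\W^*$ and a Montel/Vitali argument. The concentration and deterministic-equivalent steps, by contrast, are routine once the block structure carried by $\J_k$ and $\S_k$ is tracked carefully.
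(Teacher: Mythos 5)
Your proposal is correct and is, in substance, exactly what the paper has in mind when it says the result "is obtained as a trivial modification of standard results in random matrix theory regarding quadratic forms of resolvents of standard Wishart matrices" and refers to Hachem (2012, Sec.\ 5.5). The three ingredients you identify — (i) almost sure pointwise concentration of Gaussian quadratic/bilinear forms in the resolvents $\Q$, $\tilde\Q$ via Poincar\'e/Hanson--Wright plus Borel--Cantelli, (ii) the exact rotational-invariance identities $\Ebb[\Q(z)]=\bigl(\Ebb[\tfrac1M\tr\Q(z)]\bigr)\I_M$ and $\Ebb[\tilde\Q(z)]=\bigl(\Ebb[\tfrac1N\tr\tilde\Q(z)]\bigr)\I_N$ combined with the Marcenko--Pastur convergence of the normalized traces, and (iii) the normal-family/Vitali argument to go from a dense set of $z$ to uniformity over $\Kcal$ using the a.s.\ no-outlier localization for $\tfrac1N\W\W^*$ — are precisely the standard route. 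The conditioning on $\W$ for the $\S$-blocks, exploiting $\J_k\J_\ell=\delta_{k\ell}\J_k$ so that the conditional mean of $\tfrac1N\S_k^*\tilde\Q\S_\ell$ is $\tfrac{\delta_{k\ell}}{N}\tr(\J_k\tilde\Q)\I_K$ and then passing through $\Ebb[\tilde\Q]\propto\I_N$ to get $\tfrac{N_k}{N}\tilde m(z)\I_K$, is exactly the kind of bookkeeping the paper is implicitly invoking, and your variance bounds (using $\W\W^*=N(z\I+\Q^{-1})$ for the last display) are sound. No gap.
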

\begin{proof}
    Proposition \ref{proposition:quad_form} is obtained as a trivial modification of standard results in random matrix theory regarding quadratic forms of resolvents of standard Wishart matrices (see e.g. \cite[Sec. 5.5]{Hachem2012}) and the proof is therefore omitted.
\end{proof}
Using Proposition \ref{proposition:quad_form}, we deduce that:
\begin{equation}
    \sup_{z \in \Kcal} \left\|\hat{\Xibs}(z) - \Xibs(z)\right\|_2 \xrightarrow[M\to\infty]{a.s.} 0,
\end{equation}
where
\begin{equation}
    \Xibs(z) = 
    \begin{bmatrix}
        \sqrt{z} m(z) \Omegabs^*\Omegabs & \mathbf{0}
        \\
        \mathbf{0} & \A(z)
    \end{bmatrix},
\end{equation}
with $\A(z)$ the $KL \times KL$ block diagonal matrix given by
\begin{equation}
  \A(z) =
  - \frac{1}{\sqrt{z}\left(1+\sigma^2 c m(z)\right)}
  \begin{bmatrix}
    \frac{N_1}{N} \I & &
    \\
    & \ddots & &
    \\
    & & \frac{N_L}{N} \I
  \end{bmatrix}.
\end{equation}
It is straightforward to check that
\begin{align}
 \det\left(\check{\I} + \Xibs(z)\right)
  &= \det\left(\sqrt{z} m(z) \Omegabs^*\Omegabs \A(z) - \I\right)
  \notag\\
  &= \det\left(-\frac{m(z)}{1+\sigma^2 c m(z)} \Gammabs - \I\right),
\end{align}
where $\Gammabs$ is defined in \eqref{eq:Gamma}. Consequently, from Assumption \ref{assumption:Convergence_gamma}, one has
\begin{equation}
    \sup_{z \in \Kcal} \left|\hat{P}(z) - P(z)\right| \xrightarrow[N\to\infty]{a.s.} 0,
    \label{eq:Convergence_Phat}
\end{equation}
where $P(z) = \prod_{k=1}^{KL} \left(-\frac{m(z)}{1+\sigma^2 c m(z)} \gamma_k - 1\right)$.
Using the equation \eqref{eq:Canonical_Equation}, one can rewrite $P(z)$ as $P(z) = \prod_{k=1}^{KL} \left(\frac{\gamma_k}{w(z)-\sigma^2} - 1\right)$, with $w$ defined in \eqref{def:w}.

\subsubsection{Spectrum of \texorpdfstring{$\hat{\mathbf{R}}$}{R}} Using Lemma \ref{lemma:w} and the discussion below, we immediately obtain that the set of zeros of $P$ is given by
\begin{equation}
    \Zcal = \left\{\phi(\gamma_k+\sigma^2) : k=1,\ldots,KL, \ \gamma_k > \sigma^2\sqrt{c}\right\}.
\end{equation}
Moreover, Lemma \ref{lemma:w} also implies that $P$ has no pole and thus $P$ is holomorphic on $\Cbb \backslash [x^-,x^+]$. 

Let $Q = |\Zcal|$ and denote by $x_1 > \ldots > x_Q$ the elements of $\Zcal$. We also set $\epsilon > 0$ small enough such that
\begin{equation}
    \Dcal_{\epsilon} \cap \bigcap_{q=1}^Q [x_q-\epsilon,x_q+\epsilon] = \emptyset.
\end{equation}
For all $q =1,\ldots,Q$, let $\Ccal_q$ be a continuously differentiable simple closed curve intersecting the real axis only at the two points $x_q \pm \epsilon$ and enclosing $x_q$ so that $\Ccal_q$ is a compact subset of $\Cbb \backslash \left(\Dcal_{\epsilon} \cup (-\infty,0)\right)$. Applying \eqref{eq:Convergence_Phat} with $\Kcal = \Ccal_q$ provides that with probability one for all large $M$,
\begin{equation}
    \left|\hat{P}(z) - P(z)\right| < \left|P(z)\right|,
\end{equation}
for all $z \in \Ccal_q$, with both $\hat{P}$ and $P$ being holomorphic on any open set enclosed by $\Ccal_q$. Thus, for all $q=1,\ldots,Q$, we deduce from Rouch\'e's Theorem that $\hat{P}$ admits a unique zero in the interval $[x_q-\epsilon, x_q+\epsilon]$. With a similar reasoning, $\hat{P}$ does not have any zero in $(0, x^- - \epsilon) \cup (x_1 + \epsilon , \infty)$ with probability one for all large $M$. Therefore, getting back to \eqref{eq:Determinant_P_z} and since $\epsilon$ can be made arbitrarily small, it follows that
\begin{equation}
    \lambda_k\left(\hat{\R}\right) \xrightarrow[M\to\infty]{a.s.} \phi(\gamma_k+\sigma^2) = \frac{(\gamma_k + \sigma^2)(\gamma_k + \sigma^2 c)}{\gamma_k},
\end{equation}
for all $k=1,\ldots,KL$ such that $\gamma_k > \sigma^2 \sqrt{c}$. Moreover, with probability one for all large $M$, $\lambda_k(\hat{\R}) \in \Dcal_{\epsilon}$ for all $k=1,\ldots,KL$ such that $\gamma_k \leq \sigma^2 \sqrt{c}$. Since the empirical spectral distribution $\hat{\mu}$ of $\hat{\R}$ converges a.s. to the Marcenko-Pastur distribution as $M\to\infty$,
this further implies that for all $k=1,\ldots,KL$ such that $\gamma_k \leq \sigma^2 \sqrt{c}$, $\lambda_k\left(\hat{\R}\right) \xrightarrow[M\to\infty]{a.s.} x^+$ and similarly $\lambda_{KL+1}\left(\hat{\R}\right) \xrightarrow[M\to\infty]{a.s.} x^+$.

\subsection{{Proof of Theorem \ref{theorem:CLT_lambda_R_Hat}}}
\label{section:Proof_Theorem_CLT_lambda_R_Hat}

\subsubsection{Some notations.} Let us first recall that under hypothesis $\Hcal_0$, we have $\Gammabs_1 = \ldots = \Gammabs_{L} = \Gammabs$, and we denote by $\Gammabs = \U\D\U^*$ its eigendecomposition with $\U$ a $M\times K$ isometric matrix and $\D = \diag\left(\lambda_1(\Gammabs),\ldots,\lambda_K(\Gammabs)\right)$. To unify some notations, define as in the previous section $\Y_{\ell} = [\y_{1,\ell},\ldots,\y_{N_{\ell},\ell}]$ for all $\ell=1,\ldots,L$ so that we have
\begin{align}
    \Y_{\ell} = \Omegabs\S_{\ell}^* + \W_{\ell},
\end{align}
where $\Omegabs = \U\D^{1/2}$, $\S_{1},\ldots,\S_L$ are independent matrices such that $\S_{\ell} = [\s_{1,\ell},\ldots,\s_{K,\ell}]$ is $N_{\ell} \times K$ with i.i.d. $\Ncal_{\Cbb}(0,1)$ entries, and where $\W_1,\ldots,\W_L$ are independent matrices with $\W_{\ell}$ having i.i.d. $\Ncal_{\Cbb}(0,\sigma^2)$ entries. We also define $\Y_0 = [\Y_1,\ldots,\Y_L]$ so that
\begin{align}
    \Y_0 = \Omegabs \S_0^* + \W_0,
\end{align}
with $\S_0 = [\S_1^*,\ldots,\S_L^*]^*$ and $\W_0 = [\W_1,\ldots,\W_L]$, and write $N_0 = N_1 + \ldots + N_L$, so that $\hat{\R}_0 = \frac{\Y_0\Y_0^*}{N_0} = \hat{\R}$. Moreover, let $c_0 = c = (\frac{1}{c_1} + \ldots + \frac{1}{c_L})^{-1}$ and
\begin{equation}
    a = \sigma^2\min_{\ell=0,\ldots,L} \left(1-\sqrt{c_{\ell}}\right)^2,
    \quad
    b = \sigma^2
    \max_{\ell=0,\ldots,L} \left(1+\sqrt{c_{\ell}}\right)^2,
\end{equation}
and consider $\varphi \in \Ccal_c^{\infty}(\Rbb)$ such that $\supp(\varphi) = \left[a-\epsilon,b+\epsilon\right]$ and $\varphi(t) = 1$ for all $t \in \left[a-\frac{\epsilon}{2},b+\frac{\epsilon}{2}\right]$, where $\epsilon < a$ . The following quantity defined as
\begin{equation}
    \chi = \prod_{\ell=0}^L \det \varphi\left(\frac{\W_{\ell}\W_{\ell}^*}{N_{\ell}}\right),
    \label{def:chi}
\end{equation}
verifies $\chi = 1$ with probability 1 for all large $M$ from the classical results on the localization of the eigenvalues of \textit{Wishart} matrices \cite{Geman1980}. Recall also the definition of $m$ and $w$ in \eqref{eq:st_mp} and \eqref{def:w} respectively and denote for all $\ell=0,\ldots,L$ by $m_{\ell}$ the \textit{Stieltjes} transform of the \textit{Marcenko-Pastur} distribution with parameter $(c_{\ell},\sigma^2)$, as well as for all $z \in \Cbb\backslash [x_{\ell}^-,x_{\ell}^+]$
\begin{align}
    w_{\ell}(z) &= z \left(1+\sigma^2 c_{\ell} m_{\ell}(z)\right),
    \\
    \tilde{m}_{\ell}(z) &= - \frac{1}{z(1+\sigma^2 c_{\ell} m_{\ell}(z))},
    \\
    \tau_{\ell}(z) &= z m_{\ell}(z) \tilde{m}(z),
\end{align}
with $x_{\ell}^{\pm} = \sigma^2\left(1 \pm \sqrt{c_{\ell}}\right)^2$.

\subsubsection{Characteristic Polynomials Approximation} 

The first step of the proof consists in using the trick from \cite{Benaych-Georges2012} whose main idea is to relate the cumulative distribution function of the spiked eigenvalues with the determinant of certain random matrices.

Using Theorem \ref{theorem:Spike_model_Limits} and the same arguments used to obtain the factorization \eqref{eq:Determinant_P_z} and \eqref{eq:chi} in Appendix \ref{section:Proof_Theorem_Spike_Model_Limits}, we have that $\lambda_1(\hat{\R}_\ell),\ldots,\lambda_K(\hat{\R}_{\ell})$ are the zeros of
\begin{equation}
    \hat{P}_\ell(z) = \det\left(\check{\I} + \hat{\Xibs}_{\ell}(z)\right),
\end{equation}
for all $\ell \in \{0, \dots, L\}$, with probability one for all large $M$, with 
\begin{equation}
    \hat{\Xibs}_{\ell}(z) = 
    \begin{bmatrix}
        \sqrt{z} \Omegabs^* \Q_{\ell}(z) \Omegabs \chi & \frac{1}{N_{\ell}} \Omegabs^* \Q_{\ell}(z) \W_{\ell} \S_{\ell} \chi
        \\
        \frac{1}{N_{\ell}} \S_{\ell}^*\W_{\ell}^*\Q_{\ell}(z) \Omegabs \chi & \sqrt{z} \frac{1}{N_{\ell}} \S_{\ell}^* \tilde{\Q}_{\ell}(z) \S_{\ell} \chi
    \end{bmatrix},
\end{equation}
where $\Q_{\ell}(z) = \left(\frac{\W_{\ell} \W_{\ell}^*}{N_{\ell}} -  z \I\right)^{-1}$
and $\tilde{\Q}_{\ell}(z) = \left(\frac{\W_{\ell}^* \W_{\ell}}{N_{\ell}} -  z \I\right)^{-1}$. For all $\ell,k$, let $-\infty < x_{k,\ell} < y_{k,\ell} < +\infty$ and denote
\begin{align}
    \rho_{k,\ell} = \frac{(\gamma_k + \sigma^2)(\gamma_k+\sigma^2 c_{\ell})}{\gamma_k}.
\end{align}
Then with probability one for all large $M$, we have
\begin{align}
    \sqrt{M} \left(\lambda_k(\hat{\R}_{\ell}) - \rho_{k,\ell}\right)& \in [x_{k,\ell},y_{k,\ell}]
    \notag\\
    \Leftrightarrow 
    \hat{P}_{\ell}\left(\rho_{k,\ell} + \frac{x_{k,\ell}}{\sqrt{M}}\right)&
    \hat{P}_{\ell}\left(\rho_{k,\ell} + \frac{y_{k,\ell}}{\sqrt{M}}\right)
    < 0.
\end{align}
Therefore, as $M \to \infty$,
\begin{align}
    &\Pbb\left( \bigcap_{k=1}^K \bigcap_{\ell=0}^L \left\{\sqrt{M} \left(\lambda_k(\hat{\R}_{\ell}) - \rho_{k,\ell}\right) \in [x_{k,\ell},y_{k,\ell}]\right\}\right)
    = \notag\\
    & \Pbb\left( \bigcap_{k=1}^K\bigcap_{\ell=0}^L \left\{\hat{P}_{\ell}\left(\rho_{k,\ell} + \frac{x_{k,\ell}}{\sqrt{M}}\right)
    \hat{P}_{\ell}\left(\rho_{k,\ell} + \frac{y_{k,\ell}}{\sqrt{M}}\right)
    < 0\right\}\right)\notag\\
    &\quad + o(1).
    \label{eq:prod_det}
\end{align}
The following proposition provides the expansion of $\hat{P}_{\ell}\left(\rho_{k,\ell} + \frac{x}{\sqrt{M}}\right)$ around $\rho_{k,\ell}$.
\begin{proposition}
\label{prop:det}
 For all $x \in  \Rbb$,
 % \vspace{-0.25cm}
  \begin{align}
    &\hat{P}_{\ell}\left(\rho_{k,\ell} + \frac{x}{\sqrt{M}}\right)
    = \frac{1}{\sqrt{M}} \prod_{i \neq k} \left(\gamma_i \tau_{\ell}(\rho_{k,\ell}) -1 \right)
    \notag\\
    & \times
    \Biggl(
        x \gamma_k \tau'_{\ell}(\rho_{k,\ell}) - 2 \sqrt{\gamma_k}\Re\left(\eta_{3,k,\ell}\right)
        +  \gamma_k\rho_{k,\ell} \tilde{m}_{\ell}(\rho_{k,\ell}) \eta_{1,k,\ell}
        \notag\\
        & \qquad\qquad + \gamma_k \rho_{k,\ell} m_{\ell}(\rho_{k,\ell}) \eta_{2,k,\ell})
        \Biggr)
      + o_{\Pbb}\left(\frac{1}{\sqrt{M}}\right),
        \label{eq:dev_pol_car}
\end{align}
where $\tau_{\ell}(z) = z m_{\ell}(z) \tilde{m}_{\ell}(z)$ and
\begin{align}
    \eta_{1,k,\ell} &= \sqrt{M}\u_k^* \left(\Q_{\ell}(\rho_{k,\ell}) \chi\right)^\circ \u_k,
    \\
    \eta_{2,k,\ell} &= \frac{\sqrt{M}}{N_{\ell}} \left(\s_{k,\ell}^* \tilde{\Q}_{\ell}(\rho_{k,\ell}) \s_{k,\ell}\chi\right)^\circ,
    \\
    \eta_{3,k,\ell} &= \frac{\sqrt{M}}{N_{\ell}} \u_k^* \Q_{\ell}(\rho_{k,\ell})\W_{\ell}\s_{k,\ell} \chi.
\end{align}
\end{proposition}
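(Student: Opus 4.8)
The plan is to obtain \eqref{eq:dev_pol_car} by a first-order perturbation expansion of $\hat{P}_{\ell}(z) = \det(\check{\I} + \hat{\Xibs}_{\ell}(z))$ about its asymptotic root $\rho_{k,\ell}$, keeping careful track of the $\sqrt{M}$-scale random fluctuations of $\hat{\Xibs}_{\ell}$. First I would analyze the deterministic limit $\mathbf{T}_{\ell}(z) \vcentcolon= \check{\I} + \Xibs_{\ell}(z)$, where, exactly as in the proof of Theorem \ref{theorem:Spike_model_Limits} (using $\gamma_{k,\ell}=\gamma_k$ under $\Hcal_0$ together with Assumption \ref{assumption:Convergence_gamma}), $\Xibs_{\ell}(z) = \bdiag(\sqrt{z}\,m_{\ell}(z)\D,\ \sqrt{z}\,\tilde{m}_{\ell}(z)\I)$ with $\D = \diag(\gamma_1,\ldots,\gamma_K)$. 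After the permutation pairing the $j$-th coordinate of the top block with the $j$-th coordinate of the bottom block, $\mathbf{T}_{\ell}(z)$ becomes block-diagonal with $2\times 2$ blocks $\M_j(z) = \left(\begin{smallmatrix}\sqrt{z}\,m_{\ell}(z)\gamma_j & 1 \\ 1 & \sqrt{z}\,\tilde{m}_{\ell}(z)\end{smallmatrix}\right)$, so $\det\mathbf{T}_{\ell}(z) = \prod_{j}(\gamma_j\tau_{\ell}(z)-1)$. By Lemma \ref{lemma:formulas} one has $\gamma_k\tau_{\ell}(\rho_{k,\ell}) = 1$, while the separation hypothesis together with the monotonicity of $\tau_{\ell}$ off the support give $\gamma_j\tau_{\ell}(\rho_{k,\ell})\neq 1$ for $j\neq k$; hence $\rho_{k,\ell}$ is a simple zero of $\det\mathbf{T}_{\ell}$, and $\adj(\mathbf{T}_{\ell}(\rho_{k,\ell}))$ is supported on the $k$-th block, equal there to $\big(\prod_{j\neq k}(\gamma_j\tau_{\ell}(\rho_{k,\ell})-1)\big)\,\adj(\M_k(\rho_{k,\ell}))$ with $\adj(\M_k(\rho_{k,\ell})) = \left(\begin{smallmatrix}\sqrt{\rho_{k,\ell}}\,\tilde{m}_{\ell}(\rho_{k,\ell}) & -1 \\ -1 & \sqrt{\rho_{k,\ell}}\,m_{\ell}(\rho_{k,\ell})\gamma_k\end{smallmatrix}\right)$.

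Next I would establish the refined expansion, uniform for $x$ in a compact set,
\[
\check{\I} + \hat{\Xibs}_{\ell}\!\Big(\rho_{k,\ell} + \tfrac{x}{\sqrt{M}}\Big) = \mathbf{T}_{\ell}(\rho_{k,\ell}) + \tfrac{1}{\sqrt{M}}\big(x\,\mathbf{T}_{\ell}'(\rho_{k,\ell}) + \mathbf{F}_{k,\ell}\big) + o_{\Pbb}\!\big(\tfrac{1}{\sqrt{M}}\big),
\]
where $\mathbf{F}_{k,\ell}$ is a random matrix, bounded in probability, whose only non-negligible entries lie in the $k$-th $2\times2$ block and equal (in the permuted order) $\sqrt{\rho_{k,\ell}}\,\gamma_k\,\eta_{1,k,\ell}$ in the top-left slot (from the centered $(k,k)$ entry of $\sqrt{z}\,\Omegabs^*\Q_{\ell}(z)\Omegabs\chi$, recalling $\Omegabs=\U\D^{1/2}$), $\sqrt{\rho_{k,\ell}}\,\eta_{2,k,\ell}$ in the bottom-right slot, and $\sqrt{\gamma_k}\,\eta_{3,k,\ell}$, $\sqrt{\gamma_k}\,\overline{\eta_{3,k,\ell}}$ in the two off-diagonal slots. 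This rests on sharpening Proposition \ref{proposition:quad_form} to the $o(1/\sqrt{M})$ level for the relevant bilinear forms of the Wishart resolvents $\Q_{\ell},\tilde{\Q}_{\ell}$ (so the deterministic equivalents are accurate past the fluctuation scale), on checking that every entry of $\hat{\Xibs}_{\ell}$ off the $k$-th diagonal block fluctuates only at order $O_{\Pbb}(1/\sqrt{M})$ in norm, and on noting that the $z$-dependence of the rescaled fluctuation over an $O(1/\sqrt{M})$ window is negligible; all of this follows from standard resolvent estimates for Wishart matrices, the multiplier $\chi$ (which equals $1$ a.s.\ for all large $M$) providing the uniform integrability needed for the bias control.

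Finally I would apply Jacobi's formula $\det(\mathbf{T}+\varepsilon\M) = \det\mathbf{T} + \varepsilon\,\Tr(\adj(\mathbf{T})\M) + O(\varepsilon^2\|\M\|_2^2)$ with $\mathbf{T}=\mathbf{T}_{\ell}(\rho_{k,\ell})$, $\varepsilon = 1/\sqrt{M}$ and $\M = x\,\mathbf{T}_{\ell}'(\rho_{k,\ell}) + \mathbf{F}_{k,\ell}$. Since $\det\mathbf{T}_{\ell}(\rho_{k,\ell})=0$ and $\|\M\|_2 = O_{\Pbb}(1)$, the remainder is $o_{\Pbb}(1/\sqrt{M})$, and by the first step the leading term factors as $\tfrac{1}{\sqrt{M}}\prod_{j\neq k}(\gamma_j\tau_{\ell}(\rho_{k,\ell})-1)\cdot\Tr(\adj(\M_k(\rho_{k,\ell}))\,[\M]_k)$ with $[\M]_k$ the $k$-th $2\times2$ block. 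The $x\,\mathbf{T}_{\ell}'$ part contributes $x\,\Tr(\adj(\M_k(\rho_{k,\ell}))\,\M_k'(\rho_{k,\ell})) = x\,(\det\M_k)'(\rho_{k,\ell}) = x\,\gamma_k\tau_{\ell}'(\rho_{k,\ell})$, once more by Jacobi's formula applied to the $2\times 2$ block; and contracting the explicit adjugate above with the fluctuation block gives $\gamma_k\rho_{k,\ell}\,\tilde{m}_{\ell}(\rho_{k,\ell})\,\eta_{1,k,\ell} + \gamma_k\rho_{k,\ell}\,m_{\ell}(\rho_{k,\ell})\,\eta_{2,k,\ell} - 2\sqrt{\gamma_k}\,\Re(\eta_{3,k,\ell})$. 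Adding the two contributions reproduces the bracket in \eqref{eq:dev_pol_car}.

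I expect the main obstacle to be the second step: obtaining the $o(1/\sqrt{M})$-accurate deterministic equivalents of the resolvent bilinear forms and, above all, the bookkeeping that certifies that no random quantity other than $\eta_{1,k,\ell}$, $\eta_{2,k,\ell}$, $\eta_{3,k,\ell}$ survives at the $\sqrt{M}$-scale --- in particular that the coupling blocks between the $\Omegabs$-directions and the $\S_{\ell}$-directions produce exactly the $\eta_{3,k,\ell}$ terms and nothing else. Once this is in place, the first and third steps are deterministic manipulations of the $2\times 2$ block structure combined with the identities of Lemma \ref{lemma:formulas}.
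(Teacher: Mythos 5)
Your proposal is correct and yields exactly the bracket in \eqref{eq:dev_pol_car}, but it reaches it by a genuinely different route. You permute the $M\times K$/$K\times N_\ell$ coordinates so that the deterministic matrix $\mathbf{T}_\ell(z)=\check{\I}+\Xibs_\ell(z)$ becomes block-diagonal with $2\times 2$ blocks $\M_j(z)$, observe that $\det\M_j(z)=\gamma_j\tau_\ell(z)-1$ so that $\rho_{k,\ell}$ is a simple zero of $\det\mathbf{T}_\ell$, and apply Jacobi's formula $\det(\mathbf{T}+\varepsilon\M)=\det\mathbf{T}+\varepsilon\Tr(\adj(\mathbf{T})\M)+O(\varepsilon^2\|\M\|^2)$ once, with $\adj(\mathbf{T}_\ell(\rho_{k,\ell}))$ supported on the $k$-th $2\times 2$ block. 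Contracting the explicit $2\times 2$ adjugate against the fluctuation block then gives the three $\eta$-terms, and the part coming from $x\,\mathbf{T}'_\ell$ is simply $x\,(\det\M_k)'=x\,\gamma_k\tau'_\ell$. The paper instead first applies the Schur complement/block-determinant factorization of $\hat{P}_\ell$ along the $(\Omegabs,\S_\ell)$ block partition, then expands the inverse in the Schur complement, and only afterwards invokes the differential of the determinant (comatrix) on the resulting $K\times K$ matrix at its simple zero. The two arguments use the same ingredients (Proposition~\ref{proposition:quad_form} sharpened to $o_\Pbb(1/\sqrt{M})$ accuracy for the resolvent bilinear forms, the regularizer $\chi$, and the identities of Lemma~\ref{lemma:formulas}), but your $2\times 2$-block/Jacobi route avoids the step-by-step expansion of the Schur complement and makes visibly explicit why only the $(k,k)$-paired entries survive: $\adj(\mathbf{T}_\ell(\rho_{k,\ell}))$ annihilates every other block. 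The one thing you rightly flag as requiring care (and which the paper also relies on implicitly) is upgrading Proposition~\ref{proposition:quad_form} from an almost-sure $o(1)$ statement to an $o_\Pbb(1/\sqrt{M})$-accurate deterministic equivalent, so that the residual bias is absorbed in the error term; for the Gaussian model this follows from standard variance/bias bounds on Wishart resolvent bilinear forms together with the regularization by $\chi$.
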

\begin{proof} 
    The proof is deferred to Appendix \ref{section:proof_lemma_det}.
\end{proof}
From \eqref{eq:prod_det} and Proposition \ref{prop:det}, it is clear that we have to study a CLT for the following generic quantity
\begin{equation}
  \eta =
  \sum_{\ell=0}^{L} \sum_{k=1}^K \left(\beta_{1,k,\ell} \eta_{1,k,\ell} + \beta_{2,k,\ell} \eta_{2,k,\ell} + \Re\left(\overline{\beta_{3,k,\ell}}  \eta_{3,k,\ell}\right)\right),
  \label{eq:def_eta}
\end{equation}
where $\left(\beta_{i,k,\ell}\right)_{\substack{i = 1,2 \\ k=1,\ldots,K \\ \ell = 0,\ldots,L}} \in \Rbb^{2K(L+1)}$ and $\left(\beta_{3,k,\ell}\right)_{\substack{k=1,\ldots,K \\ \ell = 0,\ldots,L}} \in \Cbb^{K(L+1)}$.

\subsubsection{{Central Limit Theorem}} Let us consider the characteristic function $\Psi(u) = \mathbb{E}\left[\xi(u)\right]$, with $\xi(u) = \exp\left(\mathrm{i} u \eta\right)$. Our approach consists in deriving a perturbed differential equation for $\Psi$ as shown in the following proposition.
Let $\mathrm{\bdiag}()$ denotes the block diagonal operator. Define $\K = \bdiag\left(\K_1,\ldots,\K_K\right)$ with $\K_k = \bdiag(\K_{1,k},\ldots,\K_{4,k})$ and where $(\K_{i,k})_{i=1,\ldots,4}$ are $(L+1)\times(L+1)$ symmetric matrices with entries given by
\begin{align}
    &[\mathbf{K}_{1, k}]_{\ell+1,\ell'+1} = 
    \notag\\
    &\begin{cases}
        \frac{\sigma^4 c_{\ell}}{(\gamma_k + \sigma^2 c_{\ell})^2 (\gamma_k^2 - \sigma^4 c_{\ell})} & \text{if } \ell = \ell'
        \\[10pt]
        \frac{\sigma^4 c_0}{(\gamma_k + \sigma^2 c_0)(\gamma_k + \sigma^2 c_{\ell'}) (\gamma_k^2 - \sigma^4 c_0)}
        & \text{if } \ell = 0 < \ell'
        \\[10pt]
        0 & \text{if } 0 < \ell < \ell'
    \end{cases},
\end{align}
\begin{align}
    [\mathbf{K}_{2, k}]_{\ell+1,\ell'+1} &= 
    \begin{cases}
        \frac{c_{\ell}}{(\gamma_k + \sigma^2)^2 (\gamma_k^2 - \sigma^4 c_{\ell})} & \text{if } \ell = \ell'
        \\[10pt]
        \frac{c_0}{(\gamma_k + \sigma^2)^2 (\gamma_k^2 - \sigma^4 c_0)}
        & \text{if } \ell = 0 < \ell'
        \\[10pt]
        0 & \text{if } 0 < \ell < \ell'
    \end{cases},
\end{align}
and for $i \in \{3,4\}$,
\begin{align}
    [\mathbf{K}_{i, k}]_{\ell+1,\ell'+1} &= 
    \begin{cases}
        \frac{1}{2}\frac{\sigma^2 c_{\ell}}{\gamma_k^2 - \sigma^4 c_{\ell}} & \text{if } \ell = \ell'
        \\[10pt]
        \frac{1}{2}\frac{\sigma^2 c_0}{\gamma_k^2 - \sigma^4 c_0}
        & \text{if } \ell = 0 < \ell'
        \\[10pt]
        0 & \text{if } 0 < \ell < \ell'
    \end{cases} 
\end{align}
Denote also $\betabs = \left(\betabs_1^T,\ldots,\betabs_K^T\right)^T$ with
\begin{align}
    &\betabs_k = 
    \Bigl(\beta_{1,k,0} , \ldots, \beta_{1,k,L}, \beta_{2,k,0},\ldots,\beta_{2,k,L},
    \notag\\
     &\quad
     \Re(\beta_{3,k,0}),\ldots,\Re(\beta_{3,k,L}),\Im(\beta_{3,k,0}),\ldots,\Im(\beta_{3,k,L})\Bigr)^T.
\end{align}
\begin{proposition}
    \label{prop:perturbed_eq_diff}
   The matrix $\K$ is positive definite and
    \begin{align}
      \Psi'(u) = - u \betabs^T \K \betabs \Psi(u) + \frac{\Delta(u)}{\sqrt{M}},
        \label{eq:Equation_Differentielle}
    \end{align}
    where $\Delta$ is a continuous function such that $|\Delta(u)|< \Prm(u)$ for some polynomial $\Prm$ with positive coefficients independent of $M$.
\end{proposition}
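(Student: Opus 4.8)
The plan is to prove \eqref{eq:Equation_Differentielle} by the classical Gaussian-calculus route — the integration-by-parts formula for Gaussian vectors combined with the Poincar\'e--Nash inequality — and to obtain the positive definiteness of $\K$ from the structure of its blocks. First I would write $\Psi'(u)=\irm\,\Ebb\bigl[\eta\,\xi(u)\bigr]$ and, since $\eta$ in \eqref{eq:def_eta} is a fixed real-linear combination of the quantities $\eta_{1,k,\ell},\eta_{2,k,\ell},\eta_{3,k,\ell}$, treat the contribution of each of these separately. The matrices $\W_0=[\W_1,\ldots,\W_L]$ and the vectors $\s_{k,\ell}$ have i.i.d.\ complex Gaussian entries, and the cutoff $\chi$ from \eqref{def:chi}, which equals $1$ with probability one for all large $M$, makes each $\Q_\ell(\rho_{k,\ell})\chi$ and $\tilde\Q_\ell(\rho_{k,\ell})\chi$ uniformly bounded in operator norm; this permits a rigorous application of the Gaussian integration-by-parts formula to every Gaussian entry appearing in $\eta_{i,k,\ell}$ and in $\xi(u)=\exp(\irm u\eta)$. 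The derivative acting on $\xi(u)$ yields a factor $\irm u$ multiplying a product of (bi)linear forms of resolvents times $\xi(u)$, whereas the derivative acting on a resolvent is handled through $\partial\Q_\ell=-\Q_\ell\bigl(\partial\tfrac{\W_\ell\W_\ell^*}{N_\ell}\bigr)\Q_\ell$ and the analogous identity for $\tilde\Q_\ell$.

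\emph{Identification of the leading term.} Collecting the terms in which the derivative hits $\xi(u)$, I would replace the random (bi)linear forms of $\Q_\ell,\tilde\Q_\ell$ by their deterministic equivalents — using estimates of the type of Proposition \ref{proposition:quad_form}, with fluctuations controlled by Poincar\'e--Nash — and evaluate at $z=\rho_{k,\ell}$ by means of the closed-form values of $m_\ell,\tilde m_\ell,m_\ell',\tilde m_\ell',\tau_\ell,\tau_\ell'$ provided by Lemma \ref{lemma:formulas}. The surviving $O(1)$ contributions then assemble into $-u\,\betabs^T\K\betabs\,\Psi(u)$: the diagonal entries $[\K_{i,k}]_{\ell+1,\ell+1}$ arise from the self-covariances of the $\eta_{i,k,\ell}$, the off-diagonal entries with $\ell=0<\ell'$ arise from the overlaps $\W_{\ell'}\subset\W_0$ and $\s_{k,\ell'}\subset\s_{k,0}$, while the entries with $0<\ell<\ell'$ vanish because $\W_\ell$ is independent of $\W_{\ell'}$ (and $\s_{k,\ell}$ of $\s_{k,\ell'}$). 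Block-diagonality of $\K$ in $k$ follows from the independence of the columns $\s_{k,\ell}$ across $k$ together with the orthonormality of the $\u_k$, and block-diagonality of $\K_k$ in the index $i$ from the vanishing, after centering, of the cross-covariances between $\eta_{1,k,\ell}$, $\eta_{2,k,\ell}$ and $\eta_{3,k,\ell}$.

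\emph{Remainder and positive definiteness.} Every remaining contribution — those in which the integration-by-parts derivative lands on a resolvent, and the corrections incurred when substituting deterministic equivalents — is $\Ocal_{\Pbb}(M^{-1/2})$ by variance bounds coming from Poincar\'e--Nash and the deterministic operator-norm bounds furnished by $\chi$; since each such step brings down at most a fixed power of $u$ against $\xi(u)$, one obtains $|\Delta(u)|\le\Prm(u)$ for some polynomial $\Prm$ with positive coefficients independent of $M$, and $\Delta$ is continuous since $\Psi$ and the deterministic coefficients are smooth in $u$. For the positive definiteness of $\K$, since $\K=\bdiag(\K_1,\ldots,\K_K)$ with $\K_k=\bdiag(\K_{1,k},\ldots,\K_{4,k})$, it is enough to check that each $\K_{i,k}$ is positive definite; the standing separation hypothesis $\gamma_k>\sigma^2\max\{\sqrt{c},\sqrt{c}_1,\ldots,\sqrt{c}_L\}$ gives $\gamma_k^2-\sigma^4 c_\ell>0$ for every $\ell\in\{0,\ldots,L\}$ (with $c_0=c$), so the displayed entries are well defined and positive on the diagonal. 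Each $\K_{i,k}$ has arrowhead shape — a positive diagonal matrix whose only off-diagonal entries lie in the first row and column — so a Schur complement with respect to its $(0,0)$ entry reduces positive definiteness to a single scalar inequality, easily verified from the explicit entries; alternatively, $\K_{i,k}$ may be recognized as a genuine covariance matrix of a Gaussian vector built from the overlapping blocks, which gives positive semidefiniteness directly, with strict positivity following once non-degeneracy is noted.

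The step I expect to be the main obstacle is the bookkeeping in the second paragraph: the Gaussian integration by parts produces a large number of cross terms — among $\eta_{1,k,\ell},\eta_{2,k,\ell},\eta_{3,k,\ell}$ and between the indices $\ell=0$ and $\ell\ge 1$ — and each must be carried through the deterministic-equivalent replacement and the evaluation via Lemma \ref{lemma:formulas} to confirm that the $O(1)$ part is exactly $\betabs^T\K\betabs$ with the stated entries, while keeping the error uniformly $\Ocal_{\Pbb}(M^{-1/2})$ over the relevant range of $u$.
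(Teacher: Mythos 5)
Your proposal follows essentially the same route as the paper's proof: Gaussian integration by parts (Stein's lemma) applied entry-by-entry to the Gaussian matrices $\W_\ell$ and $\s_{k,\ell}$, with the cutoff $\chi$ from \eqref{def:chi} supplying uniform resolvent bounds, the Poincar\'e inequality controlling the fluctuations, deterministic equivalents in the spirit of Proposition \ref{proposition:quad_form}, and Lemma \ref{lemma:formulas} to close the computation at $z=\rho_{k,\ell}$. Your added observation that each $\K_{i,k}$ has arrowhead shape and can be checked positive definite by a single Schur-complement computation (which yields $\sigma^8 c_0^2(L-1)/\bigl((\gamma_k+\sigma^2c_0)^2(\gamma_k^2-\sigma^4c_0)^2\bigr)$ for $\K_{1,k}$, positive once $L\geq 2$) is a clean way to fill in a step the paper states without detail; the only slight imprecision is that bounding the remainder $\Delta(u)$ requires moment/expectation bounds rather than an $\Ocal_{\Pbb}(M^{-1/2})$ in-probability bound, a distinction the Poincar\'e inequality handles automatically but that your phrasing glosses over.
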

\begin{proof}
    The proof is deferred to Appendix \ref{section:proof_prop_funcar}.
\end{proof}
From Proposition \ref{prop:perturbed_eq_diff}, by solving the perturbed differential equation in \eqref{eq:Equation_Differentielle}, we deduce that
\begin{equation}
    \Psi(u) \xrightarrow[M\to\infty]{} \exp\left(- \betabs^T \K \betabs \frac{u^2}{2 }\right),
\end{equation}
which of course implies that
\begin{equation}
    \eta \xrightarrow[M\to\infty]{\mathcal{D}} \mathcal{N}_{\Rbb}\left(0,\betabs^T \K \betabs\right).
    \label{eq:TCL_eta}
\end{equation}
The final step of the proof consists in transferring the CLT  to the $K$ largest eigenvalues of $(\hat{\R}_{\ell})_{\ell=1,\ldots,L}$. From Proposition \eqref{prop:det}, we have that:
\begin{align}
    &\hat{P}_{\ell}\left(\rho_{k,\ell} + \frac{x}{\sqrt{M}}\right)
    =
    \notag\\
    &\qquad\frac{1}{\sqrt{M}} \gamma_k \tau'_{\ell}(\rho_{k,\ell}) \left( x  - \zeta_{k,\ell} + o_{\Pbb}\left(1\right)\right) \prod_{i \neq k} \left(\gamma_i \tau_{\ell}(\rho_{k,\ell}) -1 \right),
\end{align}
with
\begin{align}
    \zeta_{k,\ell} &=
    \frac{1}{\gamma_k \tau'_{\ell}(\rho_{k,\ell})} \Bigl(2 \sqrt{\gamma_k}\Re\left(\eta_{3,k,\ell}\right)
    - \gamma_k \rho_{k,\ell} \tilde{m}_{\ell}(\rho_{k,\ell})\eta_{1,k,\ell} 
    \notag\\
    &\qquad\qquad - \gamma_k \rho_{k,\ell} m_{\ell}(\rho_{k,\ell})\eta_{2,k,\ell}\Bigr).
\end{align}
Thus, going back to \eqref{eq:prod_det}, we get
\begin{align}
    &\Pbb\left( \bigcap_{k=1}^K \bigcap_{\ell=0}^L \left\{\sqrt{M} \left(\lambda_k(\hat{\R}_{\ell}) - \rho_{k,\ell}\right) \in [x_{k,\ell},y_{k,\ell}]\right\}\right)
    =
    \notag\\
    &\quad \Pbb
    \left(
        \bigcap_{k=1}^K\bigcap_{\ell=0}^L 
        \left\{x_{k,\ell} < \zeta_{k,\ell} + o_{\Pbb}(1) < y_{k,\ell}\right\}
    \right) 
  + o(1).
\end{align}
Using \eqref{eq:TCL_eta}  with suitable values for $\betabs$ as well as the equalities of Lemma \ref{lemma:formulas}, we have $\zetabs = \left(\zeta_{k,\ell}\right)_{\substack{\ell=0,\ldots,L \\ k=1,\ldots,K}} \xrightarrow[M \to \infty]{\Dcal} \Ncal_{\Rbb^{K(L+1)}}\left(\mathbf{0},\Thetabs\right)$,
where $\Thetabs$ is given in the statement of Theorem \ref{theorem:CLT_lambda_R_Hat}. Finally, noticing  that
\begin{align}
    \det(\Thetabs) 
    &=
    \prod_{k=1}^K \prod_{\ell=1}^L \theta_{k,\ell}^2 \left(\theta_0^2 - \sum_{\ell'=1}^L \frac{\vartheta_{k,\ell'}^2}{\theta_{k,\ell}^2}\right)
    \notag\\
    &=
    \left(\sigma^4 c_0^2 (L-1)\right)^{K} \prod_{k=1}^K \left(\frac{\gamma_k+\sigma^2}{\gamma_k}\right)^{2(L+1)}
    \notag\\
    &\hspace{4.5cm}\times\prod_{\ell=1}^L c_{\ell} (\gamma_k^2-\sigma^4 c_{\ell}),
\end{align}
we obtain that $\det(\Thetabs) > 0$ thanks to Assumption  \ref{assumption:Convergence_gamma_Separation_Condition} which concludes the proof of Theorem \ref{theorem:CLT_lambda_R_Hat}.

\subsection{Proof of Corollary \ref{corollary:CLT_gamma}}
\label{section:Delta_Method}

Denote $c_0=c$ and for all $\ell=0,\ldots,L$, let $\hat{\phi}_{\ell}(w) = w \left(1 - \frac{\hat{\sigma}^2 c_{\ell}}{\hat{\sigma}^2 - w}\right)$.
Denote as well $\hat{\R}_0 = \hat{\R}$ and $\hat{\gamma}_{k,0} = \hat{\gamma}_k$ for ease of reading. Under Assumption \ref{assumption:Convergence_gamma_Separation_Condition}, we first observe from Theorem \ref{theorem:Spike_model_Limits} that
\begin{align}
    \lambda_{k}\left(\hat{\R}_{\ell}\right) \xrightarrow[M\to\infty]{a.s.} 
    \phi_{\ell}(\gamma_k+\sigma^2) = \frac{(\gamma_k+\sigma^2)(\gamma_k+\sigma^2 c_{\ell})}{\gamma_k},
\end{align}
so that $\hat{\lambda}_k(\hat{\R}_{\ell}) > \hat{\sigma}^2 \sqrt{c_{\ell}}$
with probability one for all large $M$, and therefore $\hat{\gamma}_{k,\ell} + \hat{\sigma}^2$ coincides
with the largest solution to the equation $\hat{\phi}_{\ell}(w) = \lambda_k(\hat{\R}_{\ell})$. From Lemma \ref{lemma:w}, we deduce that $\hat{\gamma}_{k,\ell} = \hat{w}_{\ell}\left(\hat{\lambda}_k(\hat{\R}_{\ell}) \right) - \hat{\sigma}^2$.
with probability one for all large $M$, where $\hat{w}_{\ell}(z) = z\left(1+\hat{\sigma}^2 c_{\ell}\hat{m}_{\ell}(z)\right)$ with $\hat{m}_{\ell}$ the Stieltjes transform of the Marcenko-Pastur distribution with parameter $(\hat{\sigma}^2, c_{\ell})$. It is easy to see that $\hat{\sigma}^2 = \sigma^2 + \Ocal_{\Pbb}\left(\frac{1}{M}\right)$ and:
\begin{align}
    \hat{m}_{\ell}\left(\hat{\lambda}_k(\hat{\R}_{\ell}) \right) = 
    m_{\ell}\left(\hat{\lambda}_k(\hat{\R}_{\ell}) \right) + \Ocal_{\Pbb}\left(\frac{1}{M}\right).
\end{align}
Therefore, we deduce that
\begin{align}
    \hat{\gamma}_{k,\ell} = 
    w_{\ell}\left(\hat{\lambda}_k(\hat{\R}_{\ell}) \right) - \sigma^2 + \Ocal_{\Pbb}\left(\frac{1}{M}\right).
\end{align}
As $w_{\ell}(\phi_{\ell}(\gamma_k+\sigma^2)) = \gamma_k + \sigma^2$ and $w_{\ell}$ is differentiable at point $\gamma_k+\sigma^2$, a straightforward use of the delta-method provides 
\begin{align}
        \sqrt{M}
        \left(\hat{\gamma}_{k,\ell} - \gamma_k\right)_{\substack{\ell=0,\ldots,L \\ k = 1, \dots, K}}
        \xrightarrow[M\to\infty]{\mathcal{D}} \mathcal{N}_{\mathbb{R}^{K(L+1)}}\left(\mathbf{0},\G\mathbf{\Theta}\G\right),
\end{align}
where $\G = \diag\left((w'_{\ell}(\phi_{\ell}(\gamma_k+\sigma^2)))_{\substack{\ell=0,\ldots,L \\ k = 1, \dots, K}}\right)$. Noticing that $w'_{\ell}(\phi_{\ell}(\gamma_k+\sigma^2)) = \frac{\gamma_k^2}{\gamma_k^2 - \sigma^2 c_{\ell}}$ from Lemma \ref{lemma:formulas}, we end up with $\G\mathbf{\Theta}\G = \Omegabs$, where $\Omegabs$ is given in the statement of Corollary \ref{corollary:CLT_gamma}. Another immediate use of the delta-method allows to transfer the CLT from $\left(\hat{\gamma}_{k,\ell}\right)_{\substack{\ell=0,\ldots,L \\ k = 1, \dots, K}}$ to $\hat{\gammabs}$.

\subsection{Additional proofs}
\label{section:add_proofs}

\subsubsection{Proof of Proposition \ref{prop:det}}
\label{section:proof_lemma_det}

It is easy to see using Proposition \ref{proposition:quad_form} that for all $x \in \Rbb$,
\begin{align}
   &\hat{\Xibs}_{\ell}\left(\rho_{k,\ell} + \frac{x}{\sqrt{M}}\right) =
   \notag\\
   &\qquad\qquad\begin{bmatrix}
        \sqrt{\rho_{k,\ell}} m_{\ell}(\rho_{k,\ell}) \D
        &
        \mathbf{0}
        \\
        \mathbf{0} 
        &
      \sqrt{\rho_{k,\ell}} \tilde{m}_{\ell}(\rho_{k,\ell}) \I
    \end{bmatrix}
    + \Deltabs,
\end{align}
where
\begin{align}
    &\Deltabs =
    \notag\\
    &\begin{bmatrix}
        \sqrt{\rho_{k,\ell}} \Omegabs^* \left(\Q_{\ell}(\rho_{k,\ell})\chi\right)^\circ \Omegabs
        &
        \frac{1}{N_{\ell}} \Omegabs^* \Q_{\ell}(\rho_{k,\ell}) \chi \W_{\ell} \S_{\ell}
        \notag\\
        \frac{1}{N_{\ell}} \S_{\ell}^*\W_{\ell}^*\Q_{\ell}(\rho_{k,\ell}) \chi \Omegabs 
        & \sqrt{\rho_{k,\ell}} \frac{1}{N_{\ell}} \left(\S_{\ell}^* \tilde{\Q}_{\ell}(\rho_{k,\ell}) \S_{\ell} \chi\right)^\circ
    \end{bmatrix}
    \notag\\
    & + \frac{x}{\sqrt{M}} 
    \begin{bmatrix}
        h\left(\rho_{k,\ell}\right) \D
        &
        \mathbf{0}
        \\
        \mathbf{0} 
        &
        \tilde{h}\left(\rho_{k,\ell}\right) \I
    \end{bmatrix}
    + o_{\Pbb}\left(\frac{1}{\sqrt{M}}\right),
\end{align}
with $h_{\ell}(z) = \frac{m_{\ell}(z)}{2\sqrt{z}} + \sqrt{z} m_{\ell}'(z)$ and $\tilde{h}_{\ell}(z) = \frac{\tilde{m}_{\ell}(z)}{2\sqrt{z}} + \sqrt{z} \tilde{m}_{\ell}'(z)$.
Note that $\left\|\Deltabs\right\|_2 = \Ocal_{\Pbb}\left(\frac{1}{\sqrt{M}}\right)$ and we also consider the partition $\Deltabs = (\Deltabs_{i,j})_{i,j=1,2}$ where each block $\Deltabs_{i,j}$ is $K \times K$. Consider the event $\Acal = \left\{\left\|\Deltabs_{2,2}\right\|_2 < \sqrt{\rho_{k,\ell}} \tilde{m}_\ell(\rho_{k,\ell}) \right\}$. From the block matrix determinant formula, we have:
\begin{align}  
    &\hat{P}_{\ell}\left(\rho_{k,\ell} + \frac{x}{\sqrt{M}}\right) \mathbb{1}_{\Acal}
    = 
    \Phi \ \det\left(\sqrt{\rho_{k,\ell}} \tilde{m}_{\ell}(\rho_{k,\ell}) \I + \Deltabs_{2,2}\right) \mathbb{1}_{\Acal},
\end{align}
with
\begin{align}
    &\Phi = 
    \det
    \biggl(
    \sqrt{\rho_{k,\ell}} m_{\ell}(\rho_{k,\ell}) \D + \Deltabs_{1,1} 
    \notag\\
    &  - (\I+\Deltabs_{2,1}) \left(\sqrt{\rho}_{k,\ell} \tilde{m}_\ell(\rho_{k,\ell}) \I + \Deltabs_{2,2} \right)^{-1} (\I+\Deltabs_{1,2}) \biggr) \mathbb{1}_{\Acal}.
\end{align}
Moreover,
\begin{align}
    &\biggl(\sqrt{\rho}_{k,\ell} \tilde{m}_\ell(\rho_{k,\ell}) \I + \Deltabs_{2,2}\biggr)^{-1} \mathbb{1}_{\Acal} 
    =
    \notag\\
    &\biggl(\frac{\I}{\sqrt{\rho_{k,\ell}} \tilde{m}_\ell(\rho_{k,\ell})}
    -
    \frac{\Deltabs_{2,2}}{\rho_{k,\ell} \tilde{m}_\ell(\rho_{k,\ell})^2}\biggr) \mathbb{1}_{\Acal}
    + o_{\Pbb}\left(\frac{1}{\sqrt{M}}\right),
\end{align}
which yields
\begin{align}
    &\Phi = \det\biggl(
    \sqrt{\rho_{k,\ell}} m_{\ell}(\rho_{k,\ell}) \D - \frac{\I}{\sqrt{\rho}_{k,\ell} \tilde{m}_\ell(\rho_{k,\ell}))}
    + \Deltabs_{1,1} 
    \notag\\
    & \ \qquad - \frac{\Deltabs_{1,2} + \Deltabs_{2,1}}{\sqrt{\rho_{k,\ell}} \tilde{m}_\ell(\rho_{k,\ell})}
    +\frac{\Deltabs_{2,2}}{\rho_{k,\ell} \tilde{m}_\ell(\rho_{k,\ell})^2}
    \biggr) \mathbb{1}_{\Acal} + o_{\Pbb}\left(\frac{1}{\sqrt{M}}\right).
\end{align}
Since $\D = \diag(\gamma_1,\ldots,\gamma_K) + o\left(\frac{1}{\sqrt{M}}\right)$ from Assumption \ref{assumption:Convergence_gamma}, and from Lemma \ref{lemma:formulas}, we have
\begin{align}
    \det\left(\sqrt{\rho_{k,\ell}} m_{\ell}(\rho_{k,\ell}) \D - \frac{\I}{\sqrt{\rho_{k,\ell}} \tilde{m}_\ell(\rho_{k,\ell})}\right) = o\left(\frac{1}{\sqrt{M}}\right).
\end{align}
Using the differential of the determinant, we further have
\begin{align}
    &\Phi = \tr \Biggl[
    \mathrm{com}\left(\sqrt{\rho_{k,\ell}} m_{\ell}(\rho_{k,\ell}) \D - \frac{\I}{\sqrt{\rho}_{k,\ell} \tilde{m}_\ell(\rho_{k,\ell})}\right)^T
    \notag\\
    & \quad \times \left(\Deltabs_{1,1}
    - \frac{\Deltabs_{1,2} + \Deltabs_{2,1}}{\sqrt{\rho}_{k,\ell} \tilde{m}_\ell(\rho_{k,\ell})}
    +\frac{\Deltabs_{2,2}}{\rho_{k,\ell} \tilde{m}_\ell(\rho_{k,\ell})^2}\right)
    \Biggr] 
    \mathbb{1}_{\Acal} 
    \notag\\
    & \quad  +o_{\Pbb}\left(\frac{1}{\sqrt{M}}\right),
\end{align}
where $\mathrm{com}()$ denotes the comatrix operation. A direct computation together with Assumption \ref{assumption:Convergence_gamma} provides
\begin{align}
    &\mathrm{com}\left(\sqrt{\rho_{k,\ell}} m_{\ell}(\rho_{k,\ell}) \D - \frac{\I}{\sqrt{\rho_{k,\ell}} \tilde{m}_\ell(\rho_{k,\ell})}\right)
    = 
    \notag\\ % % \vspace{0.25cm}
    & \qquad \frac{\prod_{i \neq k}\left(\gamma_i \rho_{k,\ell} m_{\ell}(\rho_{k,\ell}) \tilde{m}_{\ell}(\rho_{k,\ell}) -1 \right)}{(\sqrt{\rho_{k,\ell}}\tilde{m}_{\ell}(\rho_{k,\ell}))^{K-1}} \e_k \e_k^* + o\left(\frac{1}{\sqrt{M}}\right).
\end{align}
Consequently,
\begin{align}
    &\Phi =  o_{\Pbb}\left(\frac{1}{\sqrt{M}}\right) + \frac{\prod_{i \neq k}\left(\gamma_i \rho_{k,\ell} m_{\ell}(\rho_{k,\ell}) \tilde{m}_{\ell}(\rho_{k,\ell}) -1 \right)}{(\sqrt{\rho_{k,\ell}}\tilde{m}_{\ell}(\rho_{k,\ell}))^{K-1}} 
    \notag\\
    & \quad \times \left[\Deltabs_{1,1}
    - \frac{\Deltabs_{1,2} + \Deltabs_{2,1}}{\sqrt{\rho_{k,\ell}} \tilde{m}_\ell(\rho_{k,\ell})}
    +\frac{\Deltabs_{2,2}}{\rho_{k,\ell} \tilde{m}_\ell(\rho_{k,\ell})^2}\right]_{k,k}
    \mathbb{1}_{\Acal}.
\end{align}
In the same way, 
\begin{align}
    &\det\left(\sqrt{\rho_{k,\ell}} \tilde{m}_{\ell}(\rho_{k,\ell}) \I + \Deltabs_{2,2}\right) 
    = 
     \notag\\
    &\qquad\qquad\qquad\qquad 
    \left(\sqrt{\rho_{k,\ell}} \tilde{m}_{\ell}(\rho_{k,\ell})\right)^K + \Ocal_{\Pbb}\left(\frac{1}{\sqrt{M}}\right),
\end{align}
so that
\begin{align}
    &\hat{P}_{\ell}\left(\rho_{k,\ell} + \frac{x}{\sqrt{M}}\right) \mathbb{1}_{\Acal}
    =
    \notag\\
    &\left[\sqrt{\rho_{k,\ell}} \tilde{m}_{\ell}(\rho_{k,\ell}) \Deltabs_{1,1} + \frac{\Deltabs_{2,2}}{\sqrt{\rho_{k,\ell}} \tilde{m}_{\ell}(\rho_{k,\ell})} -(\Deltabs_{1,2} + \Deltabs_{2,1})\right]_{k,k}
      \notag\\
      &\times \prod_{i \neq k} \left(\gamma_i \rho_{k,\ell} m_{\ell}(\rho_{k,\ell}) \tilde{m}_{\ell}(\rho_{k,\ell}) -1 \right)
      + o_{\Pbb}\left(\frac{1}{\sqrt{M}}\right).
\end{align}
Since $\mathbb{1}_{\Acal} = 1 + o_{\Pbb}(1)$, we also deduce that
\begin{align}
    \hat{P}_{\ell}\left(\rho_{k,\ell} + \frac{x}{\sqrt{M}}\right) \mathbb{1}_{\Acal^c} = o_{\Pbb}\left(\frac{1}{\sqrt{M}}\right),
\end{align}
and using Assumption \ref{assumption:Convergence_gamma} leads to the result of Proposition \ref{prop:det}.

\subsubsection{Proof of Proposition \ref{prop:perturbed_eq_diff}}
\label{section:proof_prop_funcar}

The proof makes use of well-known techniques specific to the Gaussian distribution, namely the \textit{Stein's lemma} and the \textit{Poincaré's inequality} which we recall below and which have already been exploited in e.g. \cite{Hachem2008, Vallet2015, Mestre2017}. Therefore, we only give the main steps of the proof and skip some details of the computations.

\paragraph{Useful tools.} A function $f:\Cbb^n \to \Cbb$ is said to be  $\Ccal^1(\Cbb^n)$ if $f(\z) = \tilde{f}(\Re(\z),\Im(\z))$ with $\tilde{f} \in \Ccal^1(\Rbb^{2n})$. Moreover, we also recall the definition of the standard complex differential operators
\begin{align}
    \frac{\partial f}{\partial z_k}(\z) &=
    \frac{1}{2}\left(\frac{\partial \tilde{f}}{\partial x_k}(\x,\y) - \irm \frac{\partial \tilde{f}}{\partial y_k}(\x,\y) \right)
    \\
    \frac{\partial f}{\partial \overline{z_k}}(\z) &=
    \frac{1}{2}\left(\frac{\partial \tilde{f}}{\partial x_k}(\x,\y) + \irm \frac{\partial \tilde{f}}{\partial y_k}(\x,\y) \right)
\end{align}
with $\x = \Re(\z)$ and $\y = \Im(\z)$.
\begin{lemma}[Stein's lemma]
  Let $\w \sim \Ncal_{\Cbb^n}(\mathbf{0},\I)$ and $f \in \Ccal^1(\Cbb^n)$, assumed polynomially bounded together with its partial derivatives. Then for all $k =1,\ldots,n$,
  \begin{align}
    \mathbb{E}[f(\w) w_k] = \mathbb{E}\left[\frac{\partial f}{\partial \overline{w_k}}(\w)\right],
    \ 
    \mathbb{E}[f(\w) \overline{w_k}] = \mathbb{E}\left[\frac{\partial f}{\partial w_k}(\w)\right].
                                \notag
  \end{align}
\end{lemma}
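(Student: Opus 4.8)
The plan is to derive the complex identity from the classical one-dimensional real Gaussian integration-by-parts formula and then lift it to $\Cbb^n$ by conditioning. First I would record the scalar real version: if $X \sim \Ncal_{\Rbb}(0,\sigma^2)$ and $g \in \Ccal^1(\Rbb)$ is polynomially bounded together with $g'$, then integrating $g(t)\,t\,\erm^{-t^2/(2\sigma^2)}$ by parts yields $\mathbb{E}[g(X)X] = \sigma^2\,\mathbb{E}[g'(X)]$, the boundary terms vanishing since polynomial growth is dominated by the Gaussian tail. Applying this coordinatewise (conditioning on all but one variable) extends it to $\Ncal_{\Rbb^m}(\mathbf{0},\mathbf{I})$ and, more generally, to any product of independent centered real Gaussians.

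Next I would treat the complex scalar case $n=1$. Writing $w = x + \irm y$ with $x = \Re(w)$, $y = \Im(w)$, the hypothesis $w \sim \Ncal_{\Cbb}(0,1)$ means that $x$ and $y$ are independent $\Ncal_{\Rbb}(0,\tfrac12)$ variables. Setting $f(w) = \tilde f(x,y)$ with $\tilde f \in \Ccal^1(\Rbb^2)$ and applying the real formula in the $x$ and $y$ directions separately (to the real and imaginary parts of $\tilde f$) gives $\mathbb{E}[\tilde f\,x] = \tfrac12\,\mathbb{E}[\partial_x \tilde f]$ and $\mathbb{E}[\tilde f\,y] = \tfrac12\,\mathbb{E}[\partial_y \tilde f]$. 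Combining these,
\[
\mathbb{E}\big[f(w)\,\overline{w}\big] = \mathbb{E}\big[\tilde f\,(x - \irm y)\big] = \tfrac12\,\mathbb{E}\big[(\partial_x - \irm\,\partial_y)\tilde f\big] = \mathbb{E}\!\left[\frac{\partial f}{\partial w}(w)\right],
\]
and likewise $\mathbb{E}[f(w)\,w] = \tfrac12\,\mathbb{E}[(\partial_x + \irm\,\partial_y)\tilde f] = \mathbb{E}[\partial f/\partial\overline{w}(w)]$, which are exactly the two claimed formulas by the very definitions of the Wirtinger operators recalled just above.

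Finally, for general $n$ and a fixed $k$, I would condition on $(w_j)_{j \ne k}$: by independence of the entries the conditional law of $w_k$ is again $\Ncal_{\Cbb}(0,1)$, and $w_k \mapsto f(\w)$ is $\Ccal^1(\Cbb)$ with polynomially bounded partials, so the scalar identity applies to the conditional expectation; averaging over $(w_j)_{j\ne k}$ and using Fubini (justified by the polynomial-growth bound, which provides an integrable dominating function) delivers the stated identities. I do not expect a genuine obstacle here: the only points requiring care are the vanishing of the boundary terms in the real integration by parts and the interchange of differentiation and integration, both of which are guaranteed by the polynomial-boundedness hypothesis on $f$ and its derivatives.
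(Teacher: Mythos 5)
Your derivation is correct. Note that the paper does not actually prove this lemma: it is recalled as a standard Gaussian tool (with references such as \cite{Hachem2008, Vallet2015, Mestre2017}), so there is no in-paper argument to compare against; your route --- real scalar Gaussian integration by parts, then the complex scalar case via the decomposition $w = x + \irm y$ with $x,y$ independent $\Ncal_{\Rbb}(0,\tfrac12)$, then the multivariate case by conditioning on $(w_j)_{j\neq k}$ --- is the standard way to establish it. You correctly track the variance factor $\tfrac12$, which is exactly what makes the Wirtinger operators $\frac{\partial}{\partial w} = \tfrac12(\partial_x - \irm\,\partial_y)$ and $\frac{\partial}{\partial \overline{w}} = \tfrac12(\partial_x + \irm\,\partial_y)$ appear without any stray constant, matching the paper's definitions. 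The only points deserving explicit mention are the ones you already flag: the boundary terms in the real integration by parts vanish because polynomial growth is killed by the Gaussian density, and in the conditioning step the hypothesis (a polynomial bound in the full vector $\w$) yields, for almost every fixed $(w_j)_{j\neq k}$, a polynomial bound in $w_k$ alone, which justifies both the scalar identity conditionally and the Fubini interchange.
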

\begin{lemma}[Poincar\'e inequality]
    Let $\w \sim \Ncal_{\Cbb^n}(\mathbf{0},\I)$ and $f \in \Ccal^1(\Cbb^n)$, assumed polynomially bounded together with its partial derivatives.
    Then,
    \begin{align}
        \Vbb[f(\w)] \leq
        \sum_{k=1}^n
        \left(\mathbb{E}\left|\frac{\partial f}{\partial \overline{w_k}}(\w)\right|^2 + \mathbb{E}\left|\frac{\partial f}{\partial w_k}(\w)\right|^2\right).
    \end{align}
\end{lemma}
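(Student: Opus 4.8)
The plan is to reduce the inequality to the classical real-variable Gaussian Poincaré inequality and then translate ordinary partial derivatives into the Wirtinger derivatives $\partial/\partial z_k$, $\partial/\partial\overline{z_k}$ recalled just before the statement.

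\textbf{Step 1: reduction to the real case.} Write $\w = \x + \irm\y$ with $\x = \Re(\w)$, $\y = \Im(\w)$. Since $\w \sim \Ncal_{\Cbb^n}(\mathbf{0},\I)$ is circular, the real random vector $\z = (\x^T,\y^T)^T \in \Rbb^{2n}$ is $\Ncal_{\Rbb^{2n}}(\mathbf{0},\tfrac12\I_{2n})$ (indeed $\Ebb[w_k\overline{w_k}]=1$ forces $\Re(w_k),\Im(w_k)$ to each have variance $\tfrac12$). Decompose $f = u + \irm v$ with $u,v$ real-valued; both are $\Ccal^1$ on $\Rbb^{2n}$ and, together with their first-order derivatives, polynomially bounded, hence square-integrable against the Gaussian measure. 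Because $\Ebb|f - \Ebb f|^2 = \Ebb(u-\Ebb u)^2 + \Ebb(v-\Ebb v)^2$, we have $\Vbb[f] = \Vbb[u] + \Vbb[v]$.

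\textbf{Step 2: the real Gaussian Poincaré inequality.} Recall the standard fact that if $\g\sim\Ncal_{\Rbb^m}(\mathbf{0},\I_m)$ and $h\in\Ccal^1(\Rbb^m)$ is polynomially bounded together with $\nabla h$, then $\Vbb[h(\g)] \le \Ebb\big[\,\|\nabla h(\g)\|^2\big]$. A self-contained proof proceeds either (i) via the Ornstein--Uhlenbeck semigroup $(P_t)$, writing $\Vbb[h] = \int_0^\infty 2\,\Ebb\big[\|\nabla P_t h\|^2\big]\,\drm t$ and using $\nabla P_t h = \erm^{-t}P_t(\nabla h)$ together with Jensen's inequality; or (ii) via the Hermite expansion in dimension one (for which $\Vbb[h]=\sum_{j\ge1}j!\,a_j^2 \le \sum_{j\ge1} j\cdot j!\,a_j^2 = \Ebb[h'^2]$, since $H_j'=jH_{j-1}$), followed by tensorization of the variance across the $m$ coordinates. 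Applying this to the rescaled vector $\sqrt{2}\,\z\sim\Ncal_{\Rbb^{2n}}(\mathbf{0},\I_{2n})$ with the functions $u$ and $v$ gives
\begin{align}
  \Vbb[u] \le \tfrac12\sum_{k=1}^n \Ebb\Big[\big|\partial_{x_k}u\big|^2 + \big|\partial_{y_k}u\big|^2\Big],
  \qquad
  \Vbb[v] \le \tfrac12\sum_{k=1}^n \Ebb\Big[\big|\partial_{x_k}v\big|^2 + \big|\partial_{y_k}v\big|^2\Big].
\end{align}

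\textbf{Step 3: Wirtinger conversion.} Adding the two bounds and using $\big|\partial_{x_k}f\big|^2 = \big|\partial_{x_k}u\big|^2 + \big|\partial_{x_k}v\big|^2$ and the analogous identity for $\partial_{y_k}$, we obtain $\Vbb[f] \le \tfrac12\sum_{k}\Ebb\big[|\partial_{x_k}f|^2 + |\partial_{y_k}f|^2\big]$. From the definitions of $\frac{\partial f}{\partial z_k}$ and $\frac{\partial f}{\partial\overline{z_k}}$ and the elementary identity $|a-\irm b|^2 + |a+\irm b|^2 = 2(|a|^2+|b|^2)$ applied with $a = \partial_{x_k}f$, $b = \partial_{y_k}f$, one has the pointwise equality $|\partial_{x_k}f|^2 + |\partial_{y_k}f|^2 = 2\big(|\partial f/\partial z_k|^2 + |\partial f/\partial\overline{z_k}|^2\big)$. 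Substituting into the previous bound yields exactly the claimed inequality.

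\textbf{Main difficulty.} There is no substantial obstacle here: the content is classical and the argument is essentially bookkeeping. The only points demanding care are the factor $\tfrac12$ coming from the circular normalization, and checking that the polynomial-growth hypothesis indeed places $f$ and its first derivatives in $L^2$ of the Gaussian measure, so that the semigroup (or Hermite) machinery and the variance tensorization apply without modification.
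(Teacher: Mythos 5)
Your proof is correct. Note that the paper itself does not prove this lemma: it is stated as a recalled classical tool (alongside Stein's lemma) with references to prior work, so there is no internal argument to compare against. Your reduction is the standard one and the bookkeeping is right: since $\w\sim\Ncal_{\Cbb^n}(\mathbf{0},\I)$ is circular, each real coordinate of $\z=(\Re(\w)^T,\Im(\w)^T)^T$ has variance $\tfrac12$, so the real Gaussian Poincar\'e inequality (OU semigroup or Hermite expansion plus tensorization, both of which you sketch correctly) gives $\Vbb[u]+\Vbb[v]\le \tfrac12\sum_k\Ebb\bigl[|\partial_{x_k}f|^2+|\partial_{y_k}f|^2\bigr]$, and the Wirtinger identity $|\partial_{x_k}f|^2+|\partial_{y_k}f|^2=2\bigl(|\partial f/\partial z_k|^2+|\partial f/\partial\overline{z_k}|^2\bigr)$ (valid for complex-valued $a=\partial_{x_k}f$, $b=\partial_{y_k}f$) makes the factors $\tfrac12$ and $2$ cancel, yielding exactly the stated constant $1$. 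The polynomial-growth hypothesis indeed guarantees $L^2$ integrability of $f$ and its first derivatives under the Gaussian measure, which is all the semigroup/Hermite machinery needs, so the argument is complete.
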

For ease of reading, we introduce the following differentiation operators with respect to the entries of the $M \times N_{\ell}$ matrix $\W_\ell$, which will be constantly used in the derivations below, 
\begin{align}
    \partial_{i,j}^{(\ell)} = \frac{\partial}{\partial [\W_{\ell}]_{i,j}},
  \quad
  \overline{\partial}_{i,j}^{(\ell)} = \frac{\partial}{\partial \overline{[\W_{\ell}]_{i,j}}}.
\end{align}
We will also need the following auxiliary result (see \cite{Hachem2012}) related to the quantity $\chi$ defined in \eqref{def:chi}.
\begin{lemma}
    \label{lemma:regularization}
    For all $p \in \Nbb$ and $r \in \Nbb$, $\mathbb{E}\left[\chi^r\right] = 1 + \Ocal\left(\frac{1}{N^p}\right)$ and for  $\ell \in \{0,\ldots,L\}$ and for any $i \in \{1,\ldots,M\}$, $j \in \{1,\ldots,N_{\ell}\}$,
    \begin{align}
        \mathbb{E}\left[\partial^{(\ell)}_{i,j}\chi^r\right] = \Ocal\left(\frac{1}{N^p}\right)
                                                          \text{ and }
        \mathbb{E}\left[\overline{\partial}^{(\ell)}_{i,j}\chi^r\right] = \Ocal\left(\frac{1}{N^p}\right).
    \end{align}
\end{lemma}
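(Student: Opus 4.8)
The plan is to exploit the elementary fact that $\chi$ equals $1$ identically on an event whose complement has probability decaying faster than any power of $N$, and that on a slightly smaller such event \emph{all} the derivatives appearing in the statement vanish pointwise; the lemma then reduces to multiplying a crude polynomial-in-$N$ bound by this super-polynomially small probability. Concretely, I would fix $p \in \Nbb$, assume without loss of generality that $\varphi$ takes values in $[0,1]$ (so that $0 \le \chi \le 1$), and introduce the event
\[
    \Ecal_M = \bigcap_{\ell=0}^L \Bigl\{\, \spec\bigl(\tfrac{1}{N_\ell}\W_\ell\W_\ell^*\bigr) \subset \bigl[\,a-\tfrac{\epsilon}{4},\; b+\tfrac{\epsilon}{4}\,\bigr] \Bigr\}.
\]
Since $a \le \sigma^2(1-\sqrt{c_\ell})^2$ and $\sigma^2(1+\sqrt{c_\ell})^2 \le b$ for every $\ell=0,\ldots,L$, the Marcenko--Pastur edges $\sigma^2(1\pm\sqrt{c_\ell})^2$ of each $\tfrac{1}{N_\ell}\W_\ell\W_\ell^*$ lie in the interior of $[a-\tfrac{\epsilon}{4},b+\tfrac{\epsilon}{4}]$, so the classical quantitative estimates on the extreme eigenvalues of Gaussian sample covariance matrices (see \cite{Geman1980} together with the Gaussian concentration of the extreme singular values) yield $\Pbb(\Ecal_M^c) = o(N^{-q})$ for every $q \in \Nbb$; in fact this probability is exponentially small in $N$. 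Moreover $[a-\tfrac{\epsilon}{4},b+\tfrac{\epsilon}{4}]$ is compactly contained in the open interval $(a-\tfrac{\epsilon}{2},b+\tfrac{\epsilon}{2})$ on which $\varphi\equiv 1$, so on $\Ecal_M$ every factor $\det\varphi(\tfrac{1}{N_\ell}\W_\ell\W_\ell^*)$ equals $1$ and hence $\chi = 1$. The first assertion then follows at once: writing $\chi^r = \mathbb{1}_{\Ecal_M} + \chi^r\mathbb{1}_{\Ecal_M^c}$ and using $0 \le \chi^r \le 1$ gives $\bigl|\Ebb[\chi^r]-1\bigr| = \bigl|\Ebb[\chi^r\mathbb{1}_{\Ecal_M^c}] - \Pbb(\Ecal_M^c)\bigr| \le 2\,\Pbb(\Ecal_M^c) = \Ocal(N^{-p})$.

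For the derivatives I would first record that $\chi^r$ is a $\Ccal^{\infty}$ function of the real and imaginary parts of the entries of $\W_1,\ldots,\W_L$, being a product of factors $\det\varphi(\tfrac{1}{N_{\ell'}}\W_{\ell'}\W_{\ell'}^*)$, each the composition of the smooth maps $\W\mapsto \tfrac{1}{N_{\ell'}}\W\W^*$, functional calculus by $\varphi\in\Ccal^{\infty}_c(\Rbb)$, and $\det$. On $\Ecal_M$ the function $\varphi$ is constant (equal to $1$) on an open neighbourhood of $\spec(\tfrac{1}{N_{\ell'}}\W_{\ell'}\W_{\ell'}^*)$ for every $\ell'$, and since the Fr\'echet derivative of a matrix function at a Hermitian matrix depends only on the values of the function near its spectrum, this forces $\partial^{(\ell)}_{i,j}\det\varphi(\tfrac{1}{N_{\ell'}}\W_{\ell'}\W_{\ell'}^*) = 0$ on $\Ecal_M$ (and likewise with $\overline{\partial}^{(\ell)}_{i,j}$); hence $\partial^{(\ell)}_{i,j}\chi^r = \bigl(\partial^{(\ell)}_{i,j}\chi^r\bigr)\mathbb{1}_{\Ecal_M^c}$ pointwise, and similarly for $\overline{\partial}^{(\ell)}_{i,j}$. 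It then only remains to bound $\Ebb\bigl|\partial^{(\ell)}_{i,j}\chi^r\bigr|^2$ by a polynomial in $N$. Using $\chi^r = \prod_{\ell'}\det\varphi(\tfrac{1}{N_{\ell'}}\W_{\ell'}\W_{\ell'}^*)^r$, the product rule, $0\le\det\varphi(\cdot)\le 1$, and a routine estimate of the entrywise derivative of each factor (via $\partial\det(B)=\tr(\adj(B)\,\partial B)$, the bounds $\|\varphi\|_\infty\le1$ and $\|\varphi'\|_\infty<\infty$, the fact that $\partial^{(\ell)}_{i,j}(\tfrac{1}{N_{\ell'}}\W_{\ell'}\W_{\ell'}^*)$ has rank one and spectral norm at most $\tfrac{1}{N_{\ell'}}\|\W_{\ell'}\|_2$, and the fact that this derivative vanishes unless $\W_{\ell'}$ contains the entry $[\W_\ell]_{i,j}$, which happens for at most two indices $\ell'$), one obtains the uniform bound $\bigl|\partial^{(\ell)}_{i,j}\chi^r\bigr| \le C\,r\,\tfrac{M}{N}\max_{\ell'=1,\ldots,L}\|\W_{\ell'}\|_2$ for some constant $C=C(\varphi)$, and the same for $\overline{\partial}^{(\ell)}_{i,j}$. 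Since $\Ebb\bigl[\max_{\ell'}\|\W_{\ell'}\|_2^2\bigr]=\Ocal(N)$ by a standard moment bound for Gaussian matrices and $M/N=\Ocal(1)$, Cauchy--Schwarz then gives
\begin{align}
    \bigl|\Ebb[\partial^{(\ell)}_{i,j}\chi^r]\bigr|
    &= \bigl|\Ebb\bigl[(\partial^{(\ell)}_{i,j}\chi^r)\,\mathbb{1}_{\Ecal_M^c}\bigr]\bigr| \notag\\
    &\le C\,r\,\tfrac{M}{N}\,\Ebb\bigl[\max_{\ell'}\|\W_{\ell'}\|_2^2\bigr]^{1/2}\,\Pbb(\Ecal_M^c)^{1/2}
    = \Ocal\bigl(\sqrt{N}\bigr)\cdot o(N^{-p-1/2}) = o(N^{-p}), \notag
\end{align}
and identically for $\overline{\partial}^{(\ell)}_{i,j}\chi^r$; as $p$ was arbitrary, the lemma follows (this is essentially the argument of \cite{Hachem2012}).

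The only genuinely technical inputs here are the super-polynomial (indeed exponential) decay of $\Pbb(\Ecal_M^c)$ and the crude polynomial-in-$N$ bound on $\Ebb\bigl|\partial^{(\ell)}_{i,j}\chi^r\bigr|^2$; everything else is bookkeeping. I expect the harder of the two to be the first, namely assembling the right quantitative large-deviation bound for both the largest and the smallest eigenvalue of $\tfrac{1}{N_\ell}\W_\ell\W_\ell^*$ uniformly in $\ell$, although this is entirely classical for the Gaussian model at hand.
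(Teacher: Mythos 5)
Your proposal is correct. The paper does not actually prove Lemma~\ref{lemma:regularization}: it states the lemma and defers to the reference \cite{Hachem2012} (the phrase ``see \cite{Hachem2012}'' is the entire justification), so there is no in-paper proof to compare against. Your argument is the standard one used in that line of work: a good event on which all $L+1$ empirical spectra lie inside the set where $\varphi\equiv 1$, so that $\chi\equiv 1$ and all partial derivatives of $\chi^r$ vanish there (being derivatives of a locally constant function); super-polynomial decay of the complementary probability from Gaussian concentration of extreme Wishart eigenvalues; and a crude polynomial-in-$N$ deterministic bound on $|\partial^{(\ell)}_{i,j}\chi^r|$ obtained via Jacobi's formula, $\|\adj(\varphi(\cdot))\|\le 1$, operator-Lipschitz bounds for $\varphi\in\Ccal^\infty_c$, and the rank-one structure of $\partial^{(\ell)}_{i,j}(\W_{\ell'}\W_{\ell'}^*)$, combined via Cauchy--Schwarz. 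The only implicit hypotheses you invoke that should be flagged are (i) $\varphi$ is chosen valued in $[0,1]$, which is a harmless normalization compatible with the constraints the paper places on $\varphi$, and (ii) $c_\ell<1$ for all $\ell$, which is implicit throughout the paper (it is needed for $\hat\R_\ell^{-1}$ to exist and for $a>\epsilon>0$); without it the factor $\det\varphi(\W_\ell\W_\ell^*/N_\ell)$ would vanish identically. With these understood, the proof is complete and matches the intent of the cited reference.
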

Lemma \ref{lemma:regularization} shows in particular that the presence of the regularization term $\chi$ can be removed from expectations, up to an error term of arbitrary polynomial decay.

In the following, $\Delta$ is a generic notation for a continuous function such that $|\Delta(u)|< \Prm(u)$ for some polynomial $\Prm$ with positive coefficients independent of $M$, and whose value may change from one line to another.

\paragraph{Development of \texorpdfstring{$\Psi'$}{Psi}} Write

\begin{align}
    \Psi'(u) = \irm \sum_{k=1}^K  \sum_{\ell=0}^L
    \Ebb
    \Biggl[
        \Biggl(
            &\beta_{1,k,\ell} \eta_{1,k,\ell} 
            + \beta_{2,k,\ell} \eta_{2,k,\ell} \notag\\
            &+ \Re\Biggl(\overline{\beta_{3,k,\ell}} \eta_{3,k,\ell}\Biggr)
        \Biggr)\xi(u)
    \Biggr].
\end{align}
In the following, we only provide some details for the development of $\Ebb[\eta_{1,k,0} \xi(u)]$, as the other terms can be obtained similarly. Using the resolvent identity, we start by writing
\begin{align}
    \mathbb{E} \left[\eta_{1,k,0} \xi(u)\right] =
    \frac{\sqrt{M}}{\rho_{k,0}} 
    \Ebb\left[\left(\u_k^* \Q_0(\rho_{k,0}) \frac{\W_0\W_0^*}{N_0} \u_k  \chi\right)^\circ \xi(u)\right].
\end{align}
Next, we apply Stein's lemma, Poincaré's inequality and Lemma \ref{lemma:regularization} to obtain
\begin{align}
    &\Ebb\left[\u_k^* \Q_0(\rho_{k,0}) \frac{\W_0\W_0^*}{N_0} \u_k \chi \xi(u)\right]
    =
    \notag\\
    &
    \frac
    {
    \irm u \sigma^2\sum_{i,j} \Ebb\left[[\u_k^*\Q_0(\rho_{k,0})]_{i} [\W_{\ell}^*\u_k]_j \chi
    \overline{\partial}_{i,j}^{(0)}\{\eta\} \xi(u) \right]
    }{N_0(1+\alpha_0(\rho_{k,0}))} \notag
    \\
    &\qquad + \frac{\sigma^2  \Ebb\left[\u_k^* \Q_0(\rho_{k,0}) \u_k \chi \xi(u)\right]}{1+\alpha_0(\rho_{k,0})} + \frac{\Delta(u)}{M},
    \label{eq:dev_eta0}
\end{align}
where $\alpha_{\ell}(z) = \Ebb\left[\frac{\sigma^2}{N_{\ell}}\tr \Q_{\ell}(z) \chi\right]$ for all $\ell=0,\ldots,L$. Using the fact that $\alpha_{0}(\rho_{k,0}) = \sigma^2 c_0 m_0(\rho_{k,0}) + \Ocal(\frac{1}{M^2})$, this gives
\begin{align}
    &\Ebb\left[\eta_{1,k,0}\xi(u)\right] = 
    \notag\\
    &\frac
    {
    \irm u \sigma^2 \sqrt{M}\sum_{i,j} \Ebb\left[[\u_k^*\Q_0(\rho_{k,0})]_{i} [\W_{\ell}^*\u_k]_j \chi
    \overline{\partial}_{i,j}^{(0)}\{\eta\} \xi(u) \right]
    }{N_0 \left(\rho_{k,0}(1+\sigma^2 c_0 m_0(\rho_{k,0})) - \sigma^2\right)}
    \notag\\
    &+ \frac{\Delta(u)}{\sqrt{M}}.
\end{align}
Developing further the derivatives and using Lemma \ref{lemma:regularization}, we have
\begin{align}
    &\sum_{i,j} \Ebb\left[[\u_k^*\Q_0(\rho_{k,0})]_{i} [\W_{\ell}^*\u_k]_j \chi
    \overline{\partial}_{i,j}^{(0)}\{\eta_{1,k',\ell}\} \xi(u) \right] = 
    \notag\\
    & \quad -\sqrt{M}
    \Ebb
    \Biggl[
        \u_k^* \Q_0(\rho_{k,0})\Q_{\ell}(\rho_{k',\ell})\u_{k'}
        \u_{k'}^* \Q_{\ell}(\rho_{k',\ell}) 
        \notag\\
        & \qquad \qquad \qquad \times \frac{\W_{\ell}\W_{\ell}^*}{N_{\ell}} \u_k
        \chi \xi(u)
    \Biggr] + \frac{\Delta(u)}{\sqrt{M}}
    \notag\\
    &\quad=
    \sqrt{M}\theta_{k,\ell} \delta_{k-k'} \Psi(u) + \frac{\Delta(u)}{\sqrt{M}},
\end{align}
with 
\begin{align}
    \kappa_{k,\ell} &= 
    \notag\\
    &\begin{cases}
        \frac{\sigma^2 m_0(\rho_{k,0}) m_0'(\rho_{k,0})}{1+\sigma^2 c_0 m_0(\rho_{k,0})} 
        & \text{ if } \ell=0,
        \\[5pt]
        \frac{\sigma^2 m_{\ell}(\rho_{k,\ell}) m_0(\rho_{k,0}) (1+\sigma^2 c_{0} m_{0}(\rho_{k,0}))}
    {\sigma^2 - \rho_{k, \ell}(1+\sigma^2 c_{0} m_{0}(\rho_{k,0})) (1+\sigma^2 c_{\ell} m_{\ell}(\rho_{k,\ell}))}
    &  \text{ if } \ell \geq 1
    \end{cases}
    \\[10pt]
    &= \begin{cases}
        -\frac{\sigma^2 \gamma_k}{(\gamma_k+\sigma^2 c_0)^2(\gamma_k^2-\sigma^2 c_0)}
        & \text{ if } \ell=0
        \\[5pt]
        - \frac{\sigma^2 \gamma_k}{(\gamma_k+\sigma^2 c_0)(\gamma_k+\sigma^2 c_{\ell})(\gamma_k^2-\sigma^2 c_0)}
    &  \text{ if } \ell \geq 1
    \end{cases},
\end{align}
where the second equality in the expression of $\theta_{k,\ell}$ can be obtained with Lemma \ref{lemma:formulas}. Moreover,
\begin{align}
    &\sum_{i,j} \Ebb\left[[\u_k^*\Q_0(\rho_{k,0})]_{i} [\W_{\ell}^*\u_k]_j \chi
    \overline{\partial}_{i,j}^{(0)}\{\eta_{2,k',\ell}\} \xi(u) \right]
    \notag\\
    & \qquad = -\frac{\sqrt{M}}{N_{\ell}^2}
    \Ebb
    \Biggl[
        \u_k^* \Q_0(\rho_{k,0})\W_{\ell}\tilde{\Q}_{\ell}(\rho_{k',\ell})\s_{k',\ell}
        \notag\\
        &\qquad \qquad \qquad  \quad \times \s_{k',\ell}^* \tilde{\Q}_{\ell}(\rho_{k',\ell}) \W_{\ell}^* \u_k
        \chi \xi(u)
    \Biggr] + \frac{\Delta(u)}{\sqrt{M}}
    \notag\\
    & \qquad =\frac{\Delta(u)}{\sqrt{M}},
\end{align}
and
\begin{align}
    &\sum_{i,j} \Ebb\left[[\u_k^*\Q_0(\rho_{k,0})]_{i} [\W_{\ell}^*\u_k]_j \chi
    \overline{\partial}_{i,j}^{(0)}\{\eta_{3,k',\ell}\} \xi(u) \right]
    \notag\\
    &\qquad = -\frac{\sqrt{M}}{N_{\ell}}
    \Ebb
    \Biggl[
        \u_k^* \Q_0(\rho_{k,0})\Q_{\ell}(\rho_{k',\ell})\W_{\ell}\s_{k',\ell} 
        \notag\\
        & \qquad\qquad\qquad \quad \times \u_{k'}^* \Q_{\ell}(\rho_{k',\ell}) \frac{\W_{\ell}\W_{\ell}^*}{N_{\ell}} \u_k
        \chi \xi(u)
    \Biggr] + \frac{\Delta(u)}{\sqrt{M}}
    \notag\\
    &\qquad  =\frac{\Delta(u)}{\sqrt{M}}.
\end{align}
Finally, using again Lemma \ref{lemma:formulas}, we obtain
\begin{align}
    &\mathbb{E} \left[\eta_{1,k,0} \xi(u)\right] 
    \notag\\
    &=\frac{\irm u \sigma^2 c_0 \sum_{\ell=0}^L \beta_{1,k,\ell} \kappa_{k,\ell}}{\rho_{k,0}(1+\sigma^2 c_0 m_0(\rho_{k,0})) - \sigma^2} \Psi(u) + \frac{\Delta(u)}{\sqrt{M}}
    \notag\\
    &= 
    - \irm u \biggl( \frac{\beta_{1,k,0} \sigma^4 c_0}{(\gamma_k+\sigma^2 c_0)^2(\gamma_k^2-\sigma^2 c_0)}
    \notag\\
    &+ \sum_{\ell=1}^L \frac{\beta_{1,k,\ell}\sigma^4 c_0}{(\gamma_k+\sigma^2 c_0)(\gamma_k+\sigma^2 c_{\ell})(\gamma_k^2-\sigma^2 c_0)} \biggr) \Psi(u) + \frac{\Delta(u)}{\sqrt{M}}.
\end{align}
Using similar computations for the remaining terms $(\mathbb{E} \left[\eta_{i,k,\ell} \xi(u)\right])_{\substack{\ell \geq 1 \\ i = 2,3}}$ in $\Psi'(u)$, we finally obtain the result of Proposition \ref{prop:perturbed_eq_diff}.

\bibliographystyle{IEEEtran}
\bibliography{tsp2023}

\end{document}